\renewcommand{\ALG@beginalgorithmic}{\footnotesize}
\numberwithin{equation}{section} 
\numberwithin{figure}{section} 
\numberwithin{algorithm}{section} 
\numberwithin{table}{section} 
\renewcommand*{\thefootnote}{\fnsymbol{footnote}}
\renewcommand{\thefootnote}{\arabic{footnote}}
\theoremstyle{plain}
\newtheorem{assumption}[theorem]{Assumption}
\newcommand{\bE}{\mathbb{E}}
\newcommand{\bN}{\mathbb{N}}
\newcommand{\bP}{\mathbb{P}}
\newcommand{\bR}{\mathbb{R}}
\newcommand{\bS}{\mathbb{S}}
	\newcommand{\IR}{\bR}  
\newcommand{\cB}{\mathcal{B}}
\newcommand{\cF}{\mathcal{F}}
\newcommand{\cP}{\mathcal{P}}
\newcommand{\dd}{\mathrm{d}}
\definecolor{darkgreen}{rgb}{0,0.35,0}
\newcommand{\1}{\mathbbm{1}}
\begin{document}

\title{Simulation of McKean Vlasov SDEs with super linear growth}
\shorttitle{Simulation of MV-SDEs with super linear growth} 

\author{%
	{\sc
		Gon\c calo dos Reis\thanks{Corresponding author. Email:  G.dosReis@ed.ac.uk}} \\[2pt]
	 University of Edinburgh (UK)  and 
	Centro de Matem\'atica e Aplica\c c$\tilde{\text{o}}$es (PT),\\[6pt]
	{\sc Stefan Engelhardt\thanks{Email: Engelhardt.stefan@uni-jena.de}}\\[2pt]
	Friedrich Schiller Universit\"at, Jena (DE),\\[2pt]
	{\sc and}\\[6pt]
	{\sc
	Greig Smith\thanks{Email:  G.Smith-13@sms.ed.ac.uk}}\\[2pt]
	University of Edinburgh, Maxwell Institute for Mathematical Sciences, UK  
}
\shortauthorlist{G. dos Reis \emph{et al.}} 

\maketitle

\renewcommand*{\thefootnote}{\arabic{footnote}}

\begin{abstract}
{We present two fully probabilistic Euler schemes, one explicit and one implicit, for the simulation of McKean-Vlasov Stochastic Differential Equations (MV-SDEs) with drifts of super-linear growth and random initial condition. 

We provide a pathwise propagation of chaos result and show strong convergence for both schemes on the consequent particle system. The explicit scheme attains the standard $1/2$ rate in stepsize. From a technical point of view, we successfully use stopping times to prove the convergence of the implicit method although we avoid them altogether for the explicit one. The combination of particle interactions and random initial condition makes the proofs technically more involved. 

Numerical tests recover the theoretical convergence rates and illustrate a computational complexity advantage of the explicit over the implicit scheme. Comparative analysis is carried out on a {stylized non Lipschitz MV-SDE and a mean-field model for FitzHugh-Nagumo neurons}. We provide numerical tests illustrating \emph{particle corruption} effect where one single particle diverging can ``corrupt'' the whole particle system. Moreover, the more particles in the system the more likely this divergence is to occur. }
%
{McKean-Vlasov Stochastic Differential Equation, Interacting Particle System, Monte Carlo Simulation, Taming, Implicit and Explicit Schemes, Stochastic Neuron Networks}
\end{abstract}

%

%

\section{Introduction}

The aim of this paper is to develop a numerical scheme for simulating McKean-Vlasov Stochastic Differential Equations (MV-SDEs) with drifts of super-linear growth and Lipschitz diffusion coefficients (with linear growth). MV-SDEs differ from standard SDEs by means of the presence of the law of the solution process in the coefficients:
\begin{equation*}
	\dd X_{t} = b(t,X_{t}, \mu_{t}^{X})\dd t + \sigma(t,X_{t}, \mu_{t}^{X})\dd W_{t},  \quad X_{0} \in L_{0}^{m}( \bR^{d}),
\end{equation*}
where $\mu_{t}^{X}$ denotes the law of the process $X$ at time $t$. Similar to standard SDEs, MV-SDEs have been shown to have a unique strong solution in the super-linear growth in spatial parameter setting, see \cite{dosReisSalkeldTugaut2017} (see also \cite{adams2020large}). Of course, many mean-field models exhibit non-global Lipschitz growth, for example mean-field models for neuronal activity (e.g.~stochastic mean-field FitzHugh-Nagumo models or the network of Hodgkin-Huxley neurons) \cite{BaladronFasoliFaugerasEtAl2012} {with correction note \cite{BossyEtAl2015}}, \cite{Bolley2011}, appearing in biology or physics \cite{dreyer2011phase}, \cite{DreyerFrizGajewskiEtAl2016}. We refer to {the reviews in \cite{BaladronFasoliFaugerasEtAl2012}, \cite{Mehri2020}} for further motivation of the problem. We precise that these works prove that Fokker-Planck-McKean-Vlasov equations, McKean-Vlasov equations and particle systems are well posed.

In general, closed form solutions for such equations are rare. Hence, to fully utilize MV-SDEs as a modelling tool, one needs a reliable way in which to simulate them. It is well known that for SDEs the explicit Euler scheme runs into difficulties in the super-linear growth setting, see \cite{HutzenthalerEtAl2011}, even though the SDE is known to have a unique strong solution. The original solution to this problem was to consider an implicit (or backward) Euler scheme developed in \cite{HighamEtAl2002}. Although implicit schemes allowed one to tackle more general SDEs they are slower especially in higher dimensions. The reason for this boils down to the fact that one is required to solve a fixed point equation at every time-step, which can be computationally expensive. To solve this problem an explicit scheme was then developed in \cite{HutzenthalerEtAl2012}, a so-called \emph{Tamed Euler} scheme. Since then several authors have built on this result and developed algorithms to deal with coefficients that grow super-linearly, see \cite{ChassagneuxEtAl2016}, \cite{Sabanis2013}, \cite{FangGiles2016} for example. The taming techniques we use fall within the universal taming methodologies developed in \cite{Lionnet2018}. Lastly, we refer to \cite{dosReisEtAl2018} for importance sampling Monte Carlo methods for MV-SDEs with super-linear drift.

An extra complication MV-SDEs offer over standard SDEs is the requirement to approximate the law $\mu$ at each time step. Although there are other techniques (see \cite{GobetPagliarani2018}), the most common is the so-called interacting particle system
\begin{align*}
	\dd {X}_{t}^{i,N} 
	= b\big(t,{X}_{t}^{i,N}, \mu^{X,N}_{t} \big) \dd t 
	+ \sigma\big(t,{X}_{t}^{i,N} , \mu^{X,N}_{t} \big) \dd W_{t}^{i},
\end{align*}
where $\mu^{X,N}_{t}(\dd x) := \frac{1}{N} \sum_{j=1}^N \delta_{X_{t}^{j,N}}(\dd x)$ and $\delta_{{X}_{t}^{j,N}}$ is the Dirac measure at point ${X}_{t}^{j,N}$, and $W^{i}, i=1,\dots,N$ are independent Brownian motions. Under Lipschitz type conditions this particle system is known to converge pathwise to the true solution of the MV-SDE (see \cite{Sznitman1991}, \cite{Meleard1996}). However, this convergence (with corresponding rate) in a super-linear growth setting has thus far not been considered in full generality. 

Closer to our work, we highlight: \cite{BudhirajaFan2017} develop an explicit Euler scheme to deal with a specific MV-SDE type equation from a chemotaxis model; convergence is given but under Lipschitz conditions and constant diffusion coefficient. \cite{Malrieu2003} studies an implicit Euler scheme in order to approximate a specific equation and requires a constant diffusion coefficient, symmetry and uniform convexity of the interaction potential. Lastly, in \cite[Section 3.5]{gomes2019mean} the authors are only able to justify their simulation for the Lipschitz case and the results we propose would allow for more general potentials.

\emph{Our contribution.} Firstly, we show that the above particle scheme converges (propagation of chaos) in the super-linear growth case without coercivity/dissipativity. This result is crucial in showing convergence of the numerical scheme to the particle system rather than to the original MV-SDE, with corresponding rate. 

The second contribution is the development and strong convergence of the explicit scheme to the MV-SDE, inspired by the explicit scheme originally developed in \cite{HutzenthalerEtAl2012}, \cite{Sabanis2013} and the universal taming methodologies developed in \cite{Lionnet2018}. A crucial technical point of difference to \cite{Sabanis2013} is while stopping-time arguments in way of localization techniques were put to vigorous use there, the MV-SDE setting and its associated particle systems are (a priori) not amenable to such techniques: we follow a different path in our proofs. We also obtain the classical $1/2$ rate of convergence in the stepsize. Combining this with the propagation of chaos result gives an overall convergence rate for the explicit scheme.

The final contribution is to show strong convergence of an implicit scheme. This turns out to be a challenging problem since results involving implicit schemes rely on stopping time arguments. This causes several issues when generalizing results to the MV-SDE setting and we have to make stronger assumptions on the coefficients in this setting in order for the arguments to continue to hold. On the other hand, we allow for random initial conditions and time dependent coefficients that, to the best of our knowledge, have not been fully treated in the standard SDE setting. We discuss these issues in Remark \ref{rem:No stopping times}. We only focus on strong convergence of this scheme and not the rate, mainly because the explicit scheme is shown to work under more general assumptions, scales better (as our numerical testing shows) and such proof would lead to lengthy statements without substantially enhancing the scope of applications. The question is left for future research with a tentative methodology discussed in Remark \ref{rem:ConvRateImplicit} below.

From a technical point of view, we highlight the successful use of stopping time arguments in combination with McKean-Vlasov equations and associated particle systems to show the convergence of the implicit scheme with diffusion coefficients that are measure dependent. \\

The paper is structured in the following way: In Section \ref{Sec:Preliminaries} we introduce the notation and our tamed particle scheme. In Section \ref{Sec:Main Result}, we state our main result, namely, propagation of chaos and convergence results for the two schemes.
Following that, in Section \ref{Sec:Examples} we provide several numerical examples and highlight the  \emph{particle corruption} phenomena. This analysis implies one cannot hope to build a reliable scheme based on a standard Euler scheme. We further show the increased computational complexity associated with a MV-SDE makes the implicit scheme a less viable option than the explicit (tamed) scheme. Finally, the proofs are given in Section \ref{Sec:Proofs}.

%
%
%
%

\section{Preliminaries}
\label{Sec:Preliminaries}

Throughout the paper we work on a filtered probability space $(\Omega,
\cF, (\cF_{t})_{t \ge 0},  \bP)$ satisfying the usual conditions,
where $\cF_{t}$ is the augmented filtration of a standard multidimensional Brownian
motion $W$. 
We work with $\bR^d$, the $d$-dimensional Euclidean space of real numbers, and for $a=(a_{1},\cdots,a_{d})\in\bR^d$ and $b=(b_{1},\cdots,b_{d})\in\bR^d$ we denote by $|a|^2=\sum_{i=1}^d a_{i}^{2}$ the usual Euclidean distance on $\bR^d$ and by $\langle a,b\rangle=\sum_{i=1}^d a_{i} b_{i}$ the usual scalar product. For matrices $V \in \bR^{k \times \ell}$ we define $|V| = \sup_{u \in \bR^{\ell}, ~ |u| \le 1} |Vu|$. 
  
We consider some finite terminal time $T < \infty$ and use the following notation for spaces, which are standard in the McKean-Vlasov literature (see \cite{Carmona2016}): We define $\bS^{p}$ for $p \ge 1$, as the space of $\bR^{d}$-valued, $\cF_{\cdot}$-adapted processes $Z$, that satisfy $\bE [ \sup_{0 \le t \le T} |Z(t)|^{p}]^{1/p} < \infty$. Similarly, $L_{t}^{p}(\bR^d)$, defines the space of $\bR^{d}$-valued, $\cF_{t}$-measurable random variables $X$, that satisfy $\bE [|X|^{p}]^{1/p} < \infty$.

Given the measurable space $(\bR^{d}, \cB (\bR^{d}))$, we denote by
$\cP(\bR^{d})$ the set of probability measures on this space, and for $p\ge 1$ write $\mu \in \cP_{p}(\bR^{d})$ if $\mu \in \cP(\bR^{d})$ and for
some $x \in \bR^{d}$, $\int_{\bR^{d}} |x-y|^{p} \mu(\dd y) <
\infty$. We then have the following metric on the space $\cP_p(\mathbb
R^d)$ (Wasserstein metric) for $\mu, ~ \nu \in \cP_{p}(\bR^{d})$ (see \cite{Villani2008}, \cite{dosReisSalkeldTugaut2017} among others),
\begin{align*}
W^{(p)}(\mu, \nu) 
=
\inf_{\pi} \Big\{
\Big( \int_{\bR^{d} \times \bR^{d}} |x&-y|^p \pi( \dd x, \dd y) \Big)^{\frac1p}
:
~ \pi \in \cP(\bR^{d} \times \bR^{d})
~ 
\text{with marginals $\mu$ and $\nu$}
\Big\} \, .
\end{align*}
The most common choice in the McKean-Vlasov setting is, $p=2$, and is what we shall use throughout most of this paper. As $W^{(2)}$ is a metric (see \cite{Villani2008} Chapter 6), we have for $\mu_1,\mu_2,\mu_3 \in \cP_2(\mathbb R^d)$
\begin{align*}
	W^{(2)}(\mu_1, \mu_3)
	\le
	W^{(2)}(\mu_1, \mu_2)
	+
	W^{(2)}(\mu_2, \mu_3) \hspace*{0.2mm} .
\end{align*}
As in \cite{Carmona2016}, we introduce the empirical measure  constructed from i.i.d.~samples of some process $X$ by $\mu_{s}^{X,N} := \frac{1}{N} \sum_{j=1}^{N} \delta_{X_{s}^{j}}$. 
Another standard result for the Wasserstein metric for two such empirical measures $\mu_{s}^{X,N}$, $\mu^{Y,N}_s$ is that 
\begin{align*}
	W^{(2)}(\mu_{s}^{X,N}, \mu^{Y,N}_s) 
	\le
	\Big( \frac{1}{N} \sum_{j=1}^{N} |X_{s}^{j} - Y_{s}^{j}|^{2} \Big)^{1/2} \hspace*{0.2mm} .
\end{align*}

\subsection{McKean-Vlasov stochastic differential equations}

Let $W$ be an $l$-dimensional Brownian motion and take the progressively measurable maps $b:[0,T] \times \bR^d \times\cP_2(\bR^d) \to \bR^d$ and $\sigma:[0,T] \times \bR^d \times \cP_2(\bR^d) \to \bR^{d\times l}$.
MV-SDEs are typically written in the form,
\begin{equation}
\label{Eq:General MVSDE}
\dd X_{t} = b(t,X_{t}, \mu_{t}^{X})\dd t + \sigma(t,X_{t}, \mu_{t}^{X})\dd W_{t},  \quad X_{0} \in L_{0}^{p}( \bR^{d}),
\end{equation}
where $\mu_{t}^{X}$ denotes the law of the process $X$ at time $t$, i.e.~$\mu_{t}^{X}=\bP\circ X_t^{-1}$. We make the following assumption on the coefficients throughout.

\begin{assumption}
\label{Ass:Monotone Assumption}
	Assume that $\sigma$ is Lipschitz in the sense that there
        exists $ L_\sigma>0$ such that for all $t \in[0,T]$ and all $x, x'\in \bR^d$ and $\forall \mu, \mu'\in \cP_2(\bR^d)$ we have that
	$$
	 |\sigma(t, x, \mu)-\sigma(t, x', \mu')|\leq L_\sigma (|x-x'| + W^{(2)}(\mu, \mu') ),
	$$and let $b$ satisfy
	\begin{enumerate}
		\item One-sided Lipschitz in $x$ and Lipschitz
                  in law: there exist $ L_{b}, ~ L >0$ such that for all $t
                  \in[0,T]$, all $ x, x'\in \bR^d$ and all $\mu, \mu'\in \cP_2(\bR^d)$ we have that
\begin{align*}
	\langle x-x', b(t, x, \mu)-b(t, x',\mu) \rangle & \leq L_{b}|x-x'|^{2}
	\\
\textrm{and} \quad	|b(t, x, \mu)-b(t, x,\mu')| &\le  LW^{(2)}(\mu, \mu') .
\end{align*}

		\item Locally Lipschitz with polynomial growth in $x$:
                  there exist $L>0$ and $q \in \bN$ with $q>1$ such that for
                  all  $t \in [0,T]$, $\forall \mu \in
                  \cP_{2}(\bR^{d})$ and all $x, ~ x' \in \bR^{d}$ 
		 $$
		 |b(t, x, \mu)-b(t, x',\mu)| \leq L(1+ |x|^{q} + |x'|^{q}) |x-x'| .
		 $$
	\end{enumerate}
\end{assumption} 

\begin{assumption}
	\label{Ass:Holder in Time}
	Assume that $b$ and $\sigma$ are $1/2$-H\"{o}lder continuous in time, uniformly in $x$ and $\mu$.
\end{assumption}

These assumptions allow for a far larger class of models than the standard globally Lipschitz drift assumption. Under the one-sided Lipschitz drift assumption, \cite[Theorem 3.3]{dosReisSalkeldTugaut2017} provides a result for existence and uniqueness. We use a particularised version of this result as we also require Assumption \ref{Ass:Holder in Time} to control the growth in time of the coefficient.
Any globally space-measure Lipschitz drift such as the Kuramoto model, see e.g.~\cite{dosReisEtAl2018}, can of course be simulated using our techniques. We present examples with one-sided Lipschitz drifts that satisfy these assumptions in our numerics Section \ref{Sec:Examples} including the adjusted Ginzburg Landau equation. There are also many other classes of models where one-sided Lipschitz drifts appear such as Kinetic models e.g.~in \cite{gomes2019mean} and Self-stabilizing diffusions \cite{Bolley2011}, \cite{Malrieu2003}; see also \cite{MalrieuTalay2006}.

\begin{theorem}[\cite{dosReisSalkeldTugaut2017}]
\label{Thm:MV Monotone Existence}
	Suppose that $b$ and $\sigma$ satisfy Assumption \ref{Ass:Monotone Assumption} and \ref{Ass:Holder in Time}.
	Further, assume for some $m \ge 2$, $X_{0} \in L_{0}^{m}(\bR^{d})$.
	Then there exists a unique solution $X\in \bS^{m}([0,T])$ to the MV-SDE \eqref{Eq:General MVSDE}. 
	For some positive constant $C$  we have
	\begin{align*}
	\mathbb E \big[ \sup_{t\in[0,T]} |X_{t}|^{m} \big] 
	\leq C \left(\bE[|X_0|^m]
	+ 1
	\right) e^{C T}.
	\end{align*} 
\end{theorem}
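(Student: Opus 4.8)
The plan is to prove well-posedness by a Picard/Banach fixed point argument on the flow of marginal laws, reducing the McKean--Vlasov problem at each stage to a classical Itô SDE with one-sided Lipschitz (``monotone'') drift of polynomial growth. Fix $m\ge 2$ and work on the complete metric space $\cS$ of continuous maps $\mu:[0,T]\to\cP_2(\bR^d)$ with $\sup_{t\le T}\int_{\bR^d}|x|^2\,\mu_t(\dd x)<\infty$, equipped with $d_T(\mu,\nu)=\sup_{t\le T}W^{(2)}(\mu_t,\nu_t)$. First I would, for a frozen $\mu\in\cS$, consider $\dd X_t^\mu=b(t,X_t^\mu,\mu_t)\,\dd t+\sigma(t,X_t^\mu,\mu_t)\,\dd W_t$ with $X_0^\mu=X_0$: the coefficients are locally Lipschitz in $x$ by Assumption \ref{Ass:Monotone Assumption}(2) and the Lipschitz property of $\sigma$, they satisfy the monotonicity condition of Assumption \ref{Ass:Monotone Assumption}(1), and they have at most polynomial (resp.\ linear) growth once one uses Assumption \ref{Ass:Holder in Time} together with measurability to bound $\sup_{t\le T}\big(|b(t,0,\delta_0)|+|\sigma(t,0,\delta_0)|\big)<\infty$ and the Lipschitz-in-law bounds to absorb $W^{(2)}(\mu_t,\delta_0)^2=\int|x|^2\mu_t(\dd x)$. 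Classical theory for monotone SDEs (Khasminskii-type non-explosion plus local existence) then gives a unique strong solution, and an Itô estimate on $|X_t^\mu|^m$ — controlling the drift by $\langle x,b(t,x,\mu_t)\rangle\le L_b|x|^2+|x|\,|b(t,0,\mu_t)|\le C(1+|x|^2+W^{(2)}(\mu_t,\delta_0)^2)$, the diffusion by the linear growth of $\sigma$, the martingale term by Burkholder--Davis--Gundy, and then Gronwall — yields $\bE[\sup_{t\le T}|X_t^\mu|^m]\le C\big(\bE[|X_0|^m]+1+\sup_{t\le T}\int_{\bR^d}|x|^m\mu_t(\dd x)\big)e^{CT}$. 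In particular the map $\Phi:\mu\mapsto(\mathrm{Law}(X_t^\mu))_{t\le T}$ sends $\cS$ into $\cS$, and it maps the closed ball $\{\sup_t\int|x|^m\mu_t(\dd x)\le K\}$ into itself for $K$ large depending on $\bE[|X_0|^m]$ and $T$.

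Next I would prove that some iterate of $\Phi$ is a contraction. For $\mu,\nu\in\cS$ set $\Delta_t=X_t^\mu-X_t^\nu$; applying Itô's formula to $|\Delta_t|^2$ and splitting
\[
\langle\Delta_t,b(t,X_t^\mu,\mu_t)-b(t,X_t^\nu,\nu_t)\rangle=\langle\Delta_t,b(t,X_t^\mu,\mu_t)-b(t,X_t^\nu,\mu_t)\rangle+\langle\Delta_t,b(t,X_t^\nu,\mu_t)-b(t,X_t^\nu,\nu_t)\rangle,
\]
the first term is bounded by the one-sided Lipschitz constant $L_b|\Delta_t|^2$ (note the polynomial growth never enters here, since the measure argument is the same), the second by $|\Delta_t|\,L\,W^{(2)}(\mu_t,\nu_t)$ via the Lipschitz-in-law bound, and the $\sigma$-difference by $2L_\sigma^2(|\Delta_t|^2+W^{(2)}(\mu_t,\nu_t)^2)$; Young's inequality, BDG and Gronwall then give, using $W^{(2)}(\mathrm{Law}(X_s^\mu),\mathrm{Law}(X_s^\nu))^2\le\bE[|\Delta_s|^2]$,
\[
d_t(\Phi\mu,\Phi\nu)^2\le \bE\Big[\sup_{s\le t}|\Delta_s|^2\Big]\le C\int_0^t d_s(\mu,\nu)^2\,\dd s .
\]
Iterating this inequality produces a factor $(Ct)^n/n!$, so $\Phi^n$ is a strict contraction on $\cS$ for $n$ large; Banach's fixed point theorem yields a unique $\mu^\star\in\cS$, and $X:=X^{\mu^\star}$ is the unique strong solution of \eqref{Eq:General MVSDE} in $\bS^m$. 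Feeding $\int|x|^m\mu_t^\star(\dd x)=\bE[|X_t|^m]$ back into the Step-1 moment estimate and applying Gronwall once more closes the stated bound $\bE[\sup_{t\le T}|X_t|^m]\le C(\bE[|X_0|^m]+1)e^{CT}$.

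The main obstacle is the moment/non-explosion analysis under the polynomial growth of $b$: every application of Itô's formula above must be localized along the stopping times $\tau_R=\inf\{t:|X_t^\mu|\ge R\}$, and one must argue — relying on the monotone structure rather than on a growth bound — that the limit $R\to\infty$ is legitimate (Fatou on the left, monotone/dominated convergence on the right using the finite moments of the frozen flow) and, crucially, that the resulting bound is uniform in $\mu$ over the invariant ball of $\cS$, so that $\Phi$ genuinely stabilizes that ball. By comparison, the uniqueness and the contraction step are comparatively soft once the monotone decomposition of the drift difference is in place. One could alternatively establish the fixed point on a short interval $[0,\delta]$ with $C\delta<1$ and patch, using the moment bound to restart, but the $(Ct)^n/n!$ iterate is cleaner here.
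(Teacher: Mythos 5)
The paper does not prove this theorem: it is quoted verbatim from \cite{dosReisSalkeldTugaut2017} (their Theorem 3.3), so there is no in-paper argument to compare against. Your plan --- freeze the measure flow, solve the resulting monotone SDE by classical (Krylov-type) theory, and run Banach's fixed point on $\big(\cS, \sup_{t\le T}W^{(2)}\big)$ with the $(Ct)^n/n!$ iterate --- is precisely the standard route and is essentially what the cited reference does, so the overall strategy is sound. Two small points deserve care. First, the invariant-ball claim for the $m$-th moment does not follow from the estimate you display: $C(\bE[|X_0|^m]+1+K)e^{CT}\le K$ forces $Ce^{CT}<1$, which fails for general $T$. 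Fortunately this step is superfluous --- the contraction lives on all of $\cS$ (finite second moments suffice for $\Phi$ to be well defined), and the $m$-th moment is recovered only after the fixed point is found. Second, and relatedly, when you ``feed $\int|x|^m\mu^\star_t(\dd x)=\bE[|X_t|^m]$ back in,'' you must do so in the pre-Gr\"onwall integral form
\begin{align*}
\bE\big[\sup_{s\le t}|X_s|^m\big]\le C\big(1+\bE[|X_0|^m]\big)+C\int_0^t\Big(\bE\big[\sup_{u\le s}|X_u|^m\big]+\textstyle\int|x|^m\mu_s(\dd x)\Big)\dd s ,
\end{align*}
since substituting into the already-exponentiated bound produces a circular inequality with a constant $Ce^{CT}$ that cannot be absorbed; with the integral form (plus the localization/Fatou step you already flag to justify finiteness), Gr\"onwall closes the stated estimate.
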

If the law $\mu^X$ is known beforehand, then the  MV-SDE reduces to a classical SDE with added time-dependency. Typically this is not the case and usually the MV-SDE is approximated by a particle system. 

\textbf{The interacting particle system approximation.} We approximate \eqref{Eq:General MVSDE} (driven by the Brownian motion $W$), using an $N$-dimensional system of interacting particles. Let $i=1, \dots, N$ and consider $N$ particles $X^{i,N}$ satisfying the SDE with i.i.d.~${X}_{0}^{i,N}=X_{0}^{i}$ (the initial condition is random, but independent of other particles)  
\begin{align}
\label{Eq:MV-SDE Propagation}
\dd {X}_{t}^{i,N} 
= b\Big(t,{X}_{t}^{i,N}, \mu^{X,N}_{t} \Big) \dd t 
+ \sigma\Big(t,{X}_{t}^{i,N} , \mu^{X,N}_{t} \Big) \dd W_{t}^{i}
, 
\end{align}
where $\mu^{X,N}_{t}(\dd x) := \frac{1}{N} \sum_{j=1}^N \delta_{X_{t}^{j,N}}(\dd x)$ and $\delta_{{X}_{t}^{j,N}}$ is the Dirac measure at point ${X}_{t}^{j,N}$, and the independent Brownian motions $W^{i}, i=1,\dots,N$ (also independent of the BM $W$ appearing in \eqref{Eq:General MVSDE}; with a slight abuse of notation to  avoid re-defining the probability space's Filtration).

\textbf{Propagation of chaos.}
In order to show that the particle approximation is of use, one shows a pathwise propagation of chaos result. Although different types exist we are interested in the strong error. Hence a pathwise convergence result is needed and we consider the system of non interacting particles  
\begin{align}
	\label{Eq:Non interacting particles}
	\dd X_{t}^{i} = b(t, X_{t}^{i}, \mu^{X^{i}}_{t}) \dd t + \sigma(t,X_{t}^{i}, \mu^{X^{i}}_{t}) \dd W_{t}^{i}, \quad X_{0}^{i}=X_{0}^{i} \, ,\quad t\in [0,T] \, ,
\end{align}
which are of course just MV-SDEs and since the $X^{i}$s are
independent, $\mu^{X^{i}}_{t}=\mu^{X}_{t}$ for all $
i$. Under global Lipschitz conditions, one can then prove the
following convergence result (see \cite{Sznitman1991}, \cite{Meleard1996})
\begin{align*}
\lim_{N \rightarrow \infty} \sup_{1 \le i \le N}
\bE \big[ 
\sup_{0 \le t \le T} |X_{t}^{i,N} - X_{t}^{i}|^{2}\,
\big] = 0 \, .
\end{align*}
Several propagation of chaos results have been shown over the years under varying conditions, see 
\cite[Theorem 1.10]{Carmona2016} and \cite{Lacker2018} among others.
All SDEs appearing below have initial condition $X_{0}^{i}$ and we work on the interval $[0,T]$.

\textbf{Standard Euler scheme particle system.}
In general one cannot simulate \eqref{Eq:MV-SDE Propagation} directly and therefore turns to a numerical scheme such as Euler. We partition the time interval $[0,T]$ into $M$ steps of size $h:=T/M$, we then define $t_{k}:=kh$ and recursively define the particle system for $k \in \{0, \dots , M-1\}$ as,
\begin{align*}
& \bar{X}_{t_{k+1}}^{i,N,M}
=
 \bar{X}_{t_{k}}^{i,N,M}
 + 
 b\Big(t_{k} , \bar{X}_{t_{k}}^{i,N,M}, \bar{\mu}^{X,N}_{t_{k}} \Big)
 h  
+ \sigma\Big(t_{k}, \bar{X}_{t_{k}}^{i,N,M} , \bar{\mu}^{X,N}_{t_{k}} \Big) \Delta W_{t_{k}}^{i} , 
\end{align*}
where $\bar{\mu}^{X,N}_{t_{k}}(\dd x) := \frac{1}{N} \sum_{j=1}^N \delta_{\bar{X}_{t_{k}}^{j,N,M}}(\dd x)$, $\Delta W_{t_{k}}^{i}:=W_{t_{k+1}}^{i}-W_{t_{k}}^{i}$ and $\bar{X}_{0}^{i,N,M} := X_{0}^{i}$. Under Lipschitz regularity it is well known that this scheme converges, see \cite{BossyTalay1997} or \cite{KohatsuOgawa1997} (here a weak rate of convergence is shown under an additional regularity assumption).

\textbf{Euler particle system for the super-linear case: explicit and implicit.}
However, as discussed in works such as \cite{HutzenthalerEtAl2011}, \cite{HutzenthalerEtAl2012}, \cite{Sabanis2013} one does not have convergence of the Euler scheme when we move away from the global Lipschitz setting. The goal of this paper is to therefore construct a suitable numerical scheme which converges. Inspired by the above works we consider a so-called \emph{tamed} Euler scheme.
With the notation above we consider the following scheme
\begin{align}
	\label{Eq:Tamed MVSDE}
 \bar{X}_{t_{k+1}}^{i,N,M}
	=
	\bar{X}_{t_{k}}^{i,N,M}
	&+ 
	\dfrac{b\Big(t_{k} , \bar{X}_{t_{k}}^{i,N,M}, \bar{\mu}^{X,N}_{t_{k}} \Big) }
	{1+ M^{-\alpha} \Big\vert b\Big(t_{k} , \bar{X}_{t_{k}}^{i,N,M}, \bar{\mu}^{X,N}_{t_{k}} \Big) \Big\vert } h
	+ 
	\sigma\Big(t_{k} , \bar{X}_{t_{k}}^{i,N,M}, \bar{\mu}^{X,N}_{t_{k}} \Big)
	\Delta W_{t_{k}}^{i} ,
\end{align}
where $\bar{\mu}^{X,N}_{t_{k}}(\dd x) = \frac{1}{N} \sum_{j=1}^N \delta_{\bar{X}_{t_{k}}^{j,N,M}}(\dd x)$ and $\alpha \in (0, 1/2]$ with $\bar{X}_{0}^{i,N,M}= X_{0}^{i}$.

Of course, explicit schemes are not the only method one can deploy to solve this problem, we also consider the following implicit scheme  
\begin{align}
\label{Eq:implicitScheme}
& \tilde{X}_{t_{k+1}}^{i,N,M} 
= \tilde{X}_{t_{k}}^{i,N,M} +  b\Big(t_{k},\tilde{X}_{t_{k+1}}^{i,N,M}, \tilde{\mu}^{X,N,M}_{t_{k}} \Big) h 
+ \sigma\Big(t_k,\tilde{X}_{t_{k}}^{i,N,M} , \tilde{\mu}^{X,N,M}_{t_{k}} \Big) \Delta W_{t_{k}}^{i},
\end{align}
where $\tilde{\mu}^{X,N,M}_{t_{k}}(\dd x) := \frac{1}{N} \sum_{j=1}^N \delta_{\tilde{X}_{t_{k}}^{j,N,M}}(\dd x)$ and $\tilde{X}^{i,N,M}_0 = X^i_0$.

\section{Main Results}
\label{Sec:Main Result}
We state our main results and assumption here, the proofs are postponed to Section \ref{Sec:Proofs}. Recall that we want to associate a particle system to the MV-SDE and show its convergence, so-called \emph{propagation of chaos}. We have the following result that holds under weaker assumptions than those in Theorem \ref{Thm:Main Theorem}.
\begin{proposition}[Propagation of chaos]
	\label{Prop:Propagation of Chaos}
	Let the assumption in Theorem \ref{Thm:MV Monotone Existence} hold for $m>4$. Let $X^{i}$ be the solution to \eqref{Eq:Non interacting particles}, and $X^{i,N}$ be the solution to \eqref{Eq:MV-SDE Propagation}.
	
	Then we have the following convergence result.
	\begin{align*}
	\sup_{1 \le i \le N} 
	\bE[\sup_{0 \le t \le T} |X_{t}^{i} - X_{t}^{i,N}|^{2}] 
	\le C
	\begin{cases}
	N^{-1/2} & \text{if } d<4,
	\\
	N^{-1/2} \log(N) \quad & \text{if } d=4,
	\\
	N^{-2/d} & \text{if } d>4.
	\end{cases} 
	\end{align*}
\end{proposition}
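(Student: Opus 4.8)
The plan is to run the classical coupling argument of Sznitman, comparing particle $X^{i,N}$ (from the $N$-system \eqref{Eq:MV-SDE Propagation}) with its decoupled companion $X^i$ (from \eqref{Eq:Non interacting particles}), both driven by the same Brownian motion $W^i$ and started at the same $X_0^i$. First I would write the difference $X_t^{i,N}-X_t^i$ using It\^o's formula applied to $|X_t^{i,N}-X_t^i|^2$. The drift term splits via the one-sided Lipschitz property: $\langle X^{i,N}-X^i, b(t,X^{i,N},\mu^{X,N})-b(t,X^i,\mu^{X^i})\rangle$ is bounded by adding and subtracting $b(t,X^i,\mu^{X,N})$, yielding $L_b|X^{i,N}-X^i|^2$ from the one-sided Lipschitz in space plus a cross term $\langle X^{i,N}-X^i, b(t,X^i,\mu^{X,N})-b(t,X^i,\mu^{X^i})\rangle \le \tfrac12|X^{i,N}-X^i|^2 + \tfrac12 L^2 W^{(2)}(\mu^{X,N},\mu^{X^i})^2$ from the Lipschitz-in-law assumption and Young's inequality. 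The diffusion term contributes, after the Burkholder--Davis--Gundy inequality on the stochastic integral and the Lipschitz property of $\sigma$, a term controlled by $|X^{i,N}-X^i|^2$ and $W^{(2)}(\mu^{X,N},\mu^{X^i})^2$ as well.

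Next I would control $W^{(2)}(\mu^{X,N}_t,\mu^{X^i}_t)^2$. Introduce the empirical measure $\mu^{X,N,\text{emp}}_t := \frac1N\sum_{j=1}^N \delta_{X_t^j}$ of the \emph{decoupled} particles and use the triangle inequality for $W^{(2)}$:
\begin{align*}
W^{(2)}(\mu^{X,N}_t,\mu^{X^i}_t)^2 \le 2\,W^{(2)}(\mu^{X,N}_t,\mu^{X,N,\text{emp}}_t)^2 + 2\,W^{(2)}(\mu^{X,N,\text{emp}}_t,\mu^{X}_t)^2.
\end{align*}
The first term is bounded by $\frac2N\sum_{j=1}^N|X_t^{j,N}-X_t^j|^2$ using the standard empirical-measure estimate recalled in the Preliminaries. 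The second term is the genuinely probabilistic piece: since $X^j$ are i.i.d.\ copies of the MV-SDE solution with common law $\mu^X$, $\mathbb E[W^{(2)}(\mu^{X,N,\text{emp}}_t,\mu^X_t)^2]$ is the mean-square Wasserstein rate of convergence of an empirical measure of i.i.d.\ samples to their law, which by the Fournier--Guillin estimates (the $m$-th moment bound of Theorem \ref{Thm:MV Monotone Existence} with $m>4$ supplies the required moment) gives exactly the three-regime bound $N^{-1/2}$, $N^{-1/2}\log N$, $N^{-2/d}$ according to $d<4$, $d=4$, $d>4$; one then takes a supremum over $t$ via a uniform-in-time moment bound or by bounding $\sup_t$ inside using continuity and the moment estimate.

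Assembling these, setting $\varphi(t) := \sup_{1\le i\le N}\mathbb E[\sup_{0\le s\le t}|X_s^{i,N}-X_s^i|^2]$, and using the symmetry of the system so that $\frac1N\sum_j \mathbb E[\sup_{s\le t}|X^{j,N}_s-X^j_s|^2] \le \varphi(t)$, I obtain an integral inequality of the form $\varphi(t) \le C\!\int_0^t \varphi(s)\,ds + C\,\varepsilon_N$ where $\varepsilon_N$ is the right-hand side of the claimed estimate, and Gr\"onwall's lemma closes the argument on $[0,T]$ with the constant $C$ absorbing $e^{CT}$. The main obstacle is that Theorem \ref{Thm:MV Monotone Existence} only provides moment bounds, not the higher integrability that usually underwrites propagation-of-chaos proofs; in the super-linear drift setting the cross terms involving $b(t,X^i,\mu^{X,N})-b(t,X^i,\mu^{X^i})$ are handled cleanly only because this difference is Lipschitz in the measure argument (growth in $x$ appears only in the space variable, which is treated by the one-sided Lipschitz bound), so I must be careful that every term where super-linear growth could enter is in fact a ``same-point'' term handled by monotonicity, and that the BDG step on the $\sigma$-difference never forces me to bound $\sigma$ at different spatial points with a super-linear constant — which it does not, since $\sigma$ is globally Lipschitz. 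The remaining delicate point is justifying the uniform-in-$t$ passage for the Fournier--Guillin term, which needs the $m>4$ moment assumption precisely to have room for the exponent-$4$ borderline and the supremum.
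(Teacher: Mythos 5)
Your proposal is correct and follows essentially the same route as the paper: Itô on the squared difference, the one-sided-Lipschitz/Lipschitz-in-law splitting of the drift, the $W^{(2)}$ triangle inequality through the empirical measure of the i.i.d.\ decoupled particles, BDG plus Young for the martingale term, exchangeability to absorb the $\frac1N\sum_j$ term, Grönwall, and finally the Fournier--Guillin-type rate (the paper cites \cite[Theorem 5.8]{CarmonaDelarue2017book1}) using the $m>4$ moment bound. The one worry you flag at the end is not actually an obstacle: after Grönwall the empirical-measure error appears only inside $\int_0^T \bE[W^{(2)}(\mu_s,\mu_s^N)^2]\,\dd s$, so no uniform-in-time supremum of that term is needed, only a bound uniform in the fixed time $s$, which the $\sup_t$ moment estimate of Theorem \ref{Thm:MV Monotone Existence} provides.
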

This result shows the particle scheme will converge to the MV-SDE with a given rate. Therefore, to show convergence between our numerical scheme and the MV-SDE, we only need to show that the numerical version of the particle scheme converges to the ``true'' particle scheme. {We point out that under the assumption of a constant diffusion matrix $\sigma$ it is possible to improve the PoC convergence rate above via the results in \cite{Delarue2019}.}


\subsection*{Explicit scheme}
We first introduce the continuous time version of the explicit scheme \eqref{Eq:Tamed MVSDE}. Denote by \linebreak $\kappa(t):= \sup \{ s \in \{0, h, 2h, \ldots, Mh\} : s \leq t \}$ for all $t \in [0,T]$ {and $h=T/M$}, $b_M(t,x,\nu):= \frac{b(t,x,\nu)}{1 + M^{-\alpha} \vert b(t,x,\nu) \vert}$ with $\alpha \in (0,1/2]$ for all $t\in [0,T]$, $x \in \IR^d$, $\nu \in \cP_2 (\IR^d)$ 
\begin{align}
	 X^{i,N,M}_t & = X_{0}^{i} + \int_0^t b_M \left( \kappa(s), X^{i,N,M}_{\kappa(s)}, \mu^{X,N,M}_{\kappa(s)} \right) \dd s 
	\notag
	\\
	&+ \int_0^t \sigma \left( \kappa(s), X^{i,N,M}_{\kappa(s)}, \mu^{X,N,M}_{\kappa(s)} \right) \dd W^i_s,\quad  
	\mu^{X,N,M}_{t} (\dd x) = \frac{1}{N} \sum_{j=1}^N \delta_{X^{j,N,M}_t} (\dd x)
	\label{Eq:Time Continuous Tamed}.
\end{align} 
Note that $\vert b_M(t,x,\nu) \vert \leq \min \left( M^\alpha, \vert b(t,x,\nu) \vert \right)$ and that $\bar{X}^{i,N,M}_{t_{k}} = X^{i,N,M}_{t_{k}}$ for all $k \in \{ 0, 1, \ldots ,M \}$ and hence $X^{i,N,M}$ is a continuous version of $\bar{X}^{i,N,M}$ from \eqref{Eq:Tamed MVSDE}.
We then obtain the following convergence result.

\begin{proposition}
	\label{Prop:Tamed Scheme Convergence}
	Let Assumption \ref{Ass:Monotone Assumption} and \ref{Ass:Holder in Time} hold, further let $X_{0} \in L^{m}(\bR^{d})$ for $m \ge 4(1+q)$ (note $q>1$) and set $\alpha =1/2$.
	Let $X^{i,N}$ be the solution to \eqref{Eq:MV-SDE Propagation}, and $X^{i,N,M}$ be that for \eqref{Eq:Time Continuous Tamed}.
	Then it holds that 
	\begin{align*}
	\sup_{1 \le i \le N}\bE[\sup_{0 \le t \le T} |X_{t}^{i,N}- X_{t}^{i,N,M}|^{2}] \le C h.
	\end{align*}
\end{proposition}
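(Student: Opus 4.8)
The plan is to compare the particle system \eqref{Eq:MV-SDE Propagation} with the continuous-time tamed scheme \eqref{Eq:Time Continuous Tamed} via a standard error-splitting, but carrying out all estimates \emph{without} localization by stopping times — this is the methodological point of difference with \cite{Sabanis2013} flagged in the introduction. First I would establish a uniform moment bound for the tamed scheme: $\sup_{1\le i\le N}\bE[\sup_{t\le T}|X_t^{i,N,M}|^p]\le C$ for $p$ up to $m$, with $C$ independent of $M$ and $N$. For this one applies It\^o's formula to $|X_t^{i,N,M}|^p$, uses that $\langle x, b_M(t,x,\nu)\rangle \le \langle x, b(t,x,\nu)\rangle$ when... more carefully, one writes $b_M = b/(1+M^{-\alpha}|b|)$ and exploits the one-sided Lipschitz bound on $b$ together with the linear-growth-in-$x$ consequence $\langle x, b(t,x,\nu)\rangle \le C(1+|x|^2 + W^{(2)}(\nu,\delta_0)^2)$ (which follows from Assumption \ref{Ass:Monotone Assumption}(1) and Assumption \ref{Ass:Holder in Time} by comparing with $b(t,0,\delta_0)$); the taming only shrinks the drift so the same bound holds for $b_M$, uniformly in $M$. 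The $W^{(2)}(\mu^{X,N,M}_{\kappa(s)},\delta_0)^2\le \frac1N\sum_j |X^{j,N,M}_{\kappa(s)}|^2$ term is absorbed by averaging over $i$ and Gr\"onwall; the diffusion terms are handled by Burkholder–Davis–Gundy plus the linear growth of $\sigma$, and the $L^m$ bound on $X_0$ feeds the initial data. A companion estimate I would record here is the one-step regularity $\bE[\sup_{\kappa(s)\le u\le s}|X_u^{i,N,M}-X_{\kappa(s)}^{i,N,M}|^p]\le C h^{p/2}$, using $|b_M|\le M^{\alpha}=h^{-\alpha}$ with $\alpha=1/2$ so the drift increment is $O(h\cdot h^{-1/2})=O(h^{1/2})$, and BDG for the diffusion increment.

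Next I would set $E_t^{i}:=X_t^{i,N}-X_t^{i,N,M}$ and apply It\^o to $|E_t^{i}|^2$. The drift difference splits as
\[
b(s,X_s^{i,N},\mu_s^{X,N}) - b_M(\kappa(s),X_{\kappa(s)}^{i,N,M},\mu_{\kappa(s)}^{X,N,M})
\]
into three pieces: (i) the \emph{taming error} $b - b_M$ evaluated at the scheme's arguments, which is $\le M^{-\alpha}|b|\cdot|b_M|\le M^{-\alpha}|b(\cdot)|^2 = h^{1/2}|b(\cdot)|^2$ in norm, and $|b(\cdot)|\le C(1+|X^{i,N,M}_{\kappa(s)}|^{q+1} + \cdots)$ by the polynomial growth in Assumption \ref{Ass:Monotone Assumption}(2), so this term is $L^1$-bounded by $C h^{1/2}$ once we have $m\ge 4(q+1)$ moments (note the square of a degree-$q+1$ growth needs $2(q+1)$ moments, and the extra factor of $2$ comes from Cauchy–Schwarz against $|E^i_s|$); (ii) the \emph{time/space discretization error} $b(s,X^{i,N,M}_s,\cdot) - b(\kappa(s),X^{i,N,M}_{\kappa(s)},\cdot)$, controlled by Assumption \ref{Ass:Holder in Time} ($h^{1/2}$) plus the locally-Lipschitz-with-polynomial-growth bound on the spatial increment times the one-step regularity $h^{1/2}$, again needing the high moments; and (iii) the genuine \emph{stability term} $b(s,X_s^{i,N},\mu_s^{X,N}) - b(s,X_s^{i,N,M},\mu_s^{X,N,M})$, which one handles by the one-sided Lipschitz condition in $x$ after pairing with $E_s^{i}$ — crucially $\langle E_s^i, b(s,X_s^{i,N},\mu_s^{X,N}) - b(s,X_s^{i,N,M},\mu_s^{X,N})\rangle \le L_b|E_s^i|^2$ keeps the same measure argument — plus the Lipschitz-in-law bound giving $L|E_s^i|\,W^{(2)}(\mu_s^{X,N},\mu_s^{X,N,M}) \le L|E_s^i|\big(\frac1N\sum_j|E_s^j|^2\big)^{1/2}$. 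The diffusion difference is treated by its global Lipschitz property in $(x,\mu)$ plus the time-Hölder and one-step regularity, all under BDG.

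Finally I would average over $i=1,\dots,N$, set $\Phi_t:=\frac1N\sum_{i=1}^N\bE[\sup_{u\le t}|E^i_u|^2]$ (or $\sup_i\bE[\cdots]$; averaging makes the Wasserstein terms collapse since $\frac1N\sum_i \bE[|E^i_s|\,(\frac1N\sum_j|E^j_s|^2)^{1/2}]\le \Phi_s$), collect the $O(h)$ contributions from (i) and (ii) — here using $2ab\le a^2+b^2$ to turn $h^{1/2}\cdot|E^i_s|$ into $h + $ (stability term) — and the BDG terms via Young's inequality to absorb the running supremum, leaving $\Phi_t \le C\int_0^t \Phi_s\,\ud s + Ch$. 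Gr\"onwall then gives $\Phi_T\le Ch$, which is exactly the claim (the statement is for $\sup_{1\le i\le N}$, but by exchangeability of the particles all $\bE[\sup_u|E^i_u|^2]$ are equal, so the average equals the supremum). I expect the main obstacle to be step (i)–(ii): keeping the polynomial-growth drift increments and taming errors under control \emph{without} a stopping time means one must lean entirely on the a priori moment bounds of both the true particle system (Theorem \ref{Thm:MV Monotone Existence}, applied to \eqref{Eq:MV-SDE Propagation}) and the tamed scheme, and carefully track that $m\ge 4(1+q)$ is exactly what makes the worst term — a product of two degree-$(q+1)$ growths from $|b|^2$ against one more factor from Cauchy–Schwarz, i.e. effectively degree $2(q+1)$ needing $4(q+1)$ for the squares after another Cauchy–Schwarz/BDG split — integrable; getting the moment count tight and ensuring every constant is genuinely independent of $N$ and $M$ is the delicate bookkeeping.
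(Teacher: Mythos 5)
Your proposal is correct and follows essentially the same route as the paper: uniform (in $M,N$) moment bounds for the tamed scheme obtained without stopping times from $\vert b_M\vert\le M^{\alpha}$ and the one-sided Lipschitz structure, the one-step regularity estimate of order $h^{1/2}$, then It\^o on $\vert X^{i,N}_t-X^{i,N,M}_t\vert^2$ with the same drift splitting (stability via one-sided Lipschitz, measure terms collapsed by exchangeability, time/space increments via H\"older and local Lipschitz, taming error $M^{-2\alpha}\vert b\vert^4$ requiring exactly $4(1+q)$ moments), BDG plus Young, and Gr\"onwall. The only points you gloss over, which the paper treats carefully, are the cross term $\langle X^{i,N,M}_s-X^{i,N,M}_{\kappa(s)},b_M(\cdot)\rangle$ in the It\^o expansion for the moment bounds (handled by a conditioning argument, Lemma \ref{Lemma:supEXMsmallerC}) and the bootstrap interleaving Lemmas \ref{Lemma:XtMinusKappaLeqCM} and \ref{Lemma:EsupXMsmallerC} to reach moments of order $m$.
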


This then leads to our main explicit scheme convergence result.
\begin{theorem}[Strong convergence of explicit scheme]
	\label{Thm:Main Theorem}
	Let Assumption \ref{Ass:Monotone Assumption} and \ref{Ass:Holder in Time} hold, further let \linebreak $X_{0} \in L^{m}(\bR^{d})$ for $m \ge 4(1+q)$ (note $q>1$) and set $\alpha =1/2$. 
	 Let $X^{i}$ be the solution to \eqref{Eq:Non interacting particles}, and $X^{i,N,M}$ be that for \eqref{Eq:Time Continuous Tamed}.
	
	Then we obtain the following convergence result
	\begin{align*}
	\sup_{1 \le i \le N}  \bE \big[ \sup_{0 \le t \le T} |X_{t}^{i} - X_{t}^{i,N,M}|^{2}
	\big] 
	\le
	 C
	 \begin{cases}
	 N^{-1/2} +h & \text{if } d<4,
	 \\
	 N^{-1/2} \log(N) + h \quad & \text{if } d=4,
	 \\
	 N^{-2/d} + h & \text{if } d>4.
	 \end{cases} 
	\end{align*}
\end{theorem}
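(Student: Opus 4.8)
The plan is to obtain Theorem~\ref{Thm:Main Theorem} by a triangle-inequality decomposition combining the two preceding results. Write
\begin{align*}
	\bE\big[\sup_{0\le t\le T}|X_t^i - X_t^{i,N,M}|^2\big]
	\le 2\,\bE\big[\sup_{0\le t\le T}|X_t^i - X_t^{i,N}|^2\big]
	+ 2\,\bE\big[\sup_{0\le t\le T}|X_t^{i,N} - X_t^{i,N,M}|^2\big],
\end{align*}
where $X^{i,N}$ is the particle system \eqref{Eq:MV-SDE Propagation} driven by the \emph{same} Brownian motions $W^i$ as the scheme \eqref{Eq:Time Continuous Tamed}, and $X^i$ is the associated non-interacting MV-SDE \eqref{Eq:Non interacting particles} (again with the same $W^i$ and $X_0^i$). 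Taking $\sup_{1\le i\le N}$ on both sides, the first term is bounded by Proposition~\ref{Prop:Propagation of Chaos} and the second by Proposition~\ref{Prop:Tamed Scheme Convergence}. One only needs to check the hypotheses are compatible: Proposition~\ref{Prop:Tamed Scheme Convergence} requires $X_0\in L^m$ with $m\ge 4(1+q)$, and since $q>1$ we have $m>4$, so the moment hypothesis $m>4$ of Proposition~\ref{Prop:Propagation of Chaos} is automatically satisfied; the remaining assumptions (Assumption~\ref{Ass:Monotone Assumption}, Assumption~\ref{Ass:Holder in Time}, $\alpha=1/2$) are exactly those in Proposition~\ref{Prop:Tamed Scheme Convergence}. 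Adding the two bounds and absorbing the factor $2$ and all constants into a single $C$ gives precisely the three-case estimate in the statement, with the $h$ term coming from Proposition~\ref{Prop:Tamed Scheme Convergence} and the $N$-dependent term from Proposition~\ref{Prop:Propagation of Chaos}.

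The one genuinely substantive point to verify is that the two propositions are being applied on the \emph{same} probabilistic realization, so that the triangle inequality is legitimate pathwise and the expectations line up. This is a matter of bookkeeping: couple everything through the common driving noises $(W^i)_{i\le N}$ and common initial data $(X_0^i)_{i\le N}$, exactly as the particle system and the schemes are defined in Section~\ref{Sec:Preliminaries}. With that coupling fixed, $|X_t^i - X_t^{i,N,M}|\le |X_t^i - X_t^{i,N}| + |X_t^{i,N} - X_t^{i,N,M}|$ holds $\omega$-wise, and then $\sup_t$, $(\cdot)^2$, $\bE$ and $\sup_i$ are applied in turn using $(a+b)^2\le 2a^2+2b^2$, subadditivity of $\sup$, and monotonicity of $\bE$.

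Thus no new analysis is required and there is no real obstacle at this stage; the heavy lifting has already been done in Proposition~\ref{Prop:Propagation of Chaos} and Proposition~\ref{Prop:Tamed Scheme Convergence}. If I had to point to the most delicate ingredient feeding into this theorem, it is the proof of Proposition~\ref{Prop:Tamed Scheme Convergence} (the strong $1/2$-rate for the tamed scheme against the particle system), which must handle the super-linear drift together with the measure dependence \emph{without} the localization/stopping-time arguments available in the plain-SDE literature; but that is established earlier and may be invoked. The only care needed here is the moment-exponent chase and the compatibility of the $d<4$, $d=4$, $d>4$ cases, which are inherited verbatim from Proposition~\ref{Prop:Propagation of Chaos}.
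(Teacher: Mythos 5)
Your proposal is correct and is exactly the paper's argument: the authors prove Theorem \ref{Thm:Main Theorem} in one line as "a consequence of Propositions \ref{Prop:Propagation of Chaos} and \ref{Prop:Tamed Scheme Convergence}", i.e.\ the same triangle-inequality decomposition through the coupled particle system $X^{i,N}$. Your additional checks (the coupling via common $W^i$ and $X_0^i$, and that $m\ge 4(1+q)>4$ makes the hypotheses compatible) are correct and merely make explicit what the paper leaves implicit.
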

\begin{proof}[Proof of Theorem \ref{Thm:Main Theorem}]
	Theorem \ref{Thm:Main Theorem} is a consequence of Propositions \ref{Prop:Propagation of Chaos} and \ref{Prop:Tamed Scheme Convergence}.
\end{proof}


\begin{remark}[Issues using stopping times]
	\label{rem:No stopping times}
	The technique of using the stopping time \linebreak $\tau^i_R := \inf \{ t \geq 0 : \vert X^{i,N,M}_t \vert \geq R \}$ to control the particles is suboptimal here and several problems appear by introducing them (this is in stark opposition to \cite{Sabanis2013}). Namely, one can only consider stopping times that stop one particle since otherwise the convergence speed would decrease with a higher number of particles. However, applying a stopping time to a single particle does not allow us to fully bound the coefficients and moreover destroys the result of all particles being identically distributed. 
	
The stopping time arguments used for the implicit scheme below require stronger assumptions in order to make the theory hold.
\end{remark}

\begin{remark}[Controlling $W^{2}\big(\mu_{t}^{X}, \mu_{t}^{X, M, N}\big)$: Moments, functional CLTs, concentration inequalities and weak error]
Error estimates in $L^p$ norm do not provide optimal information on stochastic numerical methods since these methods are aimed to approximate probability distributions rather than particular paths. For McKean-Vlasov equations and their applications to
the numerical analysis of PDES and to stochastic modelling in target fields, the challenge consists in approximating the marginal distributions of the mean-field limit. 

The question is what can one say regarding $W^{2}\big(\mu_{t}^{X}, \mu_{t}^{X, M, N}\big)$?

Let us start by observing that $\mu_{t}^{X, M, N}$ is a random measure, hence four answers to the question are in order. (I) Firstly, the strong order convergence of Theorem \ref{Thm:Main Theorem} provides a direct control for $\bE[(\mu_{t}^{X, M, N})^2]$, but these are just less informative $L^p$ moments. (II) Asymptotic functional CLT type results are a second line of approach. We point to the very recent \cite{jourdain2020central}. Although not sufficiently general to capture the setting we present, since they require Lipschitz \& differentiability conditions (in space and measure), they prove a functional CLT that works crucially when the samples are dependent.
Their result is brilliant and asymptotic allowing one to design asymptotic confidence
intervals to control $W^{2}\big(\mu_{t}^{X}, \mu_{t}^{X, M, N}\big)$. The (III) third answer are the non-asymptotic concentration inequalities. \cite{Frikha2012concentration} address concentration inequalities for the general Euler scheme under uniform Lipschitz conditions and with space-dependent diffusion coefficient. The literature is incredibly rich for the constant diffusion-coefficient case, be it for classical SDEs or in the MV-SDE case, always with a convexity at infinity condition (or then strict convex potentials). On MV-SDEs we refer to \cite{MalrieuTalay2006} where either boundedness of the drift is used or all involved drift maps are strictly convex such that the Bakry-Emery machinery transfers. Concentration inequalities able to cover the case we present in this manuscript are still an open question -- mainly, current taming methodologies appearing in the literature although able to control the superlinear growth in $b$ entail the high-cost of losing the strict dissipativity of the one-sided Lipschitz condition (i.e.~when $L_b<0$ in Assumption \ref{Ass:Monotone Assumption}), this is evident in \cite{ChassagneuxEtAl2016}. {The last answer (IV) are weak convergence rate results which is currently an open question for this type of schemes in general. }
\end{remark}


\subsection*{The implicit scheme}

As alternative to the explicit scheme we now discuss the implicit or backward Euler scheme. That being said, the implicit scheme has some well documented disadvantages, namely it is expensive compared to its explicit counterpart, we discuss this issue further in Section \ref{Sec:Examples}. One can consult, \cite{MaoSzpruch2013} for example on the implicit scheme (and extensions) for standard SDEs.

Standard implicit scheme convergence results rely on the so called monotone growth condition, we therefore proceed with the following assumption.
\begin{assumption}
	\label{Ass:Extra Implicit Bounds}
	\begin{enumerate}[(H1).]
		\item 	\label{Ass:Bounded Measure}
	There exists a constant $C$ such that, for all $\mu \in \cP_{2}(\bR^{d})$,
	\begin{align*}
	|b(0,0,\mu)|
	+
	|\sigma(0,0,\mu)|
	\le C \, .
	\end{align*}
	
	\item \label{Ass:Sigma Indepedent of Measure}
	The drift and diffusion coefficient satisfy the stronger bound in measure condition, for all \linebreak$t\in[0,T]$, all $ x\in \bR^d$ and all $\mu, \mu'\in \cP_2(\bR^d)$
	\begin{align*}
	|b(t, x, \mu)-b(t, x,\mu')| + |\sigma(t, x, \mu)-\sigma(t, x,\mu')|  &\le  LW^{(1)}(\mu, \mu') \, ,
	\end{align*}
	where $W^{(1)}(\cdot, \cdot)$ is the Wasserstein-1 distance.
	\end{enumerate}
\end{assumption}
Although the main convergence theorem requires both H\ref{Ass:Bounded Measure} and H\ref{Ass:Sigma Indepedent of Measure}, we only use H\ref{Ass:Sigma Indepedent of Measure} at the end of the proof of convergence. We present our auxiliary results requiring only H\ref{Ass:Bounded Measure} as we believe them to be of general independent interest.

We now state the strong convergence of the implicit scheme \eqref{Eq:implicitScheme} to \eqref{Eq:MV-SDE Propagation}.

\begin{proposition}
	\label{Prop:Strong Implicit Scheme Convergence}
	Let Assumption \ref{Ass:Monotone Assumption}, \ref{Ass:Holder in Time} and \ref{Ass:Extra Implicit Bounds} hold. Fix a timestep $h^{*}< 1/\max(L_{b}, 2 \beta)$ and assume that $X_{0} \in L^{4(q+1)}( \bR^{d})$. Let $X^{i,N}$ be the solution to \eqref{Eq:MV-SDE Propagation}, and $\tilde{X}^{i,N,M}$ be that for \eqref{Eq:implicitScheme}.
	Then, for any $h$ and $M$ with $T = hM$ and $s \in [1,2)$
	\begin{align*}
	\sup_{1 \le i \le N}
	\lim_{h \rightarrow 0}
	\bE[ |X_{T}^{i,N} - \tilde{X}_{T}^{i,N,M}|^{s} ]
	=0 \, .
	\end{align*}
\end{proposition}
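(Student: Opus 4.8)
The plan is to follow the classical Higham--Mao--Stuart strategy for backward-Euler schemes, adapted to the interacting particle system by means of a localising stopping time and the exchangeability of the particles.

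\textbf{Step 1: well-posedness and uniform moments.} For $h<h^{*}$ the one-sided Lipschitz bound in Assumption~\ref{Ass:Monotone Assumption}(1) makes $y\mapsto y-h\,b(t_k,y,\nu)$ strongly monotone and of at most polynomial growth, hence a bijection of $\bR^{d}$; so \eqref{Eq:implicitScheme} defines $\tilde X_{t_{k+1}}^{i,N,M}$ uniquely as an $\cF_{t_{k+1}}$-measurable random variable. Using H\ref{Ass:Bounded Measure} together with the one-sided Lipschitz and Lipschitz-in-law bounds and Assumption~\ref{Ass:Holder in Time}, one obtains the $\mu$-uniform estimates $\langle x,b(t,x,\mu)\rangle\le C(1+|x|^{2})$ and $|\sigma(t,x,\mu)|\le C(1+|x|)$. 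Rearranging the scheme as $\tilde X_{t_{k+1}}^{i,N,M}-h\,b(t_k,\tilde X_{t_{k+1}}^{i,N,M},\tilde\mu^{X,N,M}_{t_k})=\tilde X_{t_k}^{i,N,M}+\sigma(t_k,\tilde X_{t_k}^{i,N,M},\tilde\mu^{X,N,M}_{t_k})\Delta W_{t_k}^{i}$, squaring, absorbing the implicit term by the monotone bound, conditioning on $\cF_{t_k}$ and iterating a discrete Gronwall inequality yields $\sup_{0\le k\le M}\bE[|\tilde X_{t_k}^{i,N,M}|^{4(q+1)}]\le C$ uniformly in $h$ and $N$ (the top moment needs $X_{0}\in L^{4(q+1)}$). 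The same bound is recorded for a continuous-time interpolation $\bar{\tilde X}^{i,N,M}$, and $\bE[\sup_{t\le T}|X_t^{i,N}|^{4(q+1)}]\le C$ holds for the particle system \eqref{Eq:MV-SDE Propagation} by the same reasoning as Theorem~\ref{Thm:MV Monotone Existence}.

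\textbf{Step 2: localised error estimate (the main obstacle).} Set $e_t^{i}:=X_t^{i,N}-\bar{\tilde X}_t^{i,N,M}$ and introduce a stopping time of the form $\tau_R:=\inf\{t\ge0:|X_t^{i,N}|\vee|\bar{\tilde X}_t^{i,N,M}|\ge R\}$; the precise choice --- in particular whether it stops one particle or all of them --- needs care (cf.\ Remark~\ref{rem:No stopping times}), and by Step~1 and Markov's inequality $\bP(\tau_R\le T)\le C/R^{2}$. On $\{s<\tau_R\}$ the spatial arguments of particle $i$ are bounded by $R$, so I would expand $\bE[\sup_{u\le t}|e_{u\wedge\tau_R}^{i}|^{2}]$ and decompose the drift difference $b(s,X_s^{i,N},\mu_s^{X,N})-b(\kappa(s),\tilde X_{\kappa(s)+h}^{i,N,M},\tilde\mu^{X,N,M}_{\kappa(s)})$ into: (i) a one-sided-Lipschitz piece evaluated, as is standard for backward Euler, at the implicit point $\tilde X_{\kappa(s)+h}^{i,N,M}$, so that after pairing with $e^{i}$ it is controlled by $L_b|e_{\kappa(s)+h}^{i}|^{2}$; (ii) a measure piece bounded via H\ref{Ass:Sigma Indepedent of Measure} by $L\,W^{(1)}(\mu_s^{X,N},\tilde\mu^{X,N,M}_{\kappa(s)})\le \tfrac{L}{N}\sum_{j}|X_s^{j,N}-\bar{\tilde X}_{\kappa(s)}^{j,N,M}|$; (iii) a time-regularity piece (Assumption~\ref{Ass:Holder in Time}) of order $h^{1/2}$; (iv) a local-Lipschitz piece controlling the one-step displacements $|X_s^{i,N}-X_{\kappa(s)}^{i,N}|$ and the corresponding scheme displacement, which are $O(h^{1/2})$ in $L^{2}$ on $\{s<\tau_R\}$ with a constant polynomial in $R$. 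The diffusion difference is treated with the Burkholder--Davis--Gundy inequality and the global Lipschitz property of $\sigma$ (again invoking H\ref{Ass:Sigma Indepedent of Measure} for the measure part). Taking expectations, using exchangeability to reduce $\tfrac1N\sum_{j}\bE[|X_\cdot^{j,N}-\bar{\tilde X}_\cdot^{j,N,M}|^{2}]$ to the single-particle error, and applying Gronwall gives $\bE[\sup_{t\le T}|e_{t\wedge\tau_R}^{i}|^{2}]\le C(R)\,h$, with $C(R)$ independent of $h$ (it grows only polynomially in $R$, the Gronwall constant depending only on $L_b$, $L_\sigma$, $L$).

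\textbf{Step 3: conclusion.} For $s\in[1,2)$ write
\[
\bE[|X_T^{i,N}-\tilde X_T^{i,N,M}|^{s}]
= \bE[|e_T^{i}|^{s}\1_{\{\tau_R>T\}}] + \bE[|e_T^{i}|^{s}\1_{\{\tau_R\le T\}}].
\]
On $\{\tau_R>T\}$ we have $e_T^{i}=e_{T\wedge\tau_R}^{i}$, so by Jensen the first term is at most $\bE[|e_{T\wedge\tau_R}^{i}|^{2}]^{s/2}\le (C(R)h)^{s/2}$. For the second term, Hölder's inequality with exponents $2/s$ and $2/(2-s)$, together with the uniform second moments from Step~1 and Theorem~\ref{Thm:MV Monotone Existence}, gives $\bE[|e_T^{i}|^{s}\1_{\{\tau_R\le T\}}]\le \bE[|e_T^{i}|^{2}]^{s/2}\,\bP(\tau_R\le T)^{1-s/2}\le C/R^{2-s}$. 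Hence $\limsup_{h\to0}\bE[|X_T^{i,N}-\tilde X_T^{i,N,M}|^{s}]\le C/R^{2-s}$ for every $R$; letting $R\to\infty$ gives the claim, and the supremum over $i\in\{1,\dots,N\}$ is immediate since all constants are $i$-independent. The restriction $s<2$ is exactly what makes the Hölder exponent $1-s/2$ strictly positive, and it forces the order of limits ($h\to0$ first, then $R\to\infty$). The genuine difficulty, and the reason the strengthened hypotheses are needed, is in Step~2: reconciling the stopping-time localisation with the measure coupling --- stopping a single particle does not bound the coefficients and destroys the identical distribution of the particles, while stopping all of them spoils the $N$-dependence --- so that H\ref{Ass:Bounded Measure} makes the monotone bounds uniform in the measure argument and the Wasserstein-$1$ bound H\ref{Ass:Sigma Indepedent of Measure} allows the measure error to be absorbed into the individual particle error after taking expectations and using exchangeability.
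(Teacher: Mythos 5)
Your overall strategy (backward--Euler well-posedness, localisation by a stopping time, a localised $L^2$ error estimate, then a Young/H\"older splitting for $s\in[1,2)$) is the same as the paper's, but the two steps you flag as ``needing care'' are exactly where your argument does not close, and the paper's proof is organised around resolving them.

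First, in Step 2 you claim the localised estimate $\bE[\sup_{t\le T}|e^{i}_{t\wedge\tau_R}|^{2}]\le C(R)h$ with no remainder. This cannot be obtained the way you describe. Your $\tau_R$ stops only particle $i$, but the measure term $\tfrac{1}{N}\sum_{j}|X_s^{j,N}-\bar{\tilde X}^{j,N,M}_{\kappa(s)}|$ involves all particles $j$, which are not localised on $\{s<\tau_R\}$; and exchangeability does not reduce $\bE[|X^{j,N}_\cdot-\bar{\tilde X}^{j,N,M}_\cdot|^{2}\1_{\{s<\tau_R^{i}\}}]$ to the stopped single-particle error, because the indicator carries index $i$ while the difference carries index $j$. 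The paper's Lemma \ref{Lem:Strong ForwardBackward Particle Conv} splits the empirical sum according to $\{s\le\theta_m^{j}\}$ versus $\{s>\theta_m^{j}\}$: the localised part enters the Gr\"onwall loop, but the unlocalised part survives as an irreducible extra term $C\,\bE[\1_{\{T>\theta_m^{i}\}}]^{1/2}$, and controlling it requires the $1\wedge W^{(1)}$ structure coming from H\ref{Ass:Bounded Measure} together with $\min(a,b)\le\sqrt{a}\sqrt{b}$, so that only \emph{first} moments of the unstopped differences appear (these are then handled by Propositions \ref{Prop:Particle System Bounds} and \ref{Prop:Bounded Implicit Second Moment}). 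Without this the localised estimate is false as stated, and with it your Step 3 must absorb an additional $\bP(\theta_m\le T)^{s/4}$-type term.

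Second, your bound $\bP(\tau_R\le T)\le C/R^{2}$ uniformly in $h$ presupposes a uniform-in-$h$ bound on $\bE[\sup_{t\le T}|\bar{\tilde X}^{i,N,M}_t|^{2}]$ (and your Step 1 even asserts uniform $4(q+1)$-moments for the implicit scheme). Neither is established by the discrete Gr\"onwall sketch you give: for the implicit scheme the paper only obtains higher moments with $m$-dependent constants up to the stopping time $\lambda_m^{i}$ (Lemma \ref{Lem:Moment Bound for Stopped X}), and the unlocalised bound of Proposition \ref{Prop:Bounded Implicit Second Moment} is pointwise in $k$, not a sup-moment. Consequently the exit probability of the scheme is controlled only after restricting to $h\le h_0^{*}(m)$ (Lemma \ref{Lem:Bounded Stopping Time}), which forces the quantifier order ``choose $m$ large, then $h$ small'' rather than your ``$h\to0$ first for fixed $R$, then $R\to\infty$''. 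The conclusion survives, but your justification of the tail probability is a genuine gap, not a routine Markov inequality.
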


\begin{theorem}[Strong convergence of implicit scheme]
	\label{Thm:Strong Convergence of Implicit}
	Let the assumption in Proposition \ref{Prop:Strong Implicit Scheme Convergence} hold and let $X^{i}$ be the solution to \eqref{Eq:Non interacting particles}, and $\tilde{X}^{i,N,M}$ be that for \eqref{Eq:implicitScheme}. Then, for any $h$ and $M$ with $T = hM$ and $s \in [1,2)$ one has
	\begin{align*}
	\lim_{N \rightarrow \infty}
	\sup_{1 \le i \le N}
	\lim_{h \rightarrow 0}
	\bE[ |X_{T}^{i} - \tilde{X}_{T}^{i,N,M}|^{s} ]
	=0 \, .
	\end{align*}
\end{theorem}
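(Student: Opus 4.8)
The plan is to obtain Theorem~\ref{Thm:Strong Convergence of Implicit} as an immediate corollary of Proposition~\ref{Prop:Propagation of Chaos} and Proposition~\ref{Prop:Strong Implicit Scheme Convergence}, in exactly the way Theorem~\ref{Thm:Main Theorem} follows from Propositions~\ref{Prop:Propagation of Chaos} and~\ref{Prop:Tamed Scheme Convergence}; the only points requiring care are the passage from an $L^{s}$ estimate to the available $L^{2}$ propagation-of-chaos estimate and the bookkeeping of the iterated limits.

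First I would fix $i$ and $N$ and split, for $s\in[1,2)$, using $|a+b|^{s}\le 2^{s-1}(|a|^{s}+|b|^{s})$,
\[
\bE\big[|X_{T}^{i}-\tilde X_{T}^{i,N,M}|^{s}\big]
\le 2^{s-1}\bE\big[|X_{T}^{i}-X_{T}^{i,N}|^{s}\big]
+2^{s-1}\bE\big[|X_{T}^{i,N}-\tilde X_{T}^{i,N,M}|^{s}\big].
\]
This is legitimate because $X^{i}$, $X^{i,N}$ and $\tilde X^{i,N,M}$ all live on the same probability space and are driven by the \emph{same} Brownian motion $W^{i}$, so the bound is pathwise.

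For the first term I would check that the hypotheses of Proposition~\ref{Prop:Propagation of Chaos} hold: since $q>1$ is an integer we have $m=4(q+1)\ge 12>4$, so under Assumptions~\ref{Ass:Monotone Assumption} and~\ref{Ass:Holder in Time} the proposition applies and gives $\sup_{1\le i\le N}\bE[\sup_{0\le t\le T}|X_{t}^{i}-X_{t}^{i,N}|^{2}]\le C\varepsilon_{N}$ with $\varepsilon_{N}\to 0$ (the explicit $\varepsilon_{N}$ depending on $d$). As $s/2<1$, Jensen's inequality yields $\bE[|X_{T}^{i}-X_{T}^{i,N}|^{s}]\le\big(\bE[|X_{T}^{i}-X_{T}^{i,N}|^{2}]\big)^{s/2}\le C\varepsilon_{N}^{s/2}$, uniformly in $i$ and --- crucially --- independently of $h$. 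For the second term, Proposition~\ref{Prop:Strong Implicit Scheme Convergence} directly gives $\sup_{1\le i\le N}\lim_{h\to 0}\bE[|X_{T}^{i,N}-\tilde X_{T}^{i,N,M}|^{s}]=0$ for each fixed $N$.

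Finally I would assemble the pieces: taking $\limsup_{h\to 0}$ in the displayed inequality, the first term is $h$-independent and passes through unchanged, while the second vanishes by Proposition~\ref{Prop:Strong Implicit Scheme Convergence}, so
\[
\sup_{1\le i\le N}\ \limsup_{h\to 0}\ \bE\big[|X_{T}^{i}-\tilde X_{T}^{i,N,M}|^{s}\big]\le 2^{s-1}C\varepsilon_{N}^{s/2};
\]
letting $N\to\infty$ the right-hand side tends to $0$, which is the claim (and, combined with Proposition~\ref{Prop:Strong Implicit Scheme Convergence}, the $\limsup_{h\to 0}$ is in fact a genuine limit). I do not expect a real obstacle at this level: the substantive work is already contained in Proposition~\ref{Prop:Strong Implicit Scheme Convergence} --- the stopping-time localisation for the measure-dependent implicit scheme, which is where H\ref{Ass:Bounded Measure}--H\ref{Ass:Sigma Indepedent of Measure} enter --- and in Proposition~\ref{Prop:Propagation of Chaos}. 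The only things to be mildly careful about are (i) the three processes share the driving noise, so the triangle inequality is pathwise; (ii) the Jensen step works precisely because $s<2$; and (iii) one must never interchange $\lim_{h\to 0}$ with $\sup_{i}$ or $\lim_{N\to\infty}$ carelessly --- but the argument above only ever pushes $\limsup_{h\to 0}$ through a quantity independent of $h$, so no such interchange is required.
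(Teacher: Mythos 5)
Your proposal is correct and follows essentially the same route as the paper: the paper's proof is the one‑line observation that Theorem~\ref{Thm:Strong Convergence of Implicit} follows by combining Propositions~\ref{Prop:Propagation of Chaos} and~\ref{Prop:Strong Implicit Scheme Convergence}, and your triangle‑inequality decomposition, the Jensen step $\bE[\,|X_T^i-X_T^{i,N}|^{s}\,]\le\big(\bE[\,|X_T^i-X_T^{i,N}|^{2}\,]\big)^{s/2}$ (valid precisely because $s<2$), and the bookkeeping of the iterated limits are exactly the details the authors leave implicit.
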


\begin{proof}
	The proof of this result follows by combing Proposition \ref{Prop:Propagation of Chaos} and \ref{Prop:Strong Implicit Scheme Convergence} and noting that the assertion in Proposition \ref{Prop:Strong Implicit Scheme Convergence} is independent of $N$.
\end{proof}

\begin{remark}[On the convergence rate of the implicit scheme]
\label{rem:ConvRateImplicit}
Theorem \ref{Thm:Strong Convergence of Implicit} shows the convergence of the implicit scheme but without establishing a rate. Methodologically speaking, the approach proposed in \cite{HighamEtAl2002} seems applicable here, where the convergence rate of the implicit scheme would be shown by defining an intermediate process and considering the convergence of the implicit scheme to the intermediate process and then that of the intermediate process to the original equation, see \cite{HighamEtAl2002}. We suspect that such proof is not straightforward with several extra constraints appearing due to the presence of the law. As it stands, the convergence of our implicit scheme requires stronger assumptions (see Assumption \ref{Ass:Extra Implicit Bounds}) than the explicit one so we leave establishing the rate for future. Our numerical experiments hint that the convergence rate should be the same as the explicit, which is consistent with the case of standard SDEs.	
\end{remark}

\section{Numerical Testing and Examples}
\label{Sec:Examples}
We illustrate our results with numerical examples. We highlight the issues of using the standard Euler scheme in this setting and also compare the computational time and complexity of the explicit and implicit scheme. We juxtapose our findings to those in \cite{BaladronFasoliFaugerasEtAl2012} {(see correction note \cite{BossyEtAl2015})}.

\subsection{Particle corruption}
It is well known that the Euler scheme fails (diverges) when one moves outside the realm of linear growing coefficients, see \cite{HutzenthalerEtAl2011}. We claim that this divergence is worse in the setting of MV-SDEs and associated particle system due to an effect we refer to as \emph{particle corruption}. 

The basic idea is that one particle becomes influential on all other particles, thus we are no longer in the setting of ``weakly interacting''. This is of course not a problem for standard SDE simulation. We show two aspects of particle corruption in a simple example. Firstly, one particle can cause the whole system to crash. Secondly and perhaps more profoundly, the more particles one has the more likely this occurs. This is of course a devastating issue when simulating a MV-SDE since accurately approximating the measure depends on having a large number of interacting particles.

To show this example we take a classical non-globally Lipschitz SDE, the stochastic Ginzburg Landau equation (see \cite{Tien2013}) and add a simple mean field term to it,
\begin{align*}
\dd X_{t} = \Big( \frac{\sigma^{2}}{2} X_{t} - X_{t}^{3} + c \bE[X_{t}] \Big) \dd t + \sigma X_{t} \dd W_{t}, \quad X_{0}=x.
\end{align*}
This MV-SDE clearly satisfies the assumption to have a unique strong solution in $\bS^{p}$ for all $p>1$, hence in theory one could calculate $\varphi(t):=\bE[X_{t}]$ and have a standard SDE with one-sided Lipschitz drift. The analysis carried out in \cite{HutzenthalerEtAl2011} then implies that the Euler scheme diverges here.

\textbf{Showing particle corruption exists.}
 For our example we simulate $N=5000$ particles with a time step $h=0.05$, $T=2$ and $X_{0}=1$, we also take $\sigma=3/2$ and $c=1/2$. We rerun this example until we observed a blow up and plotted the particle paths in Figure \ref{Fig:ParticleCorruptionPart1}.
 
 \begin{figure}[ht] 
 	\centering
 	\includegraphics[width=12cm]{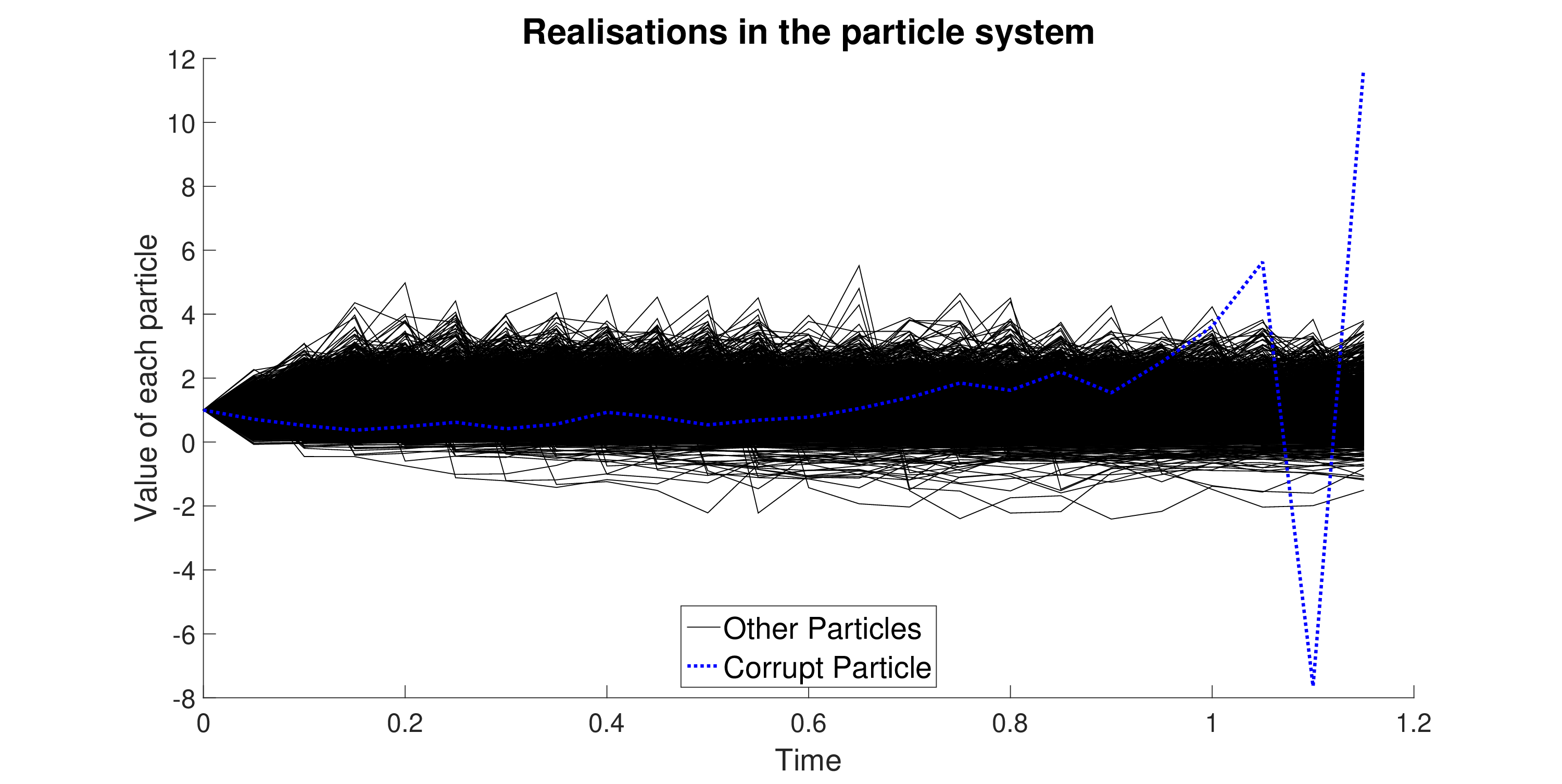}
 	\caption{Showing the realizations of the particles in the system. We note that the particle given by the dashed line is starting to oscillate and is taking larger values than its surrounding particles.}
 	\label{Fig:ParticleCorruptionPart1}
 \end{figure}

Figure \ref{Fig:ParticleCorruptionPart1} shows the first part of the divergence, namely all particles are reasonably well behaved until one starts to oscillate rapidly. We have stopped plotting before the time boundary since this particle diverges shortly after this. We refer to this particle as the \emph{corrupt particle} and it is fairly straightforward to see it will diverge. However, due to the interaction this single particle influences all the remaining particles and the whole system diverges shortly after.

\begin{remark}
	[Why is particle corruption so pronounced?]
	The reason this effect is so dramatic is a simple consequence of the mean-field interaction. Typically, one observes divergence of the Euler scheme via a handful of Monte Carlo simulations that return extremely large (or infinite) values. When one then looks to calculate the expected value of the SDEs at the terminal time for example, these few events completely dominate the other results. This is summed up in a statement of \cite{HutzenthalerEtAl2011}, where an exponentially small probability event has a double exponential impact.
	
	The difference in the MV-SDE (weakly interacting particle) case is that the expectation appears inside the simulation, hence a divergence of a single particle influences multiple particles simultaneously during the simulation and not just at the final time. 
\end{remark}

\textbf{Convergence of Euler and propagation of chaos is impossible.}
The above shows that one particle diverging can cause the whole system to diverge. One may argue that using more particles would reduce the dependency between them and hence influence the system less. In fact as we shall see the opposite is true, the more particles the more likely a divergence is. To test this we use the same example as above but use $N=[1000, ~ 5000, ~ 10000, ~ 20000]$ particles and rerun each case $1000$ times and record the total number of times we observe a divergence over the ensemble.
\begin{table}[htb]
	\centering
	\small
	\begin{tabular}{ c | c | c | c | c}
		Number of particles & $1000$ & $5000$  & $10000$ & $20000$ \\ \hline
		Number of blow ups  & $3$ & $32$ & $43$ & $108$ \\ \hline
	\end{tabular}
	\caption{Number of divergences recorded at each particle level out of $1000$ simulations.}
	\label{Table:Number of blow ups}
\end{table}

The results in Table \ref{Table:Number of blow ups} show conclusively that the more particles the more likely a divergence is to occur. This is a real problem in this setting since in order to minimize the propagation of chaos error one should take $N$ as large as possible, but doing so makes the Euler scheme approximation (more likely to) diverge.

\begin{remark}[Euler cannot work]
	We have shown that naively applying the standard Euler scheme in the MV-SDE setting with non globally Lipschitz coefficient has issues. However, for standard SDEs there are some simple fixes one can apply and still obtain convergence e.g.\ removing paths that leave some ball as considered in \cite{MilsteinTretyakov2005}. Unfortunately, methods of this form cannot work here for the following reason: Either we take the ball ``small'' and therefore our approximation to the law is poor. Or we take a large ball, but then, particles close to the boundary may ``drag'' other particles with them, which again makes the system unstable. 
	
	The dependence on the measure (other particles) implies that the cruder approximation techniques cannot yield the strong convergence results we obtain with the more sophisticated techniques presented in this paper.
	In \cite{BaladronFasoliFaugerasEtAl2012} (see note \cite{BossyEtAl2015}) the authors have a non-globally Lipschitz MV-SDE and simulate using standard Euler scheme. Since no divergence was observed in their simulations they conjectured that the Euler scheme works in their setting. However, their example used a ``small'' diffusion coefficient ($\sigma\in[0,0.5]$) and small particle number (in the order of hundreds). As our analysis points out, this makes divergence unlikely to be observed (but not impossible). For MV-SDEs in general, approaches such as taking a small particle number will yield poor approximation results and therefore not appropriate. Again, our methods provide certainty in terms of convergence (and convergence rate).
\end{remark}

\textbf{Phase transition and particle systems within a bistable potential.}  We have applied our algorithms to the problem highlighted in \cite{gomes2019mean} (see their equation (2.1) and the setup of their Section 3.5) and shortly report that we recover the same findings as above to their problem when dealing with the bistable potential $V_\eta(\eta)=\eta^4/4-\eta^2/2$. Divergence of the explicit Euler scheme in \cite[{Section 3.5}]{gomes2019mean} when using $V'_\eta(\eta)$, while both schemes that we propose behave as we have described. We do not provide the numerical experiments as it would be a repetition of the results above.


\subsection{Timing of implicit vs explicit: size of cloud and spatial dimension}
It is well documented that for a fixed stepsize implicit schemes are slower than explicit ones, mainly because one must solve a fixed point equation at each step. This operation is not ``cheap'' and moreover scales $d^{2}$ in dimension, see \cite{HutzenthalerEtAl2012}. Of course this analysis is carried out for standard SDEs. What we wish to consider is how the particle system affects the timing of both methods.

We consider the same example as previous and again use $h=0.05$, but take $T=1$, we then consider a set of dimensions from $1$ to $200$ and number of particles from $100$ to $20000$. Plotting the time taken for both methods is given in Figure \ref{Fig:Explicit and Implicit Timing}.

\begin{figure}[bht]%
	\centering
	\subfloat[Explicit Scheme]{{\includegraphics[width=7.3cm]{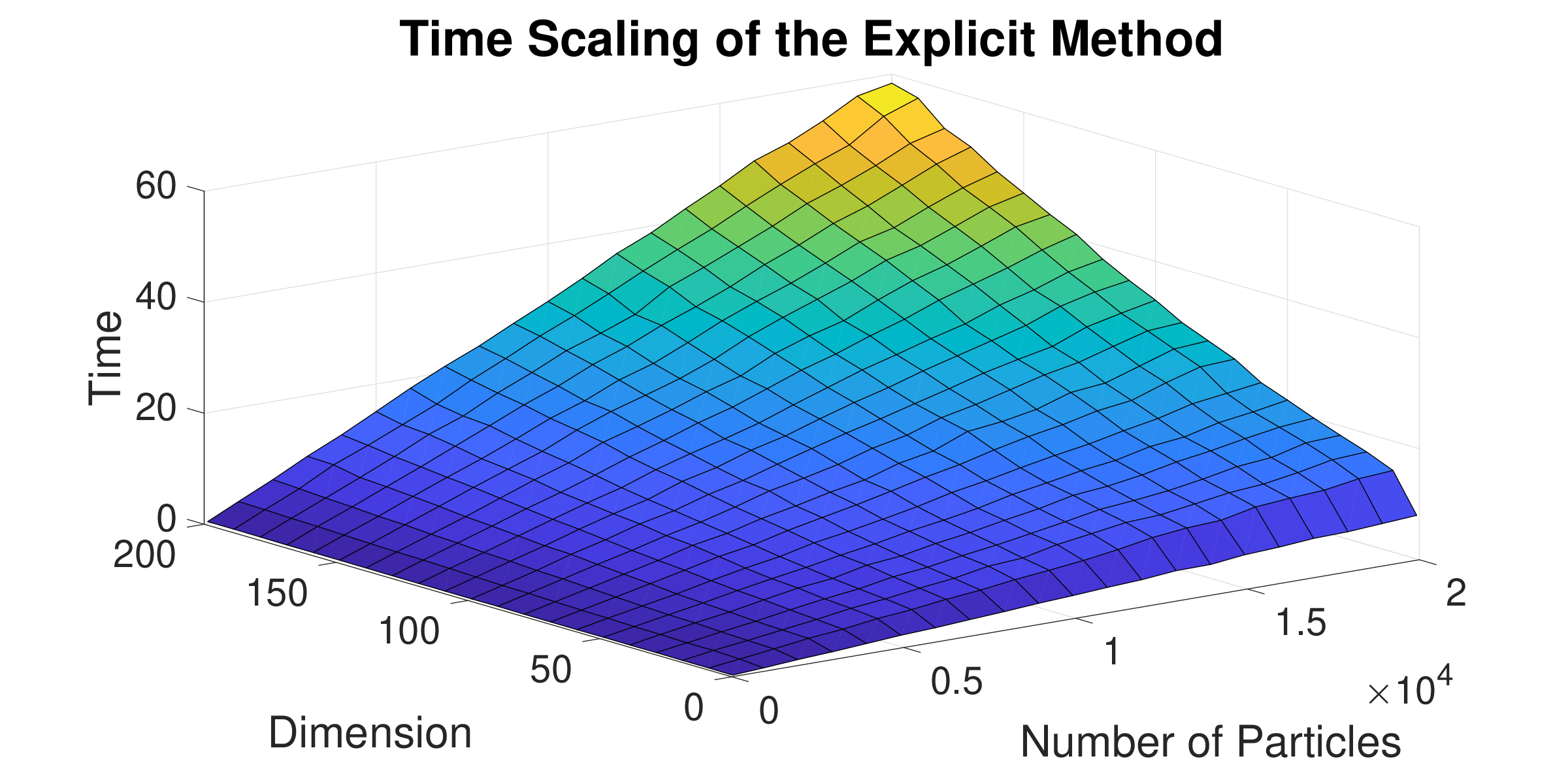} }}%
	\hspace{-0.6cm}
	\subfloat[Implicit Scheme]{{\includegraphics[width=7.3cm]{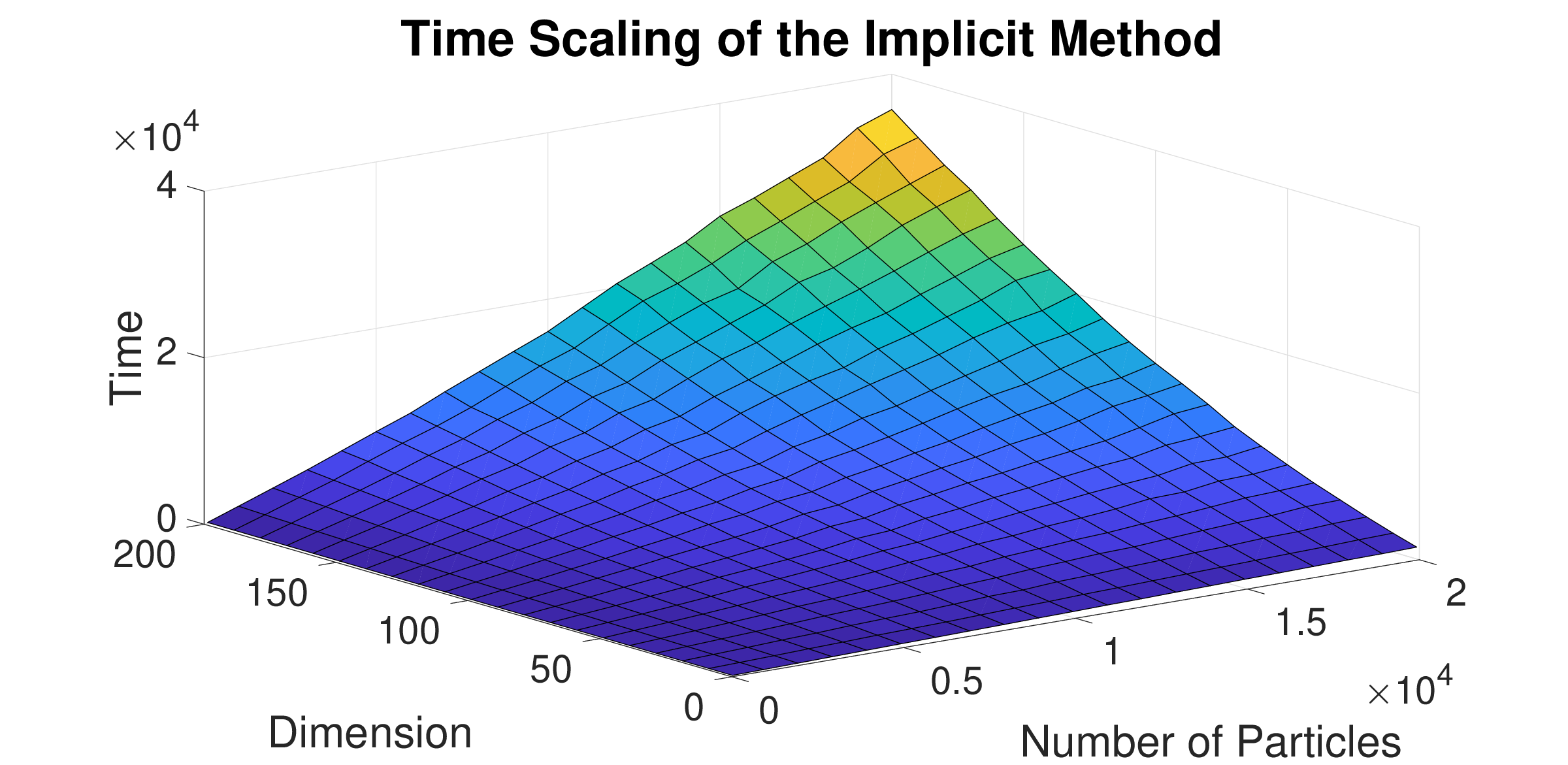} }}%
	\caption{Showing how the CPU-time (in seconds) of the explicit scheme (left; timescale $\approx60$ seconds) and implicit scheme (right; timescale $\approx10^4$ seconds) changes with particles and dimension. }%
	\label{Fig:Explicit and Implicit Timing}%
\end{figure}

Firstly, we observe that the explicit scheme is two to three orders of magnitude faster than the implicit scheme. At the highest dimensional and particle number this difference is very apparent with the tamed scheme taking approximately 1 minute and the implicit 10 hours. Another note to make is the scaling of each method: both methods scale similarly with particle number, but the tamed scheme scales linearly with dimension; this is superior to the $d^{2}$ scaling of the implicit scheme.

Even for the case $d=1$, $N=20000$ the tamed scheme takes approximately $7$ seconds while the implicit scheme takes approximately $23$ minutes. For many practical applications $N=20000$ is not enough for an acceptable level of accuracy, with this in mind and the dimension scaling, this makes the implicit scheme a very expensive method in this setting.


\subsection{Explicit vs implicit convergence: the neuron network model}
\label{sec:NeuRonNumerics}

We compare the convergence of the explicit and the implicit scheme. To this end we use the system in \cite{BaladronFasoliFaugerasEtAl2012} where the authors develop a non globally Lipschitz MV-SDE to model neuron activity.
In our notation their system with $b:[0,T] \times \IR^3 \times \cP_2(\IR^3) \to \IR^3$, $\sigma:[0,T] \times \IR^3 \times \cP_2(\IR^3) \to \IR^{3\times 3}$ reads for $x=(x_1,x_2,x_3),z=(z_1,z_2,z_3) \in \IR^3$ as
\begin{align*}
b \left( t, x, \mu \right) 
&:= \left( \begin{array}{c}
x_1 - (x_1)^3 / 3 - x_2 + I - \int_{\IR^3} J \left( x_1 - V_{rev} \right) z_3 \dd \mu (z) \\
c \left( x_1 + a - b x_2 \right) \\
a_r \frac{ T_{max} (1 - x_3) }{ 1 + \exp ( - \lambda ( x_1 - V_T)) } - a_d x_3
\end{array} \right) \\
\sigma \left( t, x, \mu \right) 
&:= \left( \begin{array}{ccc}
\sigma_{ext} & 0 & - \int_{\IR^3} \sigma_J \left( x_1 - V_{rev} \right) z_3 \dd \mu (z) \\
0 & 0 & 0 \\
0 & \sigma_{32} (x) & 0
\end{array} \right)
\end{align*}
with
\begin{equation*}
\sigma_{32} (x) := \1_{\{x_{3} \in (0,1)\}} \sqrt{ a_r \frac{ T_{max} (1 - x_3) }{ 1 + \exp ( - \lambda ( x_1 - V_T)) } + a_d x_3 } \ \Gamma \exp ( - \Lambda / (1 - (2x_3 - 1)^2)),
\end{equation*}
$T=2$ is chosen as the final time and
\begin{equation*}
X_0 \sim \mathcal{N} \left( \left( \begin{array}{c} V_0 \\ w_0 \\ y_0 \end{array} \right) , \left( \begin{array}{ccc} \sigma_{V_0} & 0 & 0 \\ 0 & \sigma_{w_0} & 0 \\ 0 & 0 & \sigma_{y_0} \end{array} \right) \right),
\end{equation*}
where the parameters have the values
\begin{equation*}
\begin{array}{ccccccc}
V_0 = 0 & \sigma_{V_0} = 0.4   & a = 0.7   & b = 0.8    & c=0.08   & I = 0.5  & \sigma_{ext} = 0.5 
\\
w_0 = 0.5 & \sigma_{w_0} = 0.4 & V_{rev} = 1   & a_r = 1  & a_d = 1   & T_{max} = 1   & \lambda = 0.2  
\\
y_0 = 0.3 & \sigma_{y_0} = 0.05  & J = 1   & \sigma_J = 0.2  & V_T = 2   & \Gamma = 0.1   & \Lambda = 0.5.
\end{array}
\end{equation*}
As the true solution is unknown to compare the convergence rates, we use as proxy the output of the explicit scheme with $2^{23}$ steps. Since the explicit scheme has convergence rate $\sqrt{h}$ we know that $2^{16}$ steps and below yields one order of magnitude larger errors. The simulation for $1000$ particles and average root mean square error of each particle is given in Figure \ref{Fig:ExplicitVsImplicitConvergence}.

One can observe that although initially the implicit scheme has a better rate of convergence, it levels off to yield the expected $1/2$ rate\footnote{\label{footnote01}One can note that the x-axis is written in terms of runtime rather than number of time-steps. As there is a one to one correspondence between the time-steps and the time taken we can still determine the rate. However, this scale allows one to compare both the rate and the time-taken to achieve a given error.}, therefore, making the implicit scheme less computationally efficient than the explicit scheme. Of course our ``true'' was calculated from the explicit scheme, hence we additionally carried out a similar test with a ``true'' from the implicit, and the results were almost identical.

  \begin{figure}[htb]
	\centering
	\includegraphics[width=9cm]{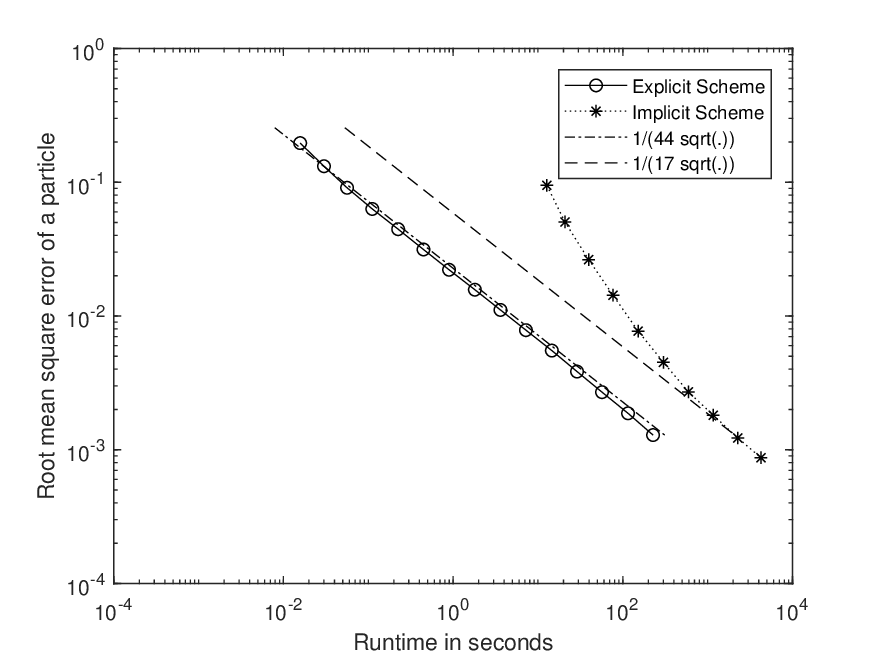}
	\caption{Root mean square error of the explicit and implicit (see footnote \ref{footnote01}). The number of steps, {with $h=T/M$}, of the explicit scheme are $M \in \{2^2, 2^3, \ldots, 2^{16} \}$ and of the implicit scheme are $M \in \{ 2^2, 2^3, \ldots, 2^{11} \}$. We used $1000$ particles and the true is calculated from the explicit with $2^{23}$ steps. Both schemes converge with rate $1/2$. }
	\label{Fig:ExplicitVsImplicitConvergence}
\end{figure}

\begin{remark}[Small diffusion setting]
Above, we have taken $\sigma_{ext}=0.5$, this goes against the example in \cite{BaladronFasoliFaugerasEtAl2012} where $\sigma_{ext}=0$. As it turns out, in the case $\sigma_{ext} = 0$, the implicit scheme has a convergence rate close to $1$ (up to an error of around $10^{-4}$), while the explicit scheme maintains the standard $1/2$ rate. It is our belief that this is due to the fact that when $\sigma_{ext}=0$ the diffusion coefficient makes little difference, hence both scheme revert close to their deterministic convergence rate. The explicit scheme of course still has a rate of order $1/2$, while the implicit is order $1$. It may therefore be that in the setting of small diffusion terms the implicit can yield superior results, of course though this is a special case and is not true in general.
\end{remark}

\subsection*{Obtaining the density}

  \begin{figure}[ht]
	\centering
	\includegraphics[width=10cm]{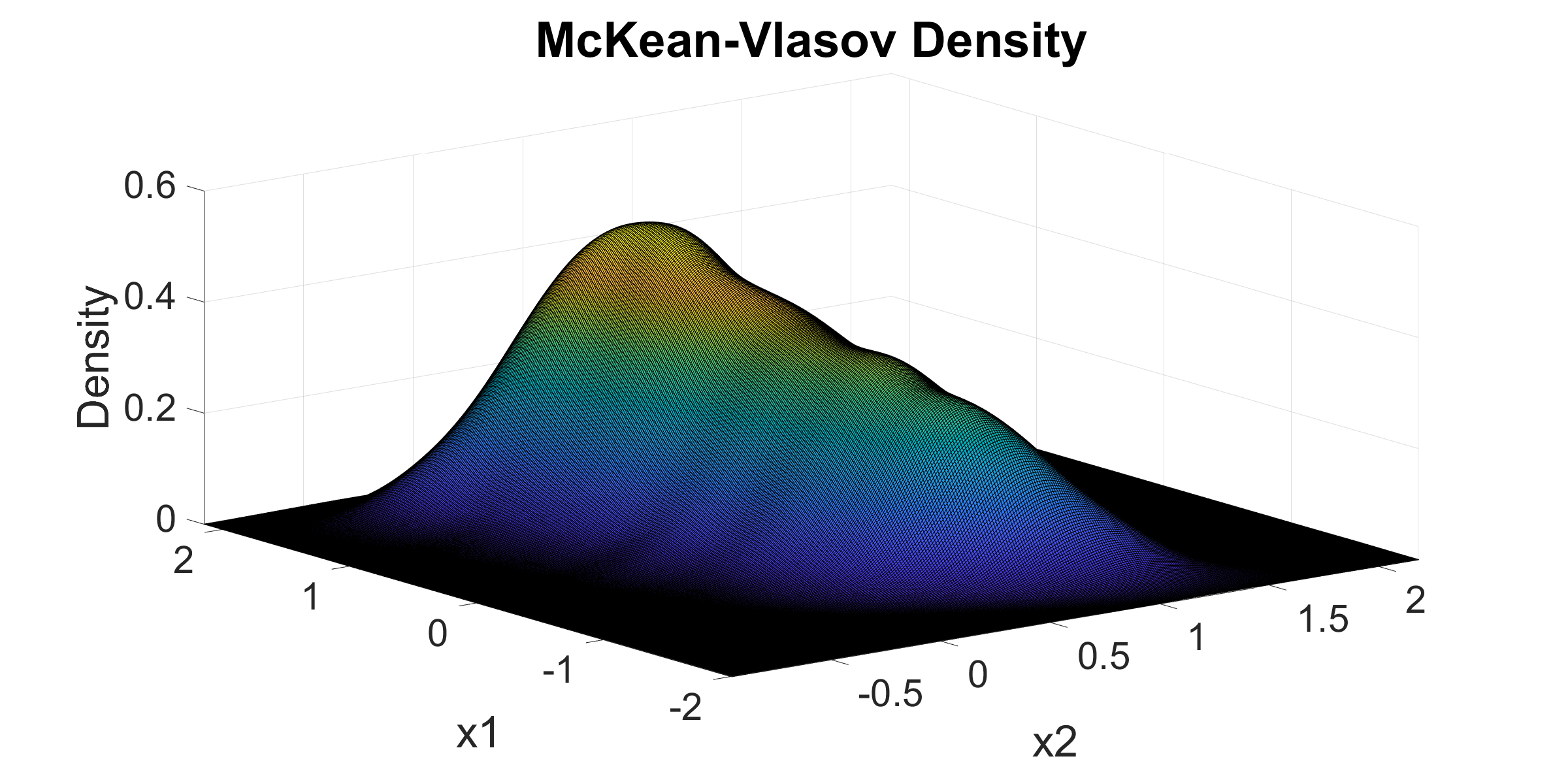}
	\caption{Approximate density of the first and second component of the MV-SDE at time $T=1.2$. We used $10000$ particles, $2^{20}$ steps and a bandwidth of $0.15$ in the kernel smoothing.}
	\label{Fig:DensityApproximation}
\end{figure}

In some applications one may be interested in approximating the distribution (law) of the MV-SDE at a future time. In \cite[Section 4]{BaladronFasoliFaugerasEtAl2012} the authors compare density estimates using both the Fokker-Plank equation and the histogram from the particle system. The approach using PDEs becomes computationally expensive here if one considers multiple populations of MV-SDE and hence the authors take a simple case (see \cite[Section 4.3]{BaladronFasoliFaugerasEtAl2012}). There are of course other drawbacks such as dimension scaling, which often make stochastic techniques more favourable in this setting. 
Moreover, using the PDE approach, one will only obtain the density. While \cite{BaladronFasoliFaugerasEtAl2012} apply a basic histogram approach when using MV-SDEs, this does not yield particularly nice results, namely, the resultant density is not a smooth surface. There are however many statistical techniques one can use to improve this, see \cite[Chapter 18.4]{Keener2011} for further results and discussion. Taking the example in \cite{BaladronFasoliFaugerasEtAl2012} (with $\sigma_{ext}=0$) and applying MATLAB's \verb|ksdensity| function we obtain Figure \ref{Fig:DensityApproximation}.

One can observe the similarity between our result using SDEs and the one obtained in \cite[pg 31]{BaladronFasoliFaugerasEtAl2012} using the (expensive) PDE approach.

%
%
%
\subsection*{Conclusions and future work}

We have shown how one can apply the techniques from SDEs to the MV-SDE setting and some of its pitfalls and challenges that arise. The numerical testing carried out shows that the explicit scheme yields superior results (over the implicit scheme) in general.

Although we have been able to obtain convergence for the implicit scheme it is under stronger assumptions than the explicit scheme (the implicit scheme works very well in Section \ref{sec:NeuRonNumerics}). The reason for these assumptions is that the implicit scheme is more challenging to bound than the explicit. The standard approach around this problem is to use stopping time arguments. However, as described in Remark \ref{rem:No stopping times}, stopping times are harder to handle in the MV-SDE framework. Caution is needed to account for the extra technicalities that arise. 

It is our belief that Assumption \ref{Ass:Extra Implicit Bounds}, although sufficient, is not necessary to guarantee that the implicit scheme converges. As research is carried out into stopping times and MV-SDEs, future theoretical developments in this direction may allow this assumption to be weakened. 
We also leave open a proof for the convergence rate of the implicit scheme.  Showing such a convergence rate in our framework is clearly possible but adds little in scope given the gains of the explicit over the implicit scheme. We leave the question open until a time a more resourceful implicit scheme can be designed.

Another interesting area which we have not discussed is sign preservation and the impact it has on the law. For example a MV-SDE may be known to be positive. However, if the numerical scheme takes the solution into the negative region how does the law dependence influence the remaining particles? One can consider the special case of $L_{b}<0$ in Assumption \ref{Ass:Monotone Assumption}, even though the MV-SDE could have a nonnegative solution, the numerical scheme may not preserve this feature.


\section{Proof of Main Results}
\label{Sec:Proofs}

We shall use $C$ to denote a constant that can change from line to line, but only depends on known quantities, $T$, $d$, the one-sided Lipschitz coefficients {and other constants independent of $h$ or $M$ (with $h=T/M$).}

\subsection{Propagation of chaos}
Let us show the propagation of chaos result.

\begin{proof}[Proposition \ref{Prop:Propagation of Chaos}]
	Let us fix $1 \le i \le N$, we then approach the proof in the usual way for dealing with one-sided Lipschitz coefficients, namely we apply It\^{o}'s formula to the difference (note the $X_{0}^{i}$ cancel out),
	\begin{align}
	|X_{t}^{i} - X_{t}^{i,N}|^{2}
	= &
	\int_{0}^{t} 2 \langle X_{s}^{i} - X_{s}^{i,N}, b(s,X_{s}^{i}, \mu_{s})- b(s, X_{s}^{i,N}, \mu^{X,N}_{s}) \rangle \dd s
	\notag
	\\
	&
	+
	\int_{0}^{t} 2 \langle X_{s}^{i} - X_{s}^{i,N}, (\sigma(s,X_{s}^{i}, \mu_{s})- \sigma(s, X_{s}^{i,N}, \mu^{X,N}_{s})) \dd W_{s}^{i} \rangle
	\notag
	\\
	&
	+
	\sum_{a=1}^{l} \int_{0}^{t} | \sigma_{a}(s,X_{s}^{i}, \mu_{s})- \sigma_{a}(s, X_{s}^{i,N}, \mu^{X,N}_{s}) |^{2} \dd s \, ,
	\label{Eq:Ito Prop Expansion}
	\end{align}
	where $\sigma_{a}$ is the $a$th column of matrix $\sigma$, hence $\sigma_{a}$ is a $d$-dimensional vector. Considering the first integral in \eqref{Eq:Ito Prop Expansion},
	\begin{align*}
	\langle X_{s}^{i} - X_{s}^{i,N}&, b(s,X_{s}^{i}, \mu_{s})- b(s, X_{s}^{i,N}, \mu^{X,N}_{s}) \rangle
\\
&
	=
	\langle X_{s}^{i} - X_{s}^{i,N}, b(s,X_{s}^{i}, \mu_{s})- b(s, X_{s}^{i,N}, \mu_{s}) \rangle
	+
	\langle X_{s}^{i} - X_{s}^{i,N}, b(s, X_{s}^{i,N}, \mu_{s})- b(s, X_{s}^{i,N}, \mu^{X,N}_{s}) \rangle.
	\end{align*}
	Applying the one-sided Lipschitz property in space and $W^{(2)}$ in measure along with Cauchy-Schwarz, we obtain
	\begin{align*}
	\langle X_{s}^{i} - X_{s}^{i,N}, b(s,X_{s}^{i}, \mu_{s})&- b(s, X_{s}^{i,N}, \mu^{X,N}_{s}) \rangle
	\le
	C|X_{s}^{i}-X_{s}^{i,N}|^{2}
	+
	C|X_{s}^{i}-X_{s}^{i,N}| W^{(2)}(\mu_{s}, \mu^{X,N}_s) \, .
	\end{align*}
	{As in \cite{Sznitman1991} or \cite{Meleard1996}}, we introduce the empirical measure  constructed from i.i.d.~samples of the true solution $\mu_{s}^{N} := \frac{1}{N} \sum_{j=1}^{N} \delta_{X_{s}^{j}}$. As $W^{(2)}$ is a metric (see \cite[Chapter 6]{Villani2008}), we have
	\begin{align*}
	W^{(2)}(\mu_{s}, \mu^{X,N}_s)
	\le
	W^{(2)}(\mu_{s}, \mu_{s}^{N})
	+
	W^{(2)}(\mu_{s}^{N}, \mu^{X,N}_s) \, .
	\end{align*}
	Since $\mu_{s}^{N}$, $\mu^{X,N}_s$ are empirical measures a standard result for the Wasserstein metric is 
	\begin{align*}
	W^{(2)}(\mu_{s}^{N}, \mu^{X,N}_s) 
	\le
	\Big( \frac{1}{N} \sum_{j=1}^{N} |X_{s}^{j} - X_{s}^{j,N}|^{2} \Big)^{1/2} \, .
	\end{align*}
	We leave the other $W^{(2)}$ term for the moment and consider the diffusion coefficient in the time integral. Since $\sigma$ is globally Lipschitz and $W^{(2)}$ for each $a$ (by definition $\sigma_{a}= \sigma e_{a}$, with $e_{a}$ the basis vector, global Lipschitz follows from our norm), we get
	\begin{align*}
	&| \sigma_{a}(s, X_{s}^{i}, \mu_{s}) - \sigma_{a}(s, X_{s}^{i,N}, \mu^{X,N}_s)|^{2}
	\\
	&
	\le
	C \big( | \sigma_{a}(s, X_{s}^{i}, \mu_{s}) - \sigma_{a}(s, X_{s}^{i,N}, \mu_{s})|^{2}
	+
	| \sigma_{a}(s, X_{s}^{i,N}, \mu_{s}) - \sigma_{a}(s, X_{s}^{i,N}, \mu^{X,N}_s)|^{2}
	\big)
	\\
	&
	\le C \big( |X_{s}^{i} - X_{s}^{i,N}|^{2} + W^{(2)}( \mu_{s}, \mu^{X,N}_s)^{2} \big)
	\\
	&
	\le C \big( |X_{s}^{i} - X_{s}^{i,N}|^{2} + \frac{1}{N} \sum_{j=1}^{N} |X_{s}^{j}- X_{s}^{j,N}|^{2} + W^{(2)}( \mu_{s}, \mu_{s}^{N})^{2} \big) \, .
	\end{align*} 
	One can note this is independent of $a$.
	The final term to bound is the stochastic integral term. To do this we apply the supremum and expectation operator to \eqref{Eq:Ito Prop Expansion}
	\begin{align}
	\notag
	\bE \Big[ \sup_{t \in [0,T]}|X_{t}^{i} - X_{t}^{i,N}|^{2} \Big]
	&
	\le 
	C \bE \Big[ \sup_{t \in [0,T]}
	\int_{0}^{t} |X_{s}^{i}-X_{s}^{i,N}|^{2}
	+
	|X_{s}^{i}-X_{s}^{i,N}| W^{(2)}(\mu_{s}, \mu^{X,N}_s) \dd s
	\Big]
	\notag
	\\
	&
	+
	\bE \Big[ \sup_{t \in [0,T]}
	\int_{0}^{t} 2 \langle X_{s}^{i} - X_{s}^{i,N}, (\sigma(s,X_{s}^{i}, \mu_{s})- \sigma(s, X_{s}^{i,N}, \mu^{X,N}_s)) \dd W_{s}^{i} \rangle
	\Big]
	\notag
	\\
	&
	+
	C l \bE \Big[ \sup_{t \in [0,T]}
	\int_{0}^{t}   |X_{s}^{i} - X_{s}^{i,N}|^{2} + \frac{1}{N} \sum_{j=1}^{N} |X_{s}^{j}- X_{s}^{j,N}|^{2} + W^{(2)}( \mu_{s}, \mu_{s}^{N})^{2}  \dd s
	\Big] \, .
\label{Eq:Expected Difference Prop}
	\end{align}
	For the stochastic integral,
	\begin{align*}
	&
	\bE \Big[ \sup_{t \in [0,T]}
	\int_{0}^{t} 2 \langle X_{s}^{i} - X_{s}^{i,N}, (\sigma(s,X_{s}^{i}, \mu_{s})- \sigma(s, X_{s}^{i,N}, \mu^{X,N}_s)) \dd W_{s}^{i} \rangle
	\Big]
	\\
	&
	\le
	\bE \Big[ \sup_{t \in [0,T]}
	\Big|
	\int_{0}^{t} 2 \langle X_{s}^{i} - X_{s}^{i,N}, (\sigma(s,X_{s}^{i}, \mu_{s})- \sigma(s, X_{s}^{i,N}, \mu^{X,N}_s)) \dd W_{s}^{i} \rangle
	\Big|
	\Big]
	\\
	&
	\le
	C \bE \Big[ 
	\Big(
	\int_{0}^{T}  \Big(
	\sum_{a=1}^{l} | \sigma_{a}(s,X_{s}^{i}, \mu_{s})- \sigma_{a}(s, X_{s}^{i,N}, \mu^{X,N}_s) |^{2}
	\Big) 
	|X_{s}^{i} - X_{s}^{i,N}|^{2}
	\dd s
	\Big)^{1/2}
	\Big]
	\\
	&
	\le
	\bE \Big[ 
	\Big(
	\sup_{t \in [0,T]}
	|X_{t}^{i} - X_{t}^{i,N}|^{2}
	C \int_{0}^{T}  
	\sum_{a=1}^{l} 
	| \sigma_{a}(s,X_{s}^{i}, \mu_{s})- \sigma_{a}(s, X_{s}^{i,N}, \mu^{X,N}_s) |^{2}
	\dd s
	\Big)^{1/2}
	\Big] \, ,
	\end{align*}
	where we have applied Burkholder-Davis-Gundy to remove the stochastic integral.
	Using Young's inequality $ab \le a^{2}/2 + b^{2}/2$ we can bound this term by
	\begin{align*}
	\bE \Big[ 
	\frac{1}{2}
	\sup_{t \in [0,T]}
	|X_{t}^{i} - X_{t}^{i,N}|^{2}
	+
	\frac{C}{2} \int_{0}^{T} 
	\sum_{a=1}^{l} 
	| \sigma_{a}(s,X_{s}^{i}, \mu_{s})- \sigma_{a}(s, X_{s}^{i,N}, \mu^{X,N}_s) |^{2}
	\dd s
	\Big] \, .
	\end{align*}
	Substituting into \eqref{Eq:Expected Difference Prop} yields,
	\begin{align*}
	\bE \Big[ \sup_{t \in [0,T]}|X_{t}^{i} - X_{t}^{i,N}|^{2} \Big]
	& \le 
	C \bE \Big[ \sup_{t \in [0,T]}
	\int_{0}^{t} |X_{s}^{i}-X_{s}^{i,N}|^{2}
	+
	|X_{s}^{i}-X_{s}^{i,N}| W^{(2)}(\mu_{s}, \mu^{X,N}_s) \dd s
	\Big]
	\\
	&
	+
	\bE \Big[ 
	\frac{1}{2}
	\sup_{t \in [0,T]}
	|X_{t}^{i} - X_{t}^{i,N}|^{2}
	+
	\frac{C}{2} \int_{0}^{T} 
	\sum_{a=1}^{l} 
	| \sigma_{a}(s,X_{s}^{i}, \mu_{s})- \sigma_{a}(s, X_{s}^{i,N}, \mu^{X,N}_s) |^{2}
	\dd s
	\Big]
	\\
	&
	+
	C \bE \Big[ \sup_{t \in [0,T]}
	\int_{0}^{t}   |X_{s}^{i} - X_{s}^{i,N}|^{2} + \frac{1}{N} \sum_{j=1}^{N} |X_{s}^{j}- X_{s}^{j,N}|^{2} + W^{(2)}( \mu_{s}, \mu_{s}^{N})^{2}  \dd s
	\Big] \, .
	\end{align*}
	Taking the $\frac{1}{2}
	\sup_{t \in [0,T]}
	|X_{t}^{i} - X_{t}^{i,N}|^{2}$ to the other side, noting that the supremum value over the integrals is $t=T$ and using the bound for the difference in $\sigma$ we obtain,
	\begin{align*}
	\bE \left[ \sup_{t \in [0,T]}|X_{t}^{i} - X_{t}^{i,N}|^{2} \right]
	&\le
	C \bE \left[ 
	\int_{0}^{T} |X_{s}^{i}-X_{s}^{i,N}|^{2}
	+
	|X_{s}^{i}-X_{s}^{i,N}| W^{(2)}(\mu_{s}, \mu^{X,N}_s) \dd s
	\right]
	\\
	&
	+
	C \bE \left[ 
	\int_{0}^{T}   |X_{s}^{i} - X_{s}^{i,N}|^{2} + \frac{1}{N} \sum_{j=1}^{N} |X_{s}^{j}- X_{s}^{j,N}|^{2} + W^{(2)}( \mu_{s}, \mu_{s}^{N})^{2}  \dd s
	\right] \, .
	\end{align*}
	To deal with the summation term, observe that since all $j$ are identically distributed,
	\begin{align*}
	\bE \Big[
	\frac{1}{N} \sum_{j=1}^{N} |X_{s}^{j}- X_{s}^{j,N}|^{2} 
	\Big]
	=
	\bE \big[
	|X_{s}^{i}- X_{s}^{i,N}|^{2} 
	\big] \, .
	\end{align*} 
	Therefore, applying Young's inequality to $|X_{s}^{i}-X_{s}^{i,N}| W^{(2)}(\mu_{s}, \mu_{s}^{N})$ and taking the supremum over $i$,
	\begin{align*}
	\sup_{1 \le i \le N} \bE \left[  \sup_{t \in [0,T]}|X_{t}^{i} - X_{t}^{i,N}|^{2} \right]
	\le &
	C 
	\int_{0}^{T} \sup_{1 \le i \le N}  \bE  \left[ |X_{s}^{i}-X_{s}^{i,N}|^{2} \right]
	+ \bE [W^{(2)}( \mu_{s}, \mu_{s}^{N})^{2}]  \dd s
	\\
	\le &
	C  
	\int_{0}^{T}
	\bE \left[W^{(2)}( \mu_{s}, \mu_{s}^{N})^{2} \right]
	\dd s
	\, ,
	\end{align*}	
	where the final step follows from Gr\"onwall's inequality. At this point, one could conclude a pathwise propagation of chaos result, see {\cite{Sznitman1991}, \cite{Meleard1996}, \cite[Lemma 1.9]{Carmona2016}}, however, here we are interested in the rate of convergence.  We use the improved version \cite[Theorem 5.8]{CarmonaDelarue2017book1} of the classical convergence result \cite[Chapter 10.2]{RachevRueschendorf1998-Vol2}. Provided $X_{\cdot}^{i} \in L_{\cdot}^{p}( \bR^{d})$ for any $p>4$, which follows from \cite[Theorem 3.3]{dosReisSalkeldTugaut2017} then for any $s$,
	\begin{equation*}
	\bE \left[W^{(2)}( \mu_{s}, \mu_{s}^{N})^{2} \right] 
	\le C
	\begin{cases}
	N^{-1/2} & \text{if } d<4,
	\\
	N^{-1/2} \log(N) \quad & \text{if } d=4,
	\\
	N^{-2/d} & \text{if } d>4.
	\end{cases} 
	\end{equation*}
	Using the result in Theorem \ref{Thm:MV Monotone Existence} with our assumption then completes the proof.
\end{proof}


\subsection{Proof of explicit convergence}
\label{Sec:Proofs of Explicit}
We prove Proposition \ref{Prop:Tamed Scheme Convergence} by establishing first a few auxiliary results. Recall $h=T/M$. To keep expressions compact we introduce 
	\begin{align*}
	\Delta X_{s}^{i,N,M}
	:=
	X_{s}^{i,N} - X_{s}^{i,N,M} \, \quad \textrm{for $s \in [0,T]$.}
	\end{align*}	
	Further, we will use throughout and without mentioning the following result
	\begin{align*}
	\bE \Big[ \frac{1}{N} \sum_{j=1}^N \left\vert \Delta X_{s}^{j,N,M} \right\vert^2 \Big]
	& = 
	\bE \left[ \left\vert \Delta X_{s}^{i,N,M} \right\vert^2 \right]
	=
	 \sup_{1 \leq j \leq N } \bE \left[ \left\vert \Delta X_{s}^{j,N,M} \right\vert^2 \right],
	\end{align*}
	which holds because for every $i$ the RVs are identically distributed.

\begin{lemma} \label{lemma:EstItoPartial}
	Suppose Assumption \ref{Ass:Monotone Assumption} and \ref{Ass:Holder in Time} are fulfilled, then there exists a constant $C$ which is independent of $N$ and $h$ {(or $M$ with $h=T/M$) }such that
	\begin{align*}
		\langle X^{i,N,M}_{t}, b_M \left( t, X^{i,N,M}_{t}, \mu^{X,N,M}_{t} \right) \rangle
		\leq 
		C \left( 1 + \vert X^{i,N,M}_{t} \vert^2 + \frac{1}{N} \sum_{j=1}^N \vert X^{j,N,M}_{t} \vert^2 \right)
	\end{align*}
	and
	\begin{align*}
		\left\vert \sigma \left( t, X^{i,N,M}_{t}, \mu^{X,N,M}_{t} \right) \right\vert^{2}
		\leq 
		C \left( 1 + \vert X^{i,N,M}_{t} \vert^2 + \frac{1}{N} \sum_{j=1}^N \vert X^{j,N,M}_{t} \vert^2 \right)
		.
	\end{align*}
\end{lemma}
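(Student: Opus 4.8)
The plan is to prove both inequalities pointwise in $\omega$: fix $t\in[0,T]$ and establish the two bounds for an arbitrary \emph{deterministic} $x\in\bR^d$ and $\nu\in\cP_2(\bR^d)$, and only at the end substitute $x=X^{i,N,M}_{t}$ and $\nu=\mu^{X,N,M}_{t}$. The key observation making this work is that, with $\delta_0$ denoting the Dirac mass at the origin (which lies in $\cP_2(\bR^d)$), one has $W^{(2)}(\mu^{X,N,M}_{t},\delta_0)^2=\int_{\bR^d}|y|^2\,\mu^{X,N,M}_{t}(\dd y)=\frac1N\sum_{j=1}^N|X^{j,N,M}_{t}|^2$, so any bound of the form $C(1+|x|^2+W^{(2)}(\nu,\delta_0)^2)$ collapses precisely to the right-hand side claimed in the lemma. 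Only the one-sided Lipschitz and the Lipschitz-in-law parts of Assumption \ref{Ass:Monotone Assumption}, together with Assumption \ref{Ass:Holder in Time}, enter; the polynomial growth clause plays no role.

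First I would record a linear bound at $x=0$. Since $b(0,0,\delta_0)\in\bR^d$ and $\sigma(0,0,\delta_0)\in\bR^{d\times l}$ are fixed, hence finite, the triangle inequality, the Lipschitz-in-law estimates and the $1/2$-H\"older continuity in time give
\[
|b(t,0,\nu)|\le L\,W^{(2)}(\nu,\delta_0)+C t^{1/2}+|b(0,0,\delta_0)|\le C\big(1+W^{(2)}(\nu,\delta_0)\big),
\]
and likewise $|\sigma(t,0,\nu)|\le C(1+W^{(2)}(\nu,\delta_0))$. For the diffusion estimate, adding and subtracting $\sigma(t,0,\nu)$ and using that $\sigma$ is Lipschitz yields $|\sigma(t,x,\nu)|\le L_\sigma|x|+C(1+W^{(2)}(\nu,\delta_0))$; squaring gives $|\sigma(t,x,\nu)|^2\le C(1+|x|^2+W^{(2)}(\nu,\delta_0)^2)$, which after substitution is the second assertion.

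For the drift estimate, note that $\langle x,b_M(t,x,\nu)\rangle=\langle x,b(t,x,\nu)\rangle/(1+M^{-\alpha}|b(t,x,\nu)|)$ with denominator $\ge 1$: if $\langle x,b(t,x,\nu)\rangle\le 0$ the left-hand side is $\le 0$ and nothing is needed, while if $\langle x,b(t,x,\nu)\rangle>0$ then $\langle x,b_M(t,x,\nu)\rangle\le\langle x,b(t,x,\nu)\rangle$. So it suffices to bound $\langle x,b(t,x,\nu)\rangle$ from above, and writing $\langle x,b(t,x,\nu)\rangle=\langle x-0,\,b(t,x,\nu)-b(t,0,\nu)\rangle+\langle x,b(t,0,\nu)\rangle$ the one-sided Lipschitz condition controls the first term by $L_b|x|^2$ while Cauchy--Schwarz and the linear bound at $0$ control the second by $|x|\,C(1+W^{(2)}(\nu,\delta_0))$; Young's inequality then gives $\langle x,b(t,x,\nu)\rangle\le C(1+|x|^2+W^{(2)}(\nu,\delta_0)^2)$. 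Substituting $x=X^{i,N,M}_{t}$, $\nu=\mu^{X,N,M}_{t}$ and using the empirical-second-moment identity finishes both parts.

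I do not expect a genuine obstacle here, as this is a structural lemma, but two points need care: (i) choosing $\delta_0$ as the reference measure, which is exactly what turns $W^{(2)}(\nu,\delta_0)^2$ into the empirical second moment $\frac1N\sum_{j=1}^N|X^{j,N,M}_{t}|^2$ on the right-hand side (in particular no uniform-in-$\mu$ boundedness of the coefficients is required, unlike in Assumption \ref{Ass:Extra Implicit Bounds}); and (ii) the sign case distinction for the taming denominator, which is what allows a merely one-sided Lipschitz bound on $b$, rather than a two-sided one, to transfer to the tamed drift $b_M$.
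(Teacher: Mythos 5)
Your proof is correct, and the treatment of the diffusion coefficient is essentially the same as the paper's (add and subtract at $(0,\delta_0)$, use Lipschitz in space and measure, use H\"older in time, square). For the drift bound, however, you take a genuinely different and somewhat cleaner route. The paper first establishes approximate structural properties of the tamed drift $b_M$ itself --- an approximate one-sided Lipschitz inequality $\langle x-x', b_M(t,x,\mu)-b_M(t,x',\mu)\rangle \le (L_b+1)|x-x'|^2 + 1$ (proved by splitting the difference of ratios and handling the case $|b(t,x,\mu)| \ge |b(t,x',\mu)|$ without loss of generality) and an approximate measure-Lipschitz bound $|b_M(t,x,\mu)-b_M(t,x,\mu')| \le L W^{(2)}(\mu,\mu') + 1$ --- and then applies these with $x'=0$, $\mu'=\delta_0$, also invoking $|b_M(t,0,\delta_0)|\le |b(t,0,\delta_0)|$. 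You instead observe that $\langle x, b_M(t,x,\nu)\rangle = \langle x, b(t,x,\nu)\rangle/(1+M^{-\alpha}|b(t,x,\nu)|)$ with denominator $\ge 1$, so by a sign case distinction $\langle x, b_M(t,x,\nu)\rangle \le \max\{0,\langle x, b(t,x,\nu)\rangle\}$; this reduces the problem immediately to bounding $\langle x, b(t,x,\nu)\rangle$ for the untamed $b$, which falls straight out of the one-sided Lipschitz property and the linear bound at the origin. Your route avoids the extra algebraic manipulation of the ratio and the spurious ``$+1$'' constants in the paper's intermediate bounds, at the cost of not producing those intermediate properties of $b_M$ as reusable facts (the paper does not in fact reuse them elsewhere, so nothing is lost). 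Both arguments hinge on the same two observations you flagged: taking $\delta_0$ as the reference measure so that $W^{(2)}(\mu^{X,N,M}_t,\delta_0)^2 = \frac{1}{N}\sum_{j=1}^N|X^{j,N,M}_t|^2$, and exploiting the taming structure to transfer a one-sided bound on $b$ to $b_M$.
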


\begin{proof}
	First, observe for $x,x' \in \bR^d$ and $\mu \in \cP ( \bR^d )$ that
	\begin{align*}
	\langle x - x', b_M \left( t, x, \mu \right) - b_M (t, x', \mu ) \rangle
	\hspace*{-4cm} & \\
	&= 
	\frac{ \langle x-x', b (t, x, \mu ) - b (t,x',\mu) \rangle }{ 1 + M^{-\alpha} \vert b (t,x,\mu ) \vert } 
		+ \langle x-x', \frac{ b ( t,x',\mu ) ( \vert b(t,x,\mu) \vert - \vert b(t,x',\mu) ) \vert }{ ( M^\alpha + \vert b (t,x,\mu) \vert) ( M^\alpha + \vert b (t,x',\mu) \vert) } \rangle
	\\ &
	\leq \frac{ \langle x-x', b (t, x, \mu ) - b (t,x',\mu) \rangle }{ 1 + M^{-\alpha} \vert b (t,x,\mu ) \vert } 
		+ \vert x-x' \vert^2 
		+ \left\vert \frac{ \vert b (t,x',\mu ) \vert^2 - \vert b (t,x,\mu) \vert \vert b (t,x',\mu ) \vert }{ \vert b (t,x,\mu ) \vert \vert b (t,x',\mu ) \vert } \right\vert^2
		.
	\end{align*}
	Assuming without loss of generality (otherwise just switch $x$ and $x'$) that $\vert b (t,x,\mu)\vert \geq \vert b (t,x',\mu) \vert$ we get by Assumption \ref{Ass:Monotone Assumption}
	\begin{align*}
	\langle x - x', b_M \left( t, x, \mu \right) - b_M (t, x', \mu ) \rangle
	\leq (L_b + 1) \vert x- x' \vert^2 + 1
	.
	\end{align*}
	Similarily we obtain for all $x \in \bR^d$ and $\mu,\mu' \in \cP ( \bR^d )$
	\begin{align*}
	\vert b_M (t,x,\mu) - b_M ( t,x,\mu') \vert
	\leq 
	\vert b (t,x,\mu) - b (t,x,\mu') \vert + 1
	\leq L W^{(2)} (\mu, \mu') + 1
	.
	\end{align*}
	Using this, we have
	\begin{align*}
		\langle X^{i,N,M}_{t}, b_M \left( t, X^{i,N,M}_{t}, \mu^{X,N,M}_{t} \right) \rangle
		\hspace*{-2.2cm} & \\ &
		=
		\langle X^{i,N,M}_{t} - 0, b_M \left( t, X^{i,N,M}_{t}, \mu^{X,N,M}_{t} \right) - b_M \left( t, 0, \delta_0 \right) \rangle + \langle X^{i,N,M}_{t} , b_M \left( t, 0, \delta_0 \right) \rangle
		\\ &
		\leq L_b \vert X^{i,N,M}_{t} \vert^2 + 2 \vert X^{i,N,M}_{t} \vert^2 + L W^{(2)} \left( \mu^{X,N,M}_{t}, \delta_0 \right)^2 + 1 + \vert X^{i,N,M}_{t} \vert^2 + \left\vert b_M \left( t, 0, \delta_0 \right) \right\vert^2
		\\ &
		\leq C \Big( 1 + \vert X^{i,N,M}_{t} \vert^2 + \frac{1}{N} \sum_{j=1}^N \vert X^{j,N,M}_{t} \vert^2 \Big)
	\end{align*}
	by the 1/2-Hölder-continuity in Assumption \ref{Ass:Holder in Time}. Again with Assumption~\ref{Ass:Monotone Assumption} and \ref{Ass:Holder in Time} we get
	\begin{align*}
		\left\vert \sigma \left( t, X^{i,N,M}_{t}, \mu^{X,N,M}_{t} \right) \right\vert^{2}
		&
		\leq \left\vert \sigma \left( t, X^{i,N,M}_{t}, \mu^{X,N,M}_{t} \right) - \sigma \left( t, 0, \delta_0 \right) \right\vert^{2} + \big\vert \sigma \left( t, 0, \delta_0 \right) \big\vert^{2}
		\\ &
		\leq L_\sigma \left( \vert X^{i,N,M}_{t} \vert^2 + W^{(2)} \left( \mu^{X,N,M}_{t}, \delta_0 \right)^2 \right) + \big\vert \sigma \left( t, 0, \delta_0 \right) \big\vert^{2}
		\\&
		\leq C \left( 1 + \vert X^{i,N,M}_{t} \vert^2 + \frac{1}{N} \sum_{j=1}^N \vert X^{j,N,M}_{t} \vert^2 \right)
		.
	\end{align*}
\end{proof}

\begin{lemma}
	\label{Lemma:supEXMsmallerC}
	Suppose Assumption \ref{Ass:Monotone Assumption} and \ref{Ass:Holder in Time} are fulfilled and $X_{0} \in L^{2}(\bR^{d})$, then there exists a constant $C$ which is independent of $N$ and $h$ {(or $M$ with $h=T/M$) } such that
	\begin{equation*}
	\sup_{h>0} \sup_{1 \leq i \leq N} \sup_{0\leq t \leq T} \bE \left[ \vert X^{i,N,M}_t \vert^2 \right] < C.
	\end{equation*}
\end{lemma}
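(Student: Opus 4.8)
The plan is to run an Itô--Grönwall estimate for $t\mapsto |X^{i,N,M}_t|^2$ in which every constant is traced back to Lemma~\ref{lemma:EstItoPartial} and to the taming exponent $\alpha\in(0,1/2]$, so that no constant depends on $N$ or $M$. As a preliminary (and for fixed $N,M$), note from \eqref{Eq:Time Continuous Tamed} that on each $[t_k,t_{k+1}]$ the scheme reads $X^{i,N,M}_t=X^{i,N,M}_{t_k}+b_M(\cdot)(t-t_k)+\sigma(\cdot)(W^i_t-W^i_{t_k})$ with $\cF_{t_k}$-measurable coefficients, where $|b_M(\cdot)|\le M^\alpha$ is bounded and $|\sigma(\cdot)|$ has linear growth by Lemma~\ref{lemma:EstItoPartial}; hence, using exchangeability of the particles and $X_0\in L^2(\bR^d)$, an induction over $k$ shows $\sup_{0\le t\le T}\bE[|X^{i,N,M}_t|^2]<\infty$. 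This bound is allowed to depend on $M$ at this stage; its only role is to make the stochastic integral below a genuine martingale and to legitimise Grönwall's lemma.

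Applying Itô's formula,
\begin{align*}
|X^{i,N,M}_t|^2
&=|X^i_0|^2+\int_0^t 2\big\langle X^{i,N,M}_s,\,b_M\big(\kappa(s),X^{i,N,M}_{\kappa(s)},\mu^{X,N,M}_{\kappa(s)}\big)\big\rangle\,\dd s\\
&\quad+\int_0^t 2\big\langle X^{i,N,M}_s,\,\sigma\big(\kappa(s),X^{i,N,M}_{\kappa(s)},\mu^{X,N,M}_{\kappa(s)}\big)\,\dd W^i_s\big\rangle
+\int_0^t \big|\sigma\big(\kappa(s),X^{i,N,M}_{\kappa(s)},\mu^{X,N,M}_{\kappa(s)}\big)\big|^2\,\dd s.
\end{align*}
The stochastic integral has zero expectation by the preliminary step. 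The decisive step is to write $X^{i,N,M}_s=X^{i,N,M}_{\kappa(s)}+(X^{i,N,M}_s-X^{i,N,M}_{\kappa(s)})$ inside the drift bracket: the term $\langle X^{i,N,M}_{\kappa(s)},b_M(\kappa(s),X^{i,N,M}_{\kappa(s)},\mu^{X,N,M}_{\kappa(s)})\rangle$ is handled by the first estimate of Lemma~\ref{lemma:EstItoPartial}, while, since $X^{i,N,M}_s-X^{i,N,M}_{\kappa(s)}=b_M(\cdot)(s-\kappa(s))+\sigma(\cdot)(W^i_s-W^i_{\kappa(s)})$,
\begin{align*}
&2\big\langle X^{i,N,M}_s-X^{i,N,M}_{\kappa(s)},\,b_M(\cdot)\big\rangle\\
&\qquad=2\,|b_M(\cdot)|^2\,(s-\kappa(s))+2\big\langle \sigma(\cdot)\,(W^i_s-W^i_{\kappa(s)}),\,b_M(\cdot)\big\rangle.
\end{align*}
Here taming is exactly what is needed: $|b_M(\cdot)|\le M^\alpha$ and $s-\kappa(s)\le h=T/M$ give $|b_M(\cdot)|^2(s-\kappa(s))\le TM^{2\alpha-1}\le T$ because $\alpha\le 1/2$, a constant independent of $M$; and the cross term has zero conditional expectation given $\cF_{\kappa(s)}$, as $\sigma(\cdot)$ and $b_M(\cdot)$ are $\cF_{\kappa(s)}$-measurable and $W^i_s-W^i_{\kappa(s)}$ is centred and independent of $\cF_{\kappa(s)}$.

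Taking expectations, bounding the remaining terms by the two estimates of Lemma~\ref{lemma:EstItoPartial} (the factor $l$ relating the Hilbert--Schmidt norm in the Itô correction to the operator norm there is absorbed into $C$), and using that the particles are exchangeable so that $\frac1N\sum_{j=1}^N\bE[|X^{j,N,M}_{\kappa(s)}|^2]=\bE[|X^{i,N,M}_{\kappa(s)}|^2]$ (the identity recorded just before Lemma~\ref{lemma:EstItoPartial}), one obtains
\begin{align*}
\bE[|X^{i,N,M}_t|^2]
&\le \bE[|X^i_0|^2]+CT+C\int_0^t \bE[|X^{i,N,M}_{\kappa(s)}|^2]\,\dd s\\
&\le \bE[|X^i_0|^2]+CT+C\int_0^t \sup_{0\le u\le s}\bE[|X^{i,N,M}_u|^2]\,\dd s,
\end{align*}
with $C$ depending only on the constants of Assumptions~\ref{Ass:Monotone Assumption}--\ref{Ass:Holder in Time} and on $T,d,l$. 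Writing $\phi(t):=\sup_{0\le u\le t}\bE[|X^{i,N,M}_u|^2]$, which is finite by the preliminary step and independent of $i$, the right-hand side is nondecreasing in $t$, so $\phi(t)\le \bE[|X^i_0|^2]+CT+C\int_0^t\phi(s)\,\dd s$, and Grönwall's inequality yields $\phi(T)\le(\bE[|X^i_0|^2]+CT)e^{CT}$: a bound independent of $M$, $N$ and $i$, which is the claim.

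The one delicate point — and the exact place where this argument would fail for the untamed Euler scheme — is the term $2|b_M(\cdot)|^2(s-\kappa(s))$ created by the off-grid evaluation of the drift: its $M$-uniform boundedness relies on combining the taming bound $|b_M|\le M^\alpha$ with the restriction $\alpha\le 1/2$, whereas for the plain Euler scheme $|b|$ is only bounded by a superlinear polynomial in $|X^{i,N,M}_{\kappa(s)}|$ and no such bound survives. Everything else is the routine Grönwall closure, made independent of $N$ by the exchangeability identity.
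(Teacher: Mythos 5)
Your proof is correct and follows essentially the same strategy as the paper's: apply Itô's formula, split $X^{i,N,M}_s = X^{i,N,M}_{\kappa(s)} + (X^{i,N,M}_s - X^{i,N,M}_{\kappa(s)})$ in the drift bracket, invoke Lemma~\ref{lemma:EstItoPartial} for the on-grid piece, kill the $\sigma(\cdot)(W^i_s-W^i_{\kappa(s)})$ cross term by conditioning on $\cF_{\kappa(s)}$, and bound $|b_M|^2(s-\kappa(s))\le TM^{2\alpha-1}\le T$ using the taming and $\alpha\le 1/2$, before closing with exchangeability and Grönwall. The preliminary $M$-dependent finiteness step you add to justify the martingale property and Grönwall is a sound piece of rigour the paper leaves implicit.
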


\begin{proof}
	Applying It\^o's formula and restructuring the terms gives
	\begin{align*}
		\left\vert X^{i,N,M}_{t} \right\vert^2 
		=& 
		\left\vert X^i_0 \right\vert ^2 
		+ \int_0^t 2 \langle X^{i,N,M}_{\kappa(s)}, b_M \left( \kappa(s), X^{i,N,M}_{\kappa(s)}, \mu^{X,N,M}_{\kappa(s)} \right) \rangle  
		+ \sum_{a=1}^{l} \left\vert \sigma_{a} \left( \kappa(s), X^{i,N,M}_{\kappa(s)}, \mu^{X,N,M}_{\kappa(s)} \right) \right\vert^{2} \dd s 
		\\
		& + \int_{0}^{t} 2 \langle X^{i,N,M}_{s}, \sigma \left( \kappa(s), X^{i,N,M}_{\kappa(s)}, \mu^{X,N,M}_{\kappa(s)} \right) \dd W_{s}^{i} \rangle 
		\\
		& + \int_0^t 2 \langle X^{i,N,M}_{s} - X^{i,N,M}_{\kappa(s)}, b_M \left( \kappa(s), X^{i,N,M}_{\kappa(s)}, \mu^{X,N,M}_{\kappa(s)} \right) \rangle \dd s.
	\end{align*}
	We start with the expectations of the last term (using $\vert s-\kappa(s)\vert {\leq h=T/M}$ and $\alpha\in(0,1/2]$) 
	\begin{align*}
		& \left\vert \bE \left[ \int_0^t \langle X^{i,N,M}_s - X^{i,N,M}_{\kappa(s)}, b_M \left( \kappa(s), X^{i,N,M}_{\kappa(s)}, \mu^{X,N,M}_{\kappa(s)} \right) \rangle \dd s \right] \right\vert 
		\\
		& = 
		\Big\vert \bE \Big[ \int_0^t \langle \int_{\kappa(s)}^s b_M \left( \kappa(r), X^{i,N,M}_{\kappa(r)}, \mu^{X,N,M}_{\kappa(r)} \right) \dd r 
		\\
		&
		\qquad + \int_{\kappa(s)}^s \sigma \left( \kappa(r), X^{i,N,M}_{\kappa(r)}, \mu^{X,N,M}_{\kappa(r)} \right) \dd W^i_r ,   
		b_M \left( \kappa(s), X^{i,N,M}_{\kappa(s)}, \mu^{X,N,M}_{\kappa(s)} \right) \rangle \dd s \Big] \Big\vert 
		\\
		& = \Big\vert \sum_{k=0}^{M-1}
		\int_{t_{k}}^{t_{k+1}} \1_{ \{s \leq t \} } \bE \Big[ \Big\langle b_M \Big( \kappa(s), X^{i,N,M}_{\kappa(s)}, \mu^{X,N,M}_{\kappa(s)} \Big),  
		\\
		& \hspace*{1cm}  \bE \Big[  
		\int_{t_{k}}^s b_M \Big( \kappa(r), X^{i,N,M}_{\kappa(r)}, \mu^{X,N,M}_{\kappa(r)} \Big) \dd r 
		+
		\int_{t_{k}}^s \sigma \Big( \kappa(r), X^{i,N,M}_{\kappa(r)}, \mu^{X,N,M}_{\kappa(r)} \Big) \dd W^i_r 
		\Big\vert \cF_{t_{k}} \Big] \Big\rangle \Big] \dd s \Big\vert 
		\\
		& \leq \bE \left[ \int_0^t \left\vert b_M \left( \kappa(s), X^{i,N,M}_{\kappa(s)}, \mu^{X,N,M}_{\kappa(s)} \right) \right\vert \int_{\kappa(s)}^s \left\vert b_M \left( \kappa(r), X^{i,N,M}_{\kappa(r)}, \mu^{X,N,M}_{\kappa(r)} \right) \right\vert \dd r \ \dd s \right] 
		\\
		& \leq {C h^{1-2\alpha}}
		\\
		&\leq {C. }
	\end{align*}
	Putting this together and using Lemma \ref{lemma:EstItoPartial} we obtain
	\begin{align*}
		\bE \big[ \big\vert X^{i,N,M}_{t} \big\vert^2 \big]
		& \leq \bE \big[ \big\vert X^i_0 \big\vert^2 \big] 
		+ C \Big( 1 + \bE \Big[ \int_0^t \left\vert X^{i,N,M}_{\kappa(s)} \right\vert^2 + \frac{1}{N} \sum_{j=1}^N \left\vert X^{j,N,M}_{\kappa(s)} \right\vert^2 \dd s \Big] \Big) 
		\\
		& \leq \bE \big[ \vert X^i_0 \vert^2 \big] 
		+ C \Big( 1 + \int_0^t \sup_{0\leq u \leq s} \bE \Big[ \left\vert X^{i,N,M}_{u} \right\vert^2 + \frac{1}{N} \sum_{j=1}^N \left\vert X^{j,N,M}_{u} \right\vert^2 \Big] \dd s  \Big) ,
	\end{align*}
	which furthermore yields
	\begin{align*}
		\sup_{1 \leq i \leq N} \sup_{0 \leq u \leq t} &\bE \left[ \left\vert X^{i,N,M}_{u} \right\vert^2 \right]
		\leq C \left( 1 + \bE \left[ \left\vert X_0 \right\vert^2 \right] + \int_0^t \sup_{1 \leq i \leq N} \sup_{0 \leq u \leq s} \bE \left[ \left\vert X^{i,N,M}_{u} \right\vert^2 \right] \dd s \right) < \infty,
	\end{align*}
	and hence by Gr\"onwall's lemma
	\begin{equation*}
		\sup_{1 \leq i \leq N} \sup_{0 \leq u \leq t} \bE \left[ \left\vert X^{i,N,M}_{u} \right\vert^2 \right] < C,
	\end{equation*}
	where $C$ is a constant which is independent of $N$ and $h$ (and $M$).
\end{proof}

\begin{lemma}
	\label{Lemma:XtMinusKappaLeqCM}
	If Assumption \ref{Ass:Monotone Assumption} and \ref{Ass:Holder in Time} are fulfilled and $X_{0} \in L^{2}(\bR^{d})$, then for all $p \in (0,2]$ we have
	\begin{equation}
	\label{Eq:XtMinusXkappatLeqCM}
	\sup_{1 \leq i \leq N} \sup_{0\leq t \leq T} 
	\bE \left[ \left\vert X^{i,N,M}_t - X^{i,N,M}_{\kappa(t)} \right\vert^p \right] 
	\leq {C h^{p/2}},
	\end{equation}
	and
	\begin{equation}
	\label{Eq:XminusXtimesBMLeqC}
	\sup_{1 \leq i \leq N} \sup_{0\leq t \leq T} \bE \left[ \left\vert X^{i,N,M}_t - X^{i,N,M}_{\kappa(t)} \right\vert^p \left\vert b_M \left( \kappa(t), X^{i,N,M}_{\kappa(t)}, \mu^{X,N,M}_{\kappa(t)} \right) \right\vert^p \right] \leq C,
	\end{equation}
	where $C$ is a positive constant independent of  $N$ and $h$ (and $M$). 
	Furthermore, if for $p>2$ (with $h=T/M$)
	\begin{equation*}
	\sup_{h >0 } \sup_{1 \leq i \leq N} \bE \left[ \sup_{0 \leq t \leq T} \left\vert X^{i,N,M}_t \right\vert^p \right] < \infty,
	\end{equation*}
	then the estimates \eqref{Eq:XtMinusXkappatLeqCM} and \eqref{Eq:XminusXtimesBMLeqC} hold for those $p$ as well.
\end{lemma}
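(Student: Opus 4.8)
The plan is to read off the one-step increment directly from the continuous-time scheme \eqref{Eq:Time Continuous Tamed}: for $t\in[0,T]$,
\[
X^{i,N,M}_t - X^{i,N,M}_{\kappa(t)} = I^{(1)}_t + I^{(2)}_t,
\]
with $I^{(1)}_t := \int_{\kappa(t)}^t b_M\big(\kappa(s),X^{i,N,M}_{\kappa(s)},\mu^{X,N,M}_{\kappa(s)}\big)\,\dd s$ and $I^{(2)}_t := \int_{\kappa(t)}^t \sigma\big(\kappa(s),X^{i,N,M}_{\kappa(s)},\mu^{X,N,M}_{\kappa(s)}\big)\,\dd W^i_s$, and to bound the two pieces separately. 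The decisive structural fact, already noted below \eqref{Eq:Time Continuous Tamed}, is $|b_M(t,x,\nu)|\le\min(M^\alpha,|b(t,x,\nu)|)$; since $\alpha\le 1/2$ this gives the \emph{deterministic} bound $|b_M|\le M^{1/2}$, which disposes of the drift term and, as explained below, makes the second estimate an immediate consequence of the first.

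\textbf{The case $p\in(0,2]$.} Pathwise, $|I^{(1)}_t|\le (t-\kappa(t))M^{1/2}\le T M^{-1/2}$ because $|s-\kappa(s)|\le T/M$. For the diffusion, It\^o's isometry together with Lemmas \ref{lemma:EstItoPartial} and \ref{Lemma:supEXMsmallerC} gives $\bE[|I^{(2)}_t|^2]=\bE\big[\int_{\kappa(t)}^t\sum_{a=1}^l|\sigma_a(\kappa(s),X^{i,N,M}_{\kappa(s)},\mu^{X,N,M}_{\kappa(s)})|^2\,\dd s\big]\le C(t-\kappa(t))\le C M^{-1}$, uniformly in $i$, $N$, $M$. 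Combining the two via $|a+b|^2\le 2|a|^2+2|b|^2$ yields \eqref{Eq:XtMinusXkappatLeqCM} for $p=2$; for $p\in(0,2)$ it follows from Jensen's inequality $\bE[|Z|^p]\le\bE[|Z|^2]^{p/2}$, uniformly in $i$ and $t$.

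\textbf{The second estimate.} Since $|b_M(\kappa(t),X^{i,N,M}_{\kappa(t)},\mu^{X,N,M}_{\kappa(t)})|^p\le M^{p\alpha}\le M^{p/2}$ holds pathwise,
\[
\bE\Big[\big|X^{i,N,M}_t-X^{i,N,M}_{\kappa(t)}\big|^p\,\big|b_M(\kappa(t),X^{i,N,M}_{\kappa(t)},\mu^{X,N,M}_{\kappa(t)})\big|^p\Big]\le M^{p/2}\,\bE\big[\big|X^{i,N,M}_t-X^{i,N,M}_{\kappa(t)}\big|^p\big]\le M^{p/2}\cdot C M^{-p/2}=C,
\]
uniformly in $N,M,i,t$, which is \eqref{Eq:XminusXtimesBMLeqC}. (Equivalently one may split $I^{(1)}$ and $I^{(2)}$: the drift contributes $(t-\kappa(t))M^{2\alpha}\le T M^{2\alpha-1}\le T$ since $\alpha\le 1/2$, and the diffusion part is handled by It\^o's isometry as above after pulling out the $\cF_{\kappa(t)}$-measurable factor $|b_M(\kappa(t),\cdot,\cdot)|\le M^{1/2}$.)

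\textbf{The case $p>2$ and the main difficulty.} Under the additional moment hypothesis the same decomposition works with Burkholder--Davis--Gundy replacing It\^o's isometry: $|I^{(1)}_t|^p\le (T M^{-1/2})^p$ pathwise, while $\bE[|I^{(2)}_t|^p]\le C_p\,\bE\big[\big(\int_{\kappa(t)}^t\sum_{a=1}^l|\sigma_a(\kappa(s),\cdot,\cdot)|^2\,\dd s\big)^{p/2}\big]$; a H\"older (Jensen) step in the time integral, legitimate because $p/2\ge1$, bounds this by $(t-\kappa(t))^{p/2-1}\int_{\kappa(t)}^t\bE[|\sigma(\kappa(s),\cdot,\cdot)|^p]\,\dd s$, and Lemma \ref{lemma:EstItoPartial} raised to the power $p/2$ together with the assumed uniform bound $\sup_{M}\sup_{i}\bE[\sup_{t\le T}|X^{i,N,M}_t|^p]<\infty$ gives $\bE[|\sigma(\kappa(s),\cdot,\cdot)|^p]\le C$; hence $\bE[|I^{(2)}_t|^p]\le C M^{-p/2}$ and \eqref{Eq:XtMinusXkappatLeqCM} follows, with \eqref{Eq:XminusXtimesBMLeqC} again immediate from $|b_M|\le M^{1/2}$. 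I do not anticipate a genuine obstacle here: the only points requiring care are the bookkeeping in the $p>2$ case (the order of H\"older and conditional expectation when moving the $p/2$-th power inside the time integral, and the invocation of the extra moment assumption), and keeping in mind throughout that the taming denominator supplies the deterministic bound $|b_M|\le M^\alpha\le M^{1/2}$, which renders both the drift contribution and all of \eqref{Eq:XminusXtimesBMLeqC} essentially free.
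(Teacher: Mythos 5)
Your proposal is correct and follows essentially the same route as the paper: the pathwise bound $|b_M|\le M^{\alpha}\le M^{1/2}$ for the drift increment, It\^o's isometry (resp.\ Burkholder--Davis--Gundy plus the assumed $p$-th supremum moment for $p>2$) combined with Lemmas \ref{lemma:EstItoPartial} and \ref{Lemma:supEXMsmallerC} for the diffusion increment, Jensen to pass to $p<2$, and the deterministic taming bound $|b_M|^p\le M^{p\alpha}$ to deduce \eqref{Eq:XminusXtimesBMLeqC} from \eqref{Eq:XtMinusXkappatLeqCM}. No gaps.
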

\begin{proof}[Proof of Lemma \ref{Lemma:XtMinusKappaLeqCM}]
	We obtain for any $p \geq 2$
	\begin{align}
		&	\left\vert \int_{\kappa(t)}^t b_M \left( \kappa(s), X^{i,N,M}_{\kappa(s)}, \mu^{X,N,M}_{\kappa(s)} \right) \dd s \right\vert^p
		\leq {T^{p/2} h^{p/2}}, \label{eq:est_int_bM}
	\end{align}
	since $\vert b_M \vert \leq M^\alpha {= T^\alpha h^{-\alpha}}$ and $\alpha \leq 1/2$.
	It is easy to see that in the case of $p \in (0,2]$
	\begin{align*}
		&\bE \left[ \left \vert X^{i,N,M}_t - X^{i,N,M}_{\kappa(t)} \right\vert^p \right] 
		\\
		&  
		\leq
		\bE \Big[ \Big\vert \int_{\kappa(t)}^t b_M \big( \kappa(s), X^{i,N,M}_{\kappa(s)}, \mu^{X,N,M}_{\kappa(s)} \big) \dd s + \int_{\kappa(t)}^t \sigma \big( \kappa(s), X^{i,N,M}_{\kappa(s)}, \mu^{X,N,M}_{\kappa(s)} \big) \dd W^i_s \Big\vert^2 \Big]^{\frac{p}{2}} 
		\\
		& \leq
		2^{p/2} \bE \Big[ \Big\vert \int_{\kappa(t)}^t b_M \big( \kappa(s), X^{i,N,M}_{\kappa(s)}, \mu^{X,N,M}_{\kappa(s)} \big) \dd s \Big\vert^2 
		+ \Big\vert \int_{\kappa(t)}^t \sigma \big( \kappa(s), X^{i,N,M}_{\kappa(s)}, \mu^{X,N,M}_{\kappa(s)} \big) \dd W^i_s \Big\vert^2 \Big]^{\frac{p}{2}},
	\end{align*}
	and due to It\^o's isometry, Lemma \ref{lemma:EstItoPartial} and Lemma \ref{Lemma:supEXMsmallerC} for $C$ independent of $h$ (and $M$) and $i$
	\begin{align*}
		&
		\bE \left[ \left\vert \int_{\kappa(t)}^t \sigma \left( \kappa(s), X^{i,N,M}_{\kappa(s)}, \mu^{X,N,M}_{\kappa(s)} \right) \dd W^i_s \right\vert^2 \right]
		\\
		& \leq \bE \left[ \int_{\kappa(t)}^t K \left( 1 + \left\vert X^{i,N,M}_{\kappa(s)} \right\vert^2 + \frac{1}{N} \sum_{j=1}^N \left\vert X^{j,N,M}_{\kappa(s)} \right\vert^2 \right) \dd s \right] \\
		& \leq \sup_{1 \leq i \leq N} \sup_{s \in [\kappa(t),t]} 
		\bE \left[ {h} 
		K \left( 1 + \left\vert X^{i,N,M}_s \right\vert^2 + \left\vert X^{i,N,M}_{s} \right\vert^2 \right) \right] 
		\leq {C h} 
	\end{align*}
	which gives, combined with \eqref{eq:est_int_bM}, that
	\begin{equation*}
		\sup_{0\leq t \leq T} \bE \left[ \left\vert X^{i,N,M}_t - X^{i,N,M}_{\kappa(t)} \right\vert^p \right] \leq C { h^{p/2}},
\quad \textrm{for all $p\in (0,2]$.}
	\end{equation*}
	If additionally 
	$\sup_{M \geq 1} \sup_{1 \leq i \leq N} \bE \big[ \sup_{0 \leq t \leq T} \vert X^{i,N,M}_t \vert^p \big] < \infty$
	for some $p>2$, then (using \eqref{eq:est_int_bM})
	\begin{align*}
		&
		\bE \left[ \left \vert X^{i,N,M}_t - X^{i,N,M}_{\kappa(t)} \right\vert^p \right] 
		\\
		&   
		\leq
		C \bE \Bigg[ \Big\vert \int_{\kappa(t)}^t \hspace*{-0.05cm} b_M \big( \kappa(s), X^{i,N,M}_{\kappa(s)}, \mu^{X,N,M}_{\kappa(s)} \big) \dd s \Big\vert^p 
		+ 
		\Big\vert \int_{\kappa(t)}^t \hspace*{-0.05cm} \sigma \big( \kappa(s), X^{i,N,M}_{\kappa(s)}, \mu^{X,N,M}_{\kappa(s)} \big) \dd W^i_s \Big\vert^p \Bigg] 
		\\
		&   \leq 
		C \bE \left[ 
		{ T^{p/2} h^{p/2}} 
			+ \Big\vert \int_{\kappa(t)}^t \sigma \big( \kappa(s), X^{i,N,M}_{\kappa(s)}, \mu^{X,N,M}_{\kappa(s)} \big)^2 \dd s \Big\vert^{p/2} \right],
	\end{align*}
	by the estimate \eqref{eq:est_int_bM} and the Burkholder-Davis-Gundy inequality. Since furthermore,
	\begin{align*}
		& \bE \left[ \left\vert \int_{\kappa(t)}^t \sigma \left( \kappa(s), X^{i,N,M}_{\kappa(s)}, \mu^{X,N,M}_{\kappa(s)} \right)^2 \dd s \right\vert^{p/2} \right] \\
		& \hspace*{1cm} \leq \bE \left[ 
		{h^{p/2}} 
		\sup_{s \in [\kappa(t),t]} K \left( 1 + \left\vert X^{i,N,M}_{s} \right\vert^p + \left( \frac{1}{N} \sum_{j=1}^N \left\vert X^{j,N,M}_s \right\vert ^2 \right)^{p/2} \right) \right]  \\
		& \hspace*{1cm} 
		\leq { h^{p/2}} 
		K \left( 1 + \bE \left[ \sup_{0 \leq t \leq T} \left\vert X^{i,N,M}_t \right\vert^p \right] + \sup_{1 \leq j \leq N} \bE \left[ \sup_{0 \leq t \leq T} \left\vert X^{j,N,M}_t \right\vert^p \right] \right) \\
		& \hspace*{1cm} \leq C{ h^{ p/2} },
	\end{align*}
	we get the desired result here as well.
	
		Finally, using the above results and that $\alpha \le 1/2$, we obtain for any $p \ge 0$ for which \linebreak $\bE [ \vert X^{i,N,M}_t - X^{i,N,M}_{\kappa(t)} \vert^p ] \le C { h^{p/2}}$, that
	\begin{align*}
	&\bE \left[ \left\vert X^{i,N,M}_t - X^{i,N,M}_{\kappa(t)} \right\vert^p \left\vert b_M \big( \kappa(t), X^{i,N,M}_{\kappa(t)}, \mu^{X,N,M}_{\kappa(t)} \big) \right\vert^p \right]
	\leq \bE \left[ \left\vert X^{i,N,M}_t - X^{i,N,M}_{\kappa(t)} \right\vert^p \right] 
	{ T^{p\alpha} h^{-p\alpha} }
	\leq C,
	\end{align*}
	holds for any $t \in [0,T]$ and $1 \leq i \leq N$, which completes the proof.
\end{proof}

\begin{lemma}
	\label{Lemma:EsupXMsmallerC}
	Suppose that Assumption \ref{Ass:Monotone Assumption} and \ref{Ass:Holder in Time} are fulfilled, then for every $p \geq 2$ with $X_{0} \in L^{p}(\bR^{d})$ there exists a constant $C$ such that
	\begin{equation*}
	\sup_{h>0 } \sup_{1 \leq i \leq N} \bE \left[ \sup_{0\leq t \leq T} \left\vert X^{i,N,M}_t \right\vert^p \right] < C.
	\end{equation*}
\end{lemma}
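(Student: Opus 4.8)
The plan is to apply It\^o's formula to $t\mapsto|X^{i,N,M}_t|^p$ (for $p\ge2$ the map $x\mapsto|x|^p$ is twice continuously differentiable on $\bR^d$, with Hessian dominated by $C_p|x|^{p-2}$), then take $\sup_{0\le u\le t}$, then expectations, and finally close the resulting integral inequality with Gr\"onwall's lemma. A preliminary observation is that for each \emph{fixed} $M$ and $N$ the quantity $\bE[\sup_{0\le t\le T}|X^{i,N,M}_t|^p]$ is already finite, since $X^{i,N,M}$ is built recursively in finitely many steps out of the bounded drift $b_M$ (recall $|b_M|\le M^\alpha$), the linearly growing $\sigma$, and Gaussian increments; so the real content of the lemma is the uniformity of the bound in $M$ and $N$. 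I run the argument for a generic $p\ge2$, noting already that a mild circularity with Lemma \ref{Lemma:XtMinusKappaLeqCM} will force an induction on $p$, discussed at the end.

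After It\^o's formula, writing $X_s:=X^{i,N,M}_s$ and suppressing the frozen arguments $\big(\kappa(s),X^{i,N,M}_{\kappa(s)},\mu^{X,N,M}_{\kappa(s)}\big)$ inside $b_M$ and $\sigma$,
\begin{align*}
|X_t|^p = |X^i_0|^p
&+ \int_0^t p|X_s|^{p-2}\langle X_s,b_M(\kappa(s),\ldots)\rangle\,\dd s
\\
&+ \frac{p(p-1)}{2}\int_0^t |X_s|^{p-2}|\sigma(\kappa(s),\ldots)|^2\,\dd s
\\
&+ p\int_0^t |X_s|^{p-2}\langle X_s,\sigma(\kappa(s),\ldots)\,\dd W^i_s\rangle .
\end{align*}
In the first integral I split $\langle X_s,b_M(\kappa(s),\ldots)\rangle=\langle X_{\kappa(s)},b_M(\kappa(s),\ldots)\rangle+\langle X_s-X_{\kappa(s)},b_M(\kappa(s),\ldots)\rangle$. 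The diagonal summand $\langle X_{\kappa(s)},b_M(\kappa(s),\ldots)\rangle$, together with the integrand $|\sigma(\kappa(s),\ldots)|^2$ of the second integral, is bounded by $C\big(1+|X_{\kappa(s)}|^2+\tfrac1N\sum_j|X^{j,N,M}_{\kappa(s)}|^2\big)$ by Lemma \ref{lemma:EstItoPartial}; multiplying by $|X_s|^{p-2}$ and using Young's inequality together with $\big(\tfrac1N\sum_j a_j^2\big)^{p/2}\le\tfrac1N\sum_j a_j^p$ (valid since $p/2\ge1$) reduces these contributions to integrands $\le C\big(1+|X_s|^p+|X_{\kappa(s)}|^p+\tfrac1N\sum_j|X^{j,N,M}_{\kappa(s)}|^p\big)$.

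The two genuinely delicate pieces are the cross term $p|X_s|^{p-2}\langle X_s-X_{\kappa(s)},b_M(\kappa(s),\ldots)\rangle$ and the stochastic integral $M_t:=p\int_0^t|X_s|^{p-2}\langle X_s,\sigma(\kappa(s),\ldots)\,\dd W^i_s\rangle$. For the cross term I bound its absolute value by $p|X_s|^{p-2}\,|X_s-X_{\kappa(s)}|\,|b_M(\kappa(s),\ldots)|$ and apply Young's inequality with conjugate exponents $\tfrac{p}{p-2}$ and $\tfrac p2$, obtaining $C|X_s|^p+C\big(|X_s-X_{\kappa(s)}|\,|b_M(\kappa(s),\ldots)|\big)^{p/2}$, whose expectation is $\le C$ by estimate \eqref{Eq:XminusXtimesBMLeqC} of Lemma \ref{Lemma:XtMinusKappaLeqCM} applied at exponent $p/2$. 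For $M_t$, Burkholder--Davis--Gundy gives $\bE[\sup_{u\le t}|M_u|]\le C\,\bE\big[\big(\int_0^t|X_s|^{2p-2}|\sigma(\kappa(s),\ldots)|^2\,\dd s\big)^{1/2}\big]$; writing $|X_s|^{2p-2}=|X_s|^{p}|X_s|^{p-2}$ and pulling out the supremum yields $C\,\bE\big[(\sup_{u\le t}|X_u|^p)^{1/2}\big(\int_0^t|X_s|^{p-2}|\sigma(\kappa(s),\ldots)|^2\,\dd s\big)^{1/2}\big]$, and Young's inequality $ab\le\varepsilon a^2+C_\varepsilon b^2$ absorbs $\varepsilon\,\bE[\sup_{u\le t}|X_u|^p]$ into the left-hand side, the leftover being again of the type just handled via Lemma \ref{lemma:EstItoPartial}. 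Collecting the estimates, taking $\sup_{1\le i\le N}$ and using that the particles are identically distributed (so $\bE[\tfrac1N\sum_j|X^{j,N,M}_{\kappa(s)}|^p]=\bE[|X^{i,N,M}_{\kappa(s)}|^p]$), one arrives at
\begin{align*}
\sup_{1\le i\le N}\bE\Big[\sup_{0\le u\le t}|X^{i,N,M}_u|^p\Big]
&\le C\big(1+\bE[|X_0|^p]\big)
\\
&\quad+C\int_0^t\sup_{1\le i\le N}\bE\Big[\sup_{0\le u\le s}|X^{i,N,M}_u|^p\Big]\,\dd s ,
\end{align*}
with $C$ independent of $M$ and $N$; since the left-hand side is finite, Gr\"onwall's lemma concludes.

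\textbf{The main obstacle} is the cross term, and more precisely the circular-looking dependence it creates: estimate \eqref{Eq:XminusXtimesBMLeqC} of Lemma \ref{Lemma:XtMinusKappaLeqCM} is unconditional only for exponents in $(0,2]$, whereas above it is invoked at exponent $p/2$, which exceeds $2$ once $p>4$ --- and in that regime that very estimate presupposes exactly the conclusion of the present lemma (at exponent $p/2$). I would remove this by an induction on $p$ over the dyadic ranges $(2^k,2^{k+1}]$, $k\ge0$: for $p\in[2,4]$ only the unconditional part of Lemma \ref{Lemma:XtMinusKappaLeqCM} is needed; and once the bound has been established for all exponents $p'\le 2^k$ (recall $X_0\in L^p$ implies $X_0\in L^{p'}$ for $2\le p'\le p$ on a probability space), Lemma \ref{Lemma:XtMinusKappaLeqCM} becomes available at those exponents, so the argument above goes through verbatim for $p\in(2^k,2^{k+1}]$. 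The case $p=2$ is the base and is itself just the above reasoning, using only Lemma \ref{lemma:EstItoPartial}, Lemma \ref{Lemma:supEXMsmallerC} and the unconditional \eqref{Eq:XminusXtimesBMLeqC}.
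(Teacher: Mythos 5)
Your argument is correct and follows essentially the same route as the paper's: split the It\^o expansion into a ``diagonal'' part (controlled by Lemma~\ref{lemma:EstItoPartial}), a cross term $|X_s-X_{\kappa(s)}|\,|b_M|$ (controlled by Lemma~\ref{Lemma:XtMinusKappaLeqCM}), and a martingale (controlled by BDG plus Young absorption), then close with Gr\"onwall; and you correctly identify and resolve the same circularity via an induction that doubles the exponent at each step (the paper's $q=2p\wedge\hat p$ iteration). The only difference is cosmetic: you apply It\^o's formula to $|X|^p$ directly, whereas the paper applies it to $|X|^2$ and then raises the resulting pathwise inequality to the power $q/2$ via $|a+b|^{q/2}\le C(|a|^{q/2}+|b|^{q/2})$ and Jensen, which avoids handling the $p$-dependent Hessian term but is otherwise the same computation.
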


\begin{proof}
Define $\hat{p} \ge 2$ such that $\bE[|X_{0}|^{\hat{p}}] < \infty$ and note that for $p<2$ Lemma \ref{Lemma:XtMinusKappaLeqCM} yields immediately the result.

We use an inductive argument and start with $p = 2$. In every step we set $q= 2p \wedge \hat{p}$. 
By It\^o's formula and Lemma \ref{lemma:EstItoPartial} we have
\begin{align*}
\left\vert X^{i,N,M}_t \right\vert^2
\leq & \
\left\vert X^{i,N,M}_0 \right\vert^2 
+ \int_0^t \left\vert X^{i,N,M}_s - X^{i,N,M}_{\kappa(s)} \right\vert \left\vert b_M \big( \kappa(s), X^{i,N,M}_{\kappa(s)}, \mu^{X,N,M}_{\kappa(s)} \right\vert \dd s
\\ & 
+ \int_0^t C \Big( 1 + \left\vert X^{i,N,M}_{\kappa(s)}\right\vert^2 + \frac{1}{N} \sum_{j=1}^N \left\vert X^{j,N,M}_{\kappa(s)}\right\vert^2 \Big) \dd s
+ \left\vert \int_0^t X^{i,N,M}_u \sigma \big( \kappa(u), X^{i,N,M}_{\kappa(u)}, \mu^{X,N,M}_{\kappa(u)} \big) \dd W^i_u \right\vert
.
\end{align*}
With the inequality $\vert a+b \vert^{q/2} \leq C ( \vert a \vert^{q/2} + \vert b \vert^{q/2})$ and Jensen's inequality we therefore obtain
\begin{align*}
\bE \big[ \sup_{0 \leq s \leq t} \left\vert X^{i,N,M}_s \right\vert^q \big]
&	
\leq  C \Big( 1 + \bE \Big[ \big\vert X^{i,N,M}_0 \big\vert^q \Big] + \int_0^t \bE \Big[ \big\vert X^{i,N,M}_{\kappa(s)} \big\vert^q \Big] \dd s
\\
&  + \int_0^t \bE \Big[ \big\vert X^{i,N,M}_s - X^{i,N,M}_{\kappa(s)} \big\vert^{q/2} \big\vert b_M \big( \kappa(s), X^{i,N,M}_{\kappa(s)}, \mu^{X,N,M}_{\kappa(s)} \big) \big\vert^{q/2} \Big] \dd s 
\\
&  + \bE \Big[ \sup_{0 \leq s \leq t} \big\vert \int_0^s X^{i,N,M}_u \sigma \big( \kappa(u), X^{i,N,M}_{\kappa(u)}, \mu^{X,N,M}_{\kappa(u)} \big) \dd W^i_u \big\vert^{q/2} \Big] \Big).
\end{align*}
The application of the Burkholder-Davis-Gundy inequality and Lemma \ref{Lemma:XtMinusKappaLeqCM} with\footnote{Observe that Lemma \ref{Lemma:XtMinusKappaLeqCM} holds for the current value of $p$ and since $q=2p \wedge \hat{p}$ it implies that it holds for $q/2$.} $q/2$ yields
	\begin{align*}
		\bE \left[ \sup_{0 \leq s \leq t} \left\vert X^{i,N,M}_s \right\vert^q \right]
		\leq & C \left( 1 + \bE \left[ \left\vert X^{i,N,M}_0 \right\vert^q \right] + \int_0^t \bE \left[ \sup_{0 \leq u \leq s} \left\vert X^{i,N,M}_{u} \right\vert^q \right] \dd s \right. \\
		& \left. + \bE \left[ \left( \int_0^t \left\vert X^{i,N,M}_s \right\vert^2 \left\vert \sigma \left( \kappa(s), X^{i,N,M}_{\kappa(s)}, \mu^{X,N,M}_{\kappa(s)} \right) \right\vert^2 \dd s \right)^{q/4} \right] \right),
	\end{align*}
	where $C$ denotes in each case a constant that is independent of $h$ (or $M$). With Young's inequality in the form $ab \leq \frac{1}{2C}a^2 + \frac{C}{2}b^2$, H\"older's inequality and the estimate for $\sigma$ we have
	\begin{align*}
	\bE 
	\left[ \sup_{0 \leq s \leq t} \left\vert X^{i,N,M}_s \right\vert^q \right]
	\leq &
	C \left( 1 + \bE \left[ \left\vert X^{i,N,M}_0 \right\vert^q \right] + \int_0^t \bE \left[ \sup_{0 \leq u \leq s} \left\vert X^{i,N,M}_{u} \right\vert^q \right] \dd s + \frac{1}{2C} \bE \left[ \sup_{0 \leq s \leq t} \left\vert X^{i,N,M}_s \right\vert^q \right] \right.
	\\
	& \qquad \left. + \frac{C}{2} \bE \left[ \int_0^t \left\vert \sigma \left( \kappa(s), X^{i,N,M}_{\kappa(s)}, \mu^{X,N,M}_{\kappa(s)} \right) \right\vert^q \dd s \right] \right) 
	\\
	\leq & 
	C \Big( 1 + \bE \left[ \left\vert X^{i,N,M}_0 \right\vert^q \right] + \int_0^t \bE \left[ \sup_{0 \leq u \leq s} \left\vert X^{i,N,M}_{u} \right\vert^q \right] \dd s + \frac{1}{2C} \bE \left[ \sup_{0 \leq s \leq t} \left\vert X^{i,N,M}_s \right\vert^q \right]  
	\\
	& \qquad  + \frac{C}{2} \int_{0}^{t} \bE \Big[ \sup_{0 \leq u \leq s} K \Big( 1 + \left\vert X^{i,N,M}_{u} \right\vert^q + \Big( \frac{1}{N} \sum_{j=1}^N \left\vert X^{j,N,M}_{u} \right\vert^2 \Big)^{q/2} \Big) \Big] \dd s \Big).
	\end{align*}
	Taking the $\frac{1}{2}\bE[\sup_{0 \leq s \leq t} \vert X^{i,N,M}_s \vert^q]$ term to the LHS taking the $\sup$ over $i$ on both sides we obtain
	\begin{align*}
		\sup_{1 \leq i \leq N} \bE \left[ \sup_{0 \leq s \leq t} \left\vert X^{i,N,M}_s \right\vert^q \right]
	\leq C \left( 1 + \bE \left[ \big\vert X^{i,N,M}_0 \big\vert^q \right] + \int_0^t \sup_{1 \leq i \leq N} \bE \left[ \sup_{0 \leq u \leq s} \left\vert X^{i,N,M}_{u} \right\vert^q \right] \dd s \right) < \infty,
	\end{align*}
	and thus the application of Gr\"onwall's lemma yields that
	\begin{equation}
	\label{eq:EsupXMleqC}
	\sup_{1 \leq i \leq N } \bE \Big[ \sup_{0 \leq t \leq T } \big\vert X^{i,N,M}_t \big\vert^q \Big] < C,
	\end{equation}
	for some positive constant $C$ which depends on $\bE[|X_{0}^{i}|^{q}]$ but is independent of $N$ and { $h$ (or $M$)}.
	
	Since \eqref{eq:EsupXMleqC} is proven for $q$ we can set $p=q$ and use this result in the next step of the iteration. Since the new $q$ is at most twice as much as $p$, Lemma \ref{Lemma:XtMinusKappaLeqCM} can again be applied for $q/2$. This iteration gets repeated until $q = \hat{p}$.
\end{proof}

Now we can complete the proof of Proposition \ref{Prop:Tamed Scheme Convergence}.

\begin{proof}[Proof of Proposition \ref{Prop:Tamed Scheme Convergence}]
	Using It\^o's formula we observe,
	\begin{align*}
	\left\vert \Delta X_{t}^{i,N,M} \right\vert^2 
	 & = \int_0^t 2 \langle\Delta X_{s}^{i,N,M}, \left( b \left( s, X^{i,N}_s, \mu^{X,N}_s \right) - b_M \left( \kappa(s), X^{i,N,M}_{\kappa(s)}, \mu^{X,N,M}_{\kappa(s)} \right) \right) \rangle \dd s 
	\\
	&
	\quad
	 + \sum_{a=1}^{l}
	 \int_{0}^{t} \vert \sigma_{a} \left( s, X^{i,N}_s, \mu^{X,N}_s \right) - \sigma_{a} \left( \kappa(s), X^{i,N,M}_{\kappa(s)}, \mu^{X,N,M}_{\kappa(s)} \right) \vert^{2} \dd s 
	 \\
	& \quad
	 + \int_{0}^{t} 2 \langle\Delta X_{s}^{i,N,M}, \left( \sigma \left( s, X^{i,N}_s, \mu^{X,N}_s \right) - \sigma \left( \kappa(s), X^{i,N,M}_{\kappa(s)}, \mu^{X,N,M}_{\kappa(s)} \right) \right) \dd W_{s}^{i} \rangle.
	\end{align*}
	Furthermore observe that
	\begin{align*}
	\langle  X^{i,N}_{s} & - X^{i,N,M}_{s} , b \big( s, X^{i,N}_s, \mu^{X,N}_s \big) - b_M \big( \kappa(s), X^{i,N,M}_{\kappa(s)}, \mu^{X,N,M}_{\kappa(s)} \big) \rangle 
	\\
	= & 
	\langle\Delta X_{s}^{i,N,M} , b \big( s, X^{i,N}_s, \mu^{X,N}_s \big) - b \big( s, X^{i,N,M}_s, \mu^{X,N}_s \big) \rangle 
	\\
	& + \langle\Delta X_{s}^{i,N,M} , b \big( s, X^{i,N,M}_s, \mu^{X,N}_s \big) - b \big( s, X^{i,N,M}_s, \mu^{X,N,M}_s \big) \rangle 
	\\
	& + \langle\Delta X_{s}^{i,N,M} , b \big( s, X^{i,N,M}_s, \mu^{X,N,M}_s \big) - b \big( \kappa(s), X^{i,N,M}_s, \mu^{X,N,M}_s \big) \rangle 
	\\
	& + \langle\Delta X_{s}^{i,N,M} , b \big( \kappa(s), X^{i,N,M}_s, \mu^{X,N,M}_s \big) - b \big( \kappa(s), X^{i,N,M}_{\kappa(s)}, \mu^{X,N,M}_s \big) \rangle 
	\\
	& + \langle\Delta X_{s}^{i,N,M} , b \big( \kappa(s), X^{i,N,M}_{\kappa(s)}, \mu^{X,N,M}_s \big) - b \big( \kappa(s), X^{i,N,M}_{\kappa(s)}, \mu^{X,N,M}_{\kappa(s)} \big) \rangle 
	\\
	& + \langle\Delta X_{s}^{i,N,M} , b \big( \kappa(s), X^{i,N,M}_{\kappa(s)}, \mu^{X,N,M}_{\kappa(s)} \big) - b_M \big( \kappa(s), X^{i,N,M}_{\kappa(s)}, \mu^{X,N,M}_{\kappa(s)} \big) \rangle,
	\end{align*}
	where we estimate every term on the right hand side as follows. 
	Due to Assumption \ref{Ass:Monotone Assumption} we have
	\begin{equation*}
	\langle\Delta X_{s}^{i,N,M} , b \left( s, X^{i,N}_s, \mu^{X,N}_s \right) - b \left( s, X^{i,N,M}_s, \mu^{X,N}_s \right) \rangle 
	\leq L_b \left\vert\Delta X_{s}^{i,N,M} \right\vert^2,
	\end{equation*}
	and
	\begin{align*}
	 \langle \Delta X_{s}^{i,N,M}  , b \left( s, X^{i,N,M}_s, \mu^{X,N}_s \right) - b \left( s, X^{i,N,M}_s, \mu^{X,N,M}_s \right) \rangle 
	& \leq \left\vert\Delta X_{s}^{i,N,M} \right\vert \vert W^{(2)} \left( \mu^{X,N}_s, \mu^{X,N,M}_s \right) \vert 
	\\
	&  \leq \left\vert\Delta X_{s}^{i,N,M} \right\vert \frac{1}{\sqrt{N}} \Big( \sum_{j=1}^N \vert \Delta X_{s}^{j,N,M} \vert ^2 \Big)^{1/2} 
	\\
	& \leq \frac{1}{2} \left\vert\Delta X_{s}^{i,N,M} \right\vert^2 + \frac{1}{2}\frac{1}{N} \sum_{j=1}^N \vert \Delta X_{s}^{j,N,M} \vert ^2,
	\end{align*}
	and with Assumption \ref{Ass:Holder in Time}
	\begin{align*}
	 \langle\Delta X_{s}^{i,N,M} & , b \left( s, X^{i,N,M}_s, \mu^{X,N,M}_s \right) - b \left( \kappa(s), X^{i,N,M}_s, \mu^{X,N,M}_s \right) \rangle 
	\\
	&  
	\qquad \qquad \leq C \left\vert\Delta X_{s}^{i,N,M} \right\vert \vert s - \kappa(s) \vert ^{1/2} 
		\leq \frac{1}{2} \left\vert \Delta X_{s}^{i,N,M} \right\vert^2 +  { C h }.
	\end{align*}
	Further,
	\begin{align*}
	& \big\langle \Delta X_{s}^{i,N,M} , b \big( \kappa(s), X^{i,N,M}_s, \mu^{X,N,M}_s \big) - b \big( \kappa(s), X^{i,N,M}_{\kappa(s)}, \mu^{X,N,M}_s \big) \big\rangle
	 \\
	& \leq \frac{1}{2} \left\vert \Delta X_{s}^{i,N,M} \right\vert^2 + \frac{1}{2} \left\vert b \left( \kappa(s), X^{i,N,M}_s, \mu^{X,N,M}_s \right) - b \left( \kappa(s), X^{i,N,M}_{\kappa(s)}, \mu^{X,N,M}_s \right) \right\vert^2,
	\end{align*}
	which we can furthermore dominate by using the polynomial growth of $b$ with rate $q$, Cauchy-Schwarz,  Lemma \ref{Lemma:EsupXMsmallerC} and Lemma \ref{Lemma:XtMinusKappaLeqCM}, to have 
	\begin{align*}
	& \bE \left[ \sup_{u \in [0,t]} \int_0^u \left\vert b \left( \kappa(s), X^{i,N,M}_s, \mu^{X,N,M}_s \right) - b \left( \kappa(s), X^{i,N,M}_{\kappa(s)}, \mu^{X,N,M}_s \right) \right\vert^2 \dd s \right] \\
	& \leq \int_0^{t} \bE \left[ L \left( 1 + \left\vert X^{i,N,M}_s \right\vert^q + \left\vert X^{i,N,M}_{\kappa(s)} \right\vert^q \right)^2 \left\vert X^{i,N,M}_s - X^{i,N,M}_{\kappa(s)} \right\vert^2 \right] \dd s \\
	& \leq \int_0^{t} \sqrt{ \bE \left[ L \left( 1 + \left\vert X^{i,N,M}_s \right\vert^q + \left\vert X^{i,N,M}_{\kappa(s)} \right\vert^q \right)^4 \right] \bE \left[ \left\vert X^{i,N,M}_s - X^{i,N,M}_{\kappa(s)} \right\vert^4 \right] } \dd s 
	\leq \int_0^t \sqrt{ {C h^{2} } } \dd s 
	 \leq { C h } 
	\end{align*}  
	since 
	$$
	\sup_{{h>0}} 
	\sup_{1 \leq i \leq N} \bE \left[ \sup_{0\leq t \leq T} \left\vert X^{i,N,M}_t \right\vert^{4q} \right] 
	\leq 1 + 
	\sup_{{h>0}} 
	\sup_{1 \leq i \leq N} \bE \left[ \sup_{0\leq t \leq T} \left\vert X^{i,N,M}_t \right\vert^{4(1+q)} \right] < \infty.$$
	Again Assumption \ref{Ass:Monotone Assumption} yields
	\begin{align*}
	& \langle \Delta X_{s}^{i,N,M} , b \left( \kappa(s), X^{i,N,M}_{\kappa(s)}, \mu^{X,N,M}_s \right) - b \left( \kappa(s), X^{i,N,M}_{\kappa(s)}, \mu^{X,N,M}_{\kappa(s)} \right) \rangle \\
	& \leq 
	\left\vert \Delta X_{s}^{i,N,M} \right\vert \frac{1}{\sqrt{N}} \Big( \sum_{j=1}^N \vert X^{j,N,M}_{s} - X^{j,N,M}_{\kappa(s)} \vert ^2 \Big)^{1/2} 
	\leq 
	\frac{1}{2} \left\vert \Delta X_{s}^{i,N,M} \right\vert^2 + \frac{1}{2} \frac{L^2}{N} \sum_{j=1}^N \left\vert X^{j,N,M}_{s} - X^{j,N,M}_{\kappa(s)} \right\vert ^2,
	\end{align*}
	and the definition of $b_M$ together with $\vert a - \frac{a}{1+ M^{-\alpha} \vert a \vert } \vert = \vert a \frac{ M^{-\alpha} \vert a \vert}{ 1 + M^{-\alpha} \vert a \vert } \vert \leq \vert a \vert^2 M^{-\alpha}={\vert a \vert^2 T^{-\alpha} h^{\alpha}} $ that
	\begin{align*}
	& \langle \Delta X_{s}^{i,N,M} , b \big( \kappa(s), X^{i,N,M}_{\kappa(s)}, \mu^{X,N,M}_{\kappa(s)} \big) - b_M \big( \kappa(s), X^{i,N,M}_{\kappa(s)}, \mu^{X,N,M}_{\kappa(s)} \big) \rangle 
	\\
	& \leq \frac{1}{2} \left\vert \Delta X_{s}^{i,N,M} \right\vert^2 + \frac{1}{2} \left\vert b \big( \kappa(s), X^{i,N,M}_{\kappa(s)}, \mu^{X,N,M}_{\kappa(s)} \big) - b_M \big( \kappa(s), X^{i,N,M}_{\kappa(s)}, \mu^{X,N,M}_{\kappa(s)} \big) \right\vert^2 
	\\
	& \leq \frac{1}{2} \left\vert \Delta X_{s}^{i,N,M} \right\vert^2 
	+ \frac{1}{2} 
	{ T^{-2\alpha} h^{2\alpha}}
	\big\vert b \left( \kappa(s), X^{i,N,M}_{\kappa(s)}, \mu^{X,N,M}_{\kappa(s)} \right) \big\vert^4 
	\\
	& \leq 
	\frac{1}{2} \left\vert \Delta X_{s}^{i,N,M} \right\vert^2 
	+
	{ C h^{2\alpha}}
	\Big( 1 +  \left\vert X^{i,N,M}_{\kappa(s)} \right\vert^{4(1+q)} 
	+ \Big( \frac{1}{N} \sum_{j=1}^N \left\vert X^{j,N,M}_{\kappa(s)} \right\vert^2 \Big)^2 \Big),
	\end{align*}
	where $q$ is again the polynomial growth rate of $b$.
	Also the Burkholder-Davis-Gundy inequality yields
	\begin{align*}
	& \bE \left[ \sup_{u \in [0,t]} \int_{0}^{u} 2 
	\langle \Delta X_{s}^{i,N,M}, \left( \sigma \big( s, X^{i,N}_s, \mu^{X,N}_s \big) - \sigma \big( \kappa(s), X^{i,N,M}_{\kappa(s)}, \mu^{X,N,M}_{\kappa(s)} \big) \right) \dd W_{s}^{i} \rangle \right] 
	\\
	& \leq 
	\bE \Bigg[ \Big( C \int_{0}^{t} 
	\Big( \sum_{a=1}^{l} \vert \sigma_{a}(s,X^{i,N}_s, \mu^{X,N}_s)- \sigma_{a}(s, X^{i,N,M}_{\kappa(s)}, \mu^{X,N,M}_{\kappa(s)} ) \vert^{2} \Big) \vert \Delta X_{s}^{i,N,M} \vert^{2} 
	\dd s \Big)^{\frac{1}{2}} \Bigg] 
	\\
	& \leq 
	\bE \Big[ \frac{1}{2} \sup_{u \in [0,t]} \vert \Delta X_{u}^{i,N,M} \vert^{2} 
	+ 
	C \int_{0}^{t} \sum_{a=1}^{l} \left\vert \sigma_{a}(s,X^{i,N}_s, \mu^{X,N}_s)- \sigma_{a}( \kappa(s), X^{i,N,M}_{\kappa(s)}, \mu^{X,N,M}_{\kappa(s)}) \right\vert^{2} \dd s \Big].
	\end{align*}
	and
	\begin{align*}
	& \left\vert \sigma_{a} \left( s, X^{i,N}_s, \mu^{X,N}_s \right) - \sigma_{a} \left( \kappa(s), X^{i,N,M}_{\kappa(s)}, \mu^{X,N,M}_{\kappa(s)} \right) \right\vert^{2} 
	\\
	& \leq 
	C \left\vert s - \kappa(s) \right\vert + C \left\vert X^{i,N}_s - X^{i,N,M}_{\kappa(s)} \right\vert^2 + C W^{(2)} \left(\mu^{X,N}_s, \mu^{X,N,M}_{\kappa(s)} \right)^2 
	\\
	& \leq 
	C M^{- 1} + C\left\vert X^{i,N}_s - X^{i,N,M}_{\kappa(s)} \right\vert^2 + \frac{C}{N} \sum_{j=1}^N \left\vert X^{j,N}_s - X^{j,N,M}_{\kappa(s)} \right\vert ^2
	 \\
	&  \leq
	 C M^{- 1} + C\left\vert X^{i,N}_s - X^{i,N,M}_{\kappa(s)} \right\vert^2
	+ \frac{C}{N} \sum_{j=1}^N \left( \left\vert \Delta X_{s}^{j,N,M} \right\vert^2 + \left\vert X^{j,N,M}_s - X^{j,N,M}_{\kappa(s)} \right\vert ^2 \right).
	\end{align*}
	By putting those estimates together we obtain
	\begin{align*}
	\bE \left[ \sup_{0 \leq u \leq t} \left\vert \Delta X_{u}^{i,N,M} \right\vert^2  \right] 
	& \leq 
	C \bE \Bigg[ 
	{ h}
	+ \int_0^t \left\vert \Delta X_{s}^{i,N,M} \right\vert^2
	+ \frac{1}{N} \sum_{j=1}^N \left\vert X^{j,N,M}_{s} - X^{j,N,M}_{\kappa(s)} \right\vert^2 
	+ { h} 	
	+ \frac{1}{N} \sum_{j=1}^N \left\vert \Delta X_{s}^{j,N,M} \right\vert^2 
	\\
	&
	  + \left\vert X^{i,N,M}_{s} - X^{i,N,M}_{\kappa(s)} \right\vert^2
	+ { h^{2\alpha}} 
	\Big( 1 +  \left\vert X^{i,N,M}_{\kappa(s)} \right\vert^{4(1+q)} \Big) 
	 + { h^{2\alpha}} 
	\Big( \frac{1}{N} \sum_{j=1}^N \left\vert X^{j,N,M}_{\kappa(s)} \right\vert^2 \Big)^2 \dd s 
	\Bigg]
	\\
	&
	+ 
	\bE \Big[ \frac{1}{2} \sup_{u \in [0,t]} \vert \Delta X_{u}^{i,N,M} \vert^{2} \Big]
	\end{align*}
	and therefore
	\begin{align*}
	\bE \left[ \sup_{0 \leq u \leq t} \left\vert \Delta X_{u}^{i,N,M} \right\vert^2  \right] 
	\leq \ &
	C  \Big( 
	\int_0^t \bE \Big[ \sup_{0 \leq u \leq s} \big\vert \Delta X_{u}^{i,N,M} \big\vert^2 \Big] \dd s  
	+ { h^{2\alpha}+h} 
	\Big),
	\end{align*}
	by Lemma \ref{Lemma:EsupXMsmallerC} and since $X^{i,N}$ are identically distributed and $X^{i,N,M}$ are identically distributed for all $i \in \{ 1, \ldots, N \}$. This estimate holds for every $i$ hence we can insert $\sup_{1 \leq i \leq N}$ on both sides giving
	\begin{align*}
	\sup_{1 \leq i \leq N} \bE \left[ \sup_{0 \leq u \leq t} \left\vert \Delta X_{u}^{i,N,M} \right\vert^2  \right] 
	& 
	 \leq C \Big( \int_0^t \sup_{1 \leq i \leq N} \bE \Big[ \sup_{0 \leq u \leq s} \left\vert \Delta X_{u}^{i,N,M} \right\vert^2 \Big] \dd s 
	+ { h^{2\alpha}+h} 
	\Big)
	 < \infty,
	\end{align*}
	and finally by Gr\"onwall's lemma (using that $\alpha = 1/2$),
	\begin{align*}
&	\sup_{1 \leq i \leq N} \bE \left[ \sup_{0 \leq u \leq t} \left\vert X^{i,N}_{u} - X^{i,N,M}_{u} \right\vert^2  \right]
	\leq 
	{ C h}. 
	\end{align*}
\end{proof}


\subsection{Proof of implicit convergence}
\label{Sec:Proofs of Implicit}
The main goal here is to prove Proposition \ref{Prop:Strong Implicit Scheme Convergence}. We loosely follow \cite{MaoSzpruch2013}, however, due to the extra dependencies on time and measure and further allowing for random initial conditions we require more refined arguments. We  take $N$ as some fixed positive integer.
Before considering the implicit scheme, let us make a remark and show a result on the particle system \eqref{Eq:MV-SDE Propagation}.

\begin{remark}[Monotone growth]
\label{rem:MonotoneGrowth}
	The combination of Assumption \ref{Ass:Monotone Assumption}, \ref{Ass:Holder in Time} and H\ref{Ass:Bounded Measure}, imply the monotone growth condition. Namely, there exist constants $\alpha,\beta\in \bR$ such that $\forall\hspace*{0.2mm} t \in [0,T], \mu \in \cP_{2}(\bR^{d})$ with $l$ being the dimension of the Brownian motion,
	\begin{align*}
	\langle x, b(t,x,\mu) \rangle
	+
	\frac{1}{2 }
	\sum_{a=1}^{l}
	|\sigma_{a}(t,x,\mu)|^{2} 
	\le
	\alpha + \beta |x|^{2}
	\quad
	\forall x \in \bR^{d}.
	\end{align*}
\end{remark}

\begin{proposition}
	\label{Prop:Particle System Bounds}
	Let Assumption \ref{Ass:Monotone Assumption}, \ref{Ass:Holder in Time} and H\ref{Ass:Bounded Measure} (in Assumption \ref{Ass:Extra Implicit Bounds}) hold, further, let $X_{0} \in L^{2}(\bR^{d})$. Then the following bounds hold,
	\begin{align*}
	\sup_{1 \le i \le N} \bE[|X_{T}^{i,N}|^{2}] \le \big(\bE[|X_{0}|^{2}] + 2 \alpha T \big) \exp (2 \beta T ),
	\end{align*}
	and for $\tau_{m}^{i} = \inf \{t \ge 0 : ~ |X_{t}^{i,N}| >m \}$ we have 
	\begin{align*}
	\sup_{1 \le i \le N} \bP( \tau_{m}^{i} \le T) 
	\le
	\frac{1}{m^{2}} \big(\bE[|X_{0}|^{2}] + 2 \alpha T \big) \exp (2 \beta T ) \, .
	\end{align*}
\end{proposition}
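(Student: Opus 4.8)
The plan is to run the standard second-moment estimate for SDEs under the monotone growth condition of Remark \ref{rem:MonotoneGrowth}, crucially exploiting that this condition is \emph{uniform in the measure argument}. Indeed, although the particle SDE \eqref{Eq:MV-SDE Propagation} features the random empirical measure $\mu^{X,N}_s$ inside $b$ and $\sigma$, the bound $\langle x,b(t,x,\mu)\rangle+\tfrac12\sum_a|\sigma_a(t,x,\mu)|^2\le\alpha+\beta|x|^2$ holds for \emph{all} $\mu\in\cP_2(\bR^d)$, hence applies $\omega$-by-$\omega$ with $\mu=\mu^{X,N}_s(\omega)$.

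First I would fix $i$, apply It\^o's formula to $|X^{i,N}_t|^2$, and localise with $\tau^i_m$:
\[
|X^{i,N}_{t\wedge\tau^i_m}|^2
=|X^i_0|^2
+\int_0^{t\wedge\tau^i_m}\!\!\Big(2\langle X^{i,N}_s,b(s,X^{i,N}_s,\mu^{X,N}_s)\rangle+\sum_{a=1}^l|\sigma_a(s,X^{i,N}_s,\mu^{X,N}_s)|^2\Big)\dd s+M^i_{t\wedge\tau^i_m},
\]
where $M^i_t=\int_0^t 2\langle X^{i,N}_s,\sigma(s,X^{i,N}_s,\mu^{X,N}_s)\dd W^i_s\rangle$. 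Since the sample paths are continuous, $\sup_{s\le\tau^i_m}|X^{i,N}_s|\le m$, so the integrand of the stopped stochastic integral is bounded and $M^i_{\cdot\wedge\tau^i_m}$ is a true martingale; taking expectations removes it. The monotone growth bound then gives, after Tonelli and using $\bE[|X^i_0|^2]=\bE[|X_0|^2]$,
\[
\bE\big[|X^{i,N}_{t\wedge\tau^i_m}|^2\big]\le \bE[|X_0|^2]+2\alpha T+2\beta\int_0^t\bE\big[|X^{i,N}_{s\wedge\tau^i_m}|^2\big]\dd s,
\]
and since the left side is bounded by $m^2<\infty$, Gr\"onwall's lemma yields $\bE[|X^{i,N}_{t\wedge\tau^i_m}|^2]\le(\bE[|X_0|^2]+2\alpha T)\exp(2\beta t)$ for every $m$ and every $i$.

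For the tail bound, path continuity gives $|X^{i,N}_{T\wedge\tau^i_m}|=m$ on $\{\tau^i_m\le T\}$, so
\[
m^2\,\bP(\tau^i_m\le T)\le \bE\big[|X^{i,N}_{T\wedge\tau^i_m}|^2\big]\le\big(\bE[|X_0|^2]+2\alpha T\big)\exp(2\beta T),
\]
which is the second assertion after dividing by $m^2$ and taking $\sup_i$. For the first assertion, the above tail estimate along $m=2^k$ is summable, so Borel--Cantelli gives $\tau^i_m\to\infty$ a.s.; then $|X^{i,N}_{T\wedge\tau^i_m}|^2\to|X^{i,N}_T|^2$ a.s., and Fatou's lemma gives $\bE[|X^{i,N}_T|^2]\le\liminf_m\bE[|X^{i,N}_{T\wedge\tau^i_m}|^2]\le(\bE[|X_0|^2]+2\alpha T)\exp(2\beta T)$, uniformly in $i$.

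There is no real obstacle here; the only points needing attention are that the stopped stochastic integral is a genuine (not merely local) martingale --- which is where continuity of the paths of $X^{i,N}$ is used, to bound $|X^{i,N}|$ on $[0,\tau^i_m]$ --- and that the measure-dependence causes no trouble, which is automatic because Remark \ref{rem:MonotoneGrowth} is uniform over $\cP_2(\bR^d)$. The bootstrap from the localised bound to the global one is a routine Borel--Cantelli/Fatou argument.
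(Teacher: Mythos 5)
Your proposal is correct and follows essentially the same route as the paper: It\^o on the stopped squared process, removal of the martingale via the localisation, the monotone growth bound of Remark \ref{rem:MonotoneGrowth} (which is indeed uniform in the measure argument) plus Gr\"onwall, the Chebyshev-type lower bound $\bE[|X^{i,N}_{T\wedge\tau^i_m}|^2]\ge m^2\,\bP(\tau^i_m\le T)$, and Fatou to pass to the limit in $m$. The only cosmetic difference is that you invoke Borel--Cantelli to get $\tau^i_m\to\infty$, whereas continuity of the paths on $[0,T]$ already gives this directly; this is harmless.
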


\begin{proof}
	Firstly, let us consider the stopped process $X_{T \wedge \tau_{m}^{i}}^{i,N}$. Applying It\^{o} to the square of this process and taking expectations yields
	\begin{align*}
		\bE[ |X_{T \wedge \tau_{m}^{i}}^{i,N}|^{2}]
		&
		=
		\bE[|X_{0}^{i}|^{2}]
		+
		\bE \Big[ \int_{0}^{T \wedge \tau_{m}^{i}} 2 \langle X_{s}^{i,N}, b(s, X_{s}^{i,N}, \mu_{s}^{X,N}) \rangle
		+
		\sum_{a=1}^{l} |\sigma_{a}(s, X_{s}^{i,N}, \mu_{s}^{X,N})|^{2} \dd s 
		\Big]
		\\
		&
		\le
		\bE[|X_{0}^{i}|^{2}]
		+
		2 \alpha T
		+
		\int_{0}^{T} 2 \beta \bE[ \vert X_{s \wedge \tau_{m}^{i}}^{i,N} \vert^2 ] \dd s 
		\le 
		\big( \bE[|X_{0}^{i}|^{2}]
		+
		2 \alpha T \big) e^{2 \beta T}
		,
	\end{align*}
	where we have used the growth and stopping condition to remove the martingale term, then Remark \ref{rem:MonotoneGrowth}, uniform boundedness of $b$ in the measure component and Gronwall's inequality to obtain the result.
	
	Noting that the following lower bound also holds,
	\begin{align*}
		\bE[ |X_{T \wedge \tau_{m}^{i}}^{i,N}|^{2}]
		\ge
		m^{2} \bP( \tau_{m}^{i} \le T) \, ,
\quad\textrm{we obtain}\quad
%
	%
		\bP( \tau_{m}^{i} \le T) 
		\le
		\frac{1}{m^{2}} \big(\bE[|X_{0}^{i}|^{2}] + 2 \alpha T \big) \exp (2 \beta T ) \, .
	\end{align*}
	Further, since $\lim_{m \rightarrow \infty} |X_{T \wedge \tau_{m}^{i}}^{i,N}| = |X_{T}^{i,N}|$, we obtain by Fatou's lemma,
	\begin{align*}
		\bE[|X_{T}^{i,N}|^{2}]
		\le
		\liminf_{m \rightarrow \infty}
		\bE[|X_{T \wedge \tau_{m}^{i}}^{i,N}|^{2}]
		\le
		\big(\bE[|X_{0}^{i}|^{2}] + 2 \alpha T \big) \exp (2 \beta T ) \, .
	\end{align*}
	The result then follows by noting that $\bE[|X_{0}^{i}|^{2}]=\bE[|X_{0}|^{2}]$ and hence the bounds are independent of $i$, so we obtain the result for the supremum over $i$.
\end{proof}

Let us now return to the implicit scheme. At each time step $t_i$ and for each particle $i$ one needs to solve the fixed point equation { (with $h=T/M$)}
\begin{align*}
\tilde{X}_{t_{k+1}}^{i,N,M} 
-
b\Big(t_{k},\tilde{X}_{t_{k+1}}^{i,N,M}, \tilde{\mu}^{X,N,M}_{t_{k}} \Big) h 
= 
\tilde{X}_{t_{k}}^{i,N,M}
+
\sigma\Big(t_{k},\tilde{X}_{t_{k}}^{i,N,M} , \tilde{\mu}^{X,N,M}_{t_{k}} \Big) \Delta W_{t_{k}}^{i}.
\end{align*}
This leads us to consider a function $F$
\begin{equation}
\label{Eq:implicitToSolve}
F(t,x,\mu) := x - b(t,x,\mu)h.
\end{equation}
For the implicit scheme to have a solution the function $F$ must have a unique inverse.
The following lemma is crucial in proving convergence of the implicit scheme. 
\begin{lemma}
	\label{Lemma:Results for F}
	Let Assumption \ref{Ass:Monotone Assumption}, \ref{Ass:Holder in Time} and  H\ref{Ass:Bounded Measure} (in Assumption \ref{Ass:Extra Implicit Bounds}) hold and fix $h^{*}< 1/\max(L_{b}, 2 \beta)$. Further, let $0 < h \le h^{*}$ and take any $ t \in [0,T]$ and $\mu \in \cP_{2}(\bR^{d})$ fixed. Then for all $y \in \bR^{d}$, there exists a unique $x$ such that $F(t,x,\mu)=y$. Hence the fixed point problem in \eqref{Eq:implicitScheme} is well defined.
	
	Moreover, for all $t \in [0,T]$ and $\mu \in \cP_{2}(\bR^{d})$ the following bound holds,
	\begin{align} \label{est:FixEquation}
	|x|^{2}
	\le
	(1- 2 h \beta)^{-1}(|F(t,x,\mu)|^{2} + 2 h \alpha) \, ,
	\end{align}
	and for any $k \ge 1$ the following recursive bound holds,
	\begin{align}
	|F(t_{k} ,\tilde{X}_{t_{k+1}}^{i,N,M}, \tilde{\mu}_{t_{k}}^{X,N,M})|^{2}
	\notag
	&
	\le
	|F(t_{k-1},\tilde{X}_{t_{k}}^{i,N,M}, \tilde{\mu}_{t_{k-1}}^{X,N,M})|^{2}
	+
	\Big( \sum_{a=1}^{l}|\sigma_{a}(t_{k},\tilde{X}_{t_{k}}^{i,N,M}, \tilde{\mu}_{t_{k}}^{X,N,M})| |\big(\Delta W_{t_{k}}^{i} \big)_{a}|\Big)^{2}
	\notag
	\\
	\label{Eq:Recurrence Bound F}
	&
	+
	2h \alpha
	+ 
	2 h \beta |\tilde{X}_{t_{k}}^{i,N,M}|^{2}
	+
	2
	\langle \tilde{X}_{t_{k}}^{i,N,M},
	\sigma(t_{k},\tilde{X}_{t_{k}}^{i,N,M}, \tilde{\mu}_{t_{k}}^{X,N,M}) \Delta W_{t_{k}}^{i} \rangle
	\, ,	
	\end{align}
	where $\big(\Delta W_{t_{k}}^{i} \big)_{a}$ is the $a$th entry of the vector.
\end{lemma}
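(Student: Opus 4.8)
The plan is to prove the three assertions in order, since each feeds the next.

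\textbf{Invertibility of $F$.} Fix $t\in[0,T]$ and $\mu\in\cP_2(\bR^d)$ and consider $x\mapsto F(t,x,\mu)=x-h\,b(t,x,\mu)$. First I would use the one-sided Lipschitz bound in Assumption \ref{Ass:Monotone Assumption} to obtain, for all $x,x'\in\bR^d$,
\[
\langle x-x',\,F(t,x,\mu)-F(t,x',\mu)\rangle=|x-x'|^2-h\langle x-x',\,b(t,x,\mu)-b(t,x',\mu)\rangle\ge(1-hL_b)|x-x'|^2,
\]
where $1-hL_b>0$ because $h\le h^*<1/\max(L_b,2\beta)$ (and trivially if $L_b\le0$). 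Together with continuity of $F(t,\cdot,\mu)$, which holds since $b$ is locally Lipschitz in $x$ by Assumption \ref{Ass:Monotone Assumption}, uniform monotonicity makes $F(t,\cdot,\mu)$ a homeomorphism of $\bR^d$: injectivity is immediate, the coercivity estimate $\langle x,\,F(t,x,\mu)-F(t,0,\mu)\rangle\ge(1-hL_b)|x|^2$ forces $|F(t,x,\mu)|\to\infty$ as $|x|\to\infty$, and surjectivity then follows from the standard argument for uniformly monotone continuous maps on $\bR^d$ (as invoked for implicit Euler schemes, cf.\ \cite{MaoSzpruch2013}). Hence the fixed-point equation in \eqref{Eq:implicitScheme} has a unique solution at every step.

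\textbf{The a priori bound \eqref{est:FixEquation}.} Writing $y=F(t,x,\mu)=x-h\,b(t,x,\mu)$ and expanding the square,
\[
|y|^2=|x|^2-2h\langle x,\,b(t,x,\mu)\rangle+h^2|b(t,x,\mu)|^2\ge|x|^2-2h\langle x,\,b(t,x,\mu)\rangle.
\]
The monotone growth condition in Remark \ref{rem:MonotoneGrowth} (valid under Assumptions \ref{Ass:Monotone Assumption}, \ref{Ass:Holder in Time} and H\ref{Ass:Bounded Measure}) gives, after dropping the nonnegative diffusion term, $\langle x,\,b(t,x,\mu)\rangle\le\alpha+\beta|x|^2$, so $|y|^2\ge(1-2h\beta)|x|^2-2h\alpha$. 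Dividing by $1-2h\beta>0$ yields \eqref{est:FixEquation}. I record for later use the intermediate inequality $|x|^2\le|F(t,x,\mu)|^2+2h\alpha+2h\beta|x|^2$, which is the same rearrangement without dividing and hence needs no sign restriction.

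\textbf{The recursion \eqref{Eq:Recurrence Bound F}.} The key observation is that \eqref{Eq:implicitScheme} at step $k$ isolates $F$:
\[
F(t_k,\tilde{X}_{t_{k+1}}^{i,N,M},\tilde{\mu}_{t_k}^{X,N,M})=\tilde{X}_{t_{k+1}}^{i,N,M}-h\,b(t_k,\tilde{X}_{t_{k+1}}^{i,N,M},\tilde{\mu}_{t_k}^{X,N,M})=\tilde{X}_{t_k}^{i,N,M}+\sigma(t_k,\tilde{X}_{t_k}^{i,N,M},\tilde{\mu}_{t_k}^{X,N,M})\Delta W_{t_k}^i.
\]
Taking $|\cdot|^2$ and expanding,
\[
|F(t_k,\tilde{X}_{t_{k+1}}^{i,N,M},\tilde{\mu}_{t_k}^{X,N,M})|^2=|\tilde{X}_{t_k}^{i,N,M}|^2+2\langle\tilde{X}_{t_k}^{i,N,M},\,\sigma(t_k,\tilde{X}_{t_k}^{i,N,M},\tilde{\mu}_{t_k}^{X,N,M})\Delta W_{t_k}^i\rangle+|\sigma(t_k,\tilde{X}_{t_k}^{i,N,M},\tilde{\mu}_{t_k}^{X,N,M})\Delta W_{t_k}^i|^2.
\]
For the last term, write $\sigma\Delta W=\sum_{a=1}^l\sigma_a(\Delta W)_a$ and apply the triangle inequality to bound it by $\big(\sum_{a=1}^l|\sigma_a(t_k,\tilde{X}_{t_k}^{i,N,M},\tilde{\mu}_{t_k}^{X,N,M})|\,|(\Delta W_{t_k}^i)_a|\big)^2$. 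For the first term, apply the intermediate inequality recorded above at the previous step, i.e.\ with $x=\tilde{X}_{t_k}^{i,N,M}$, $t=t_{k-1}$, $\mu=\tilde{\mu}_{t_{k-1}}^{X,N,M}$, which is legitimate precisely because $k\ge1$, to get $|\tilde{X}_{t_k}^{i,N,M}|^2\le|F(t_{k-1},\tilde{X}_{t_k}^{i,N,M},\tilde{\mu}_{t_{k-1}}^{X,N,M})|^2+2h\alpha+2h\beta|\tilde{X}_{t_k}^{i,N,M}|^2$. Substituting these two estimates into the display yields \eqref{Eq:Recurrence Bound F}.

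\textbf{Main obstacle.} The only step that is not pure algebra is the surjectivity half of the invertibility claim: monotonicity plus continuity simultaneously gives injectivity and coercivity, but concluding that $F(t,\cdot,\mu)$ maps onto all of $\bR^d$ requires the standard topological fact about uniformly monotone continuous maps (or an appeal to the existing implicit-Euler literature). Everything else reduces to expanding squares and invoking the monotone growth bound of Remark \ref{rem:MonotoneGrowth}.
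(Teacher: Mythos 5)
Your proof is correct and follows essentially the same route as the paper: monotone-operator theory (continuity, monotonicity, coercivity) for well-posedness of the fixed-point equation, the same expansion of $|F(t,x,\mu)|^{2}$ combined with the monotone growth bound of Remark \ref{rem:MonotoneGrowth} for \eqref{est:FixEquation}, and the scheme identity $F(t_{k},\tilde{X}_{t_{k+1}}^{i,N,M},\tilde{\mu}_{t_{k}}^{X,N,M})=\tilde{X}_{t_{k}}^{i,N,M}+\sigma(t_{k},\tilde{X}_{t_{k}}^{i,N,M},\tilde{\mu}_{t_{k}}^{X,N,M})\Delta W_{t_{k}}^{i}$ for the recursion. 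Your bookkeeping in the recursion step --- expanding the square around $\tilde{X}_{t_{k}}^{i,N,M}$ and then applying the undivided form of \eqref{est:FixEquation} at time $t_{k-1}$, rather than expanding around $F(t_{k-1},\cdot)+bh$ and bounding the cross terms as the paper does --- is a harmless, slightly cleaner variant that yields the identical bound.
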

\begin{proof}
	Let us first prove there exists a unique solution to \eqref{Eq:implicitToSolve}, in the sense that for all $ t \in [0,T]$ and $\mu \in \cP_{2}(\bR^{d})$ fixed, then there exists a unique $x \in \bR^{d}$ such that $F(t,x,\mu)=y$ for a given $y \in \bR^{d}$, provided $0 < h < h^{*}$. This is a classical problem considered in \cite[p.557]{Zeidler1990-IIB} or see \cite[p.2596]{LionnetReisSzpruch2015}, which requires $F$ to be continuous, monotone and coercive (in $x$). The continuity of $b$ yields that of $F$. For the monotonicity of $F$, we have 
	\begin{align*}
		\langle x- x', F(t,x,\mu) -F(t,x',\mu) \rangle
		&=
		|x-x'|^{2} - \langle x- x', b(t,x,\mu)h -b(t,x',\mu)h \rangle
		\\
		&
		\ge
		|x-x'|^{2} (1- L_{b}h) \, ,
	\end{align*}
	and provided $h< 1/L_{b}$, the final constant is strictly positive. Coercivity follows similarly by the monotone growth condition in $b$,
	\begin{align*}
		\langle x,  F(t,x,\mu) \rangle
		\ge
		|x|^{2}
		- h(\alpha + \beta |x|^{2}) \, ,
	\end{align*}
	therefore, 
	\begin{align*}
		\lim_{|x| \rightarrow \infty}
		\frac{\langle x,  F(t,x,\mu) \rangle}{|x|}
		=
		\infty,
		\quad
		\text{for } h < 1/\beta.
	\end{align*}
	Hence $F(t,x,\mu)=y$ has a unique solution for $F$ defined in \eqref{Eq:implicitToSolve} and therefore the numerical scheme \eqref{Eq:implicitScheme} is well defined.

	To show $x$ is bounded by $F(\cdot, x, \cdot)$, again fix some  $ t \in [0,T]$ and $\mu \in \cP_{2}(\bR^{d})$. Then,
	\begin{align*}
		|F (t,x,\mu)|^{2} 
		&= |x|^{2} - 2 \langle x, b(t,x,\mu) \rangle h + |b(t,x,\mu)|^{2} h^{2}
		\\
		&
		\ge
		|x|^{2} - 2 \langle x, b(t,x,\mu) \rangle h
		\ge
		(1-2h \beta)|x|^{2} - 2h \alpha,
	\end{align*}
	by Remark \ref{rem:MonotoneGrowth}.
	Since $h < 1/(2 \beta)$, we obtain
	\begin{align*}
		|x|^{2}
		\le
		(1- 2 h \beta)^{-1}(|F(t,x,\mu)|^{2} + 2 h \alpha) \, .
	\end{align*}
	This result is useful since it holds for all $ t \in [0,T]$ and $\mu \in \cP_{2}(\bR^{d})$. For the recursive bound it is useful to note
	\begin{align}
		F(t_{k},\tilde{X}_{t_{k+1}}^{i,N,M}, \tilde{\mu}_{t_{k}}^{X,N,M})
		&
		=
		\tilde{X}_{t_{k+1}}^{i,N,M}
		-
		b(t_{k},\tilde{X}_{t_{k+1}}^{i,N,M}, \tilde{\mu}_{t_{k}}^{X,N,M}) h
		\notag
		\\
		\nonumber
		\label{Eq:Useful F Relation}
		&
		=
		\tilde{X}_{t_{k}}^{i,N,M}
		+
		\sigma(t_{k},\tilde{X}_{t_{k}}^{i,N,M}, \tilde{\mu}_{t_{k}}^{X,N,M}) \Delta W_{t_{k}}^{i}
		\\
		&
		=
		F(t_{k-1},\tilde{X}_{t_{k}}^{i,N,M}, \tilde{\mu}_{t_{k-1}}^{X,N,M})
		+
		b(t_{k-1},\tilde{X}_{t_{k}}^{i,N,M}, \tilde{\mu}_{t_{k-1}}^{X,N,M}) h
		\\
		& \quad
		+
		\sigma(t_{k},\tilde{X}_{t_{k}}^{i,N,M}, \tilde{\mu}_{t_{k}}^{X,N,M}) \Delta W_{t_{k}}^{i}. \notag
	\end{align}
	This recursion is only valid for $k \ge 1$ due to the appearance of $t_{k-1}$. Using this relation observe the following,
	\begin{align*}
		|F(t_{k}  ,\tilde{X}_{t_{k+1}}^{i,N,M}, \tilde{\mu}_{t_{k}}^{X,N,M})|^{2}
		&
		=
		|F(t_{k-1},\tilde{X}_{t_{k}}^{i,N,M}, \tilde{\mu}_{t_{k-1}}^{X,N,M})|^{2}
		+
		|b(t_{k-1},\tilde{X}_{t_{k}}^{i,N,M}, \tilde{\mu}_{t_{k-1}}^{X,N,M})|^{2} h^{2}
		\\
		&
		+
		|\sigma(t_{k},\tilde{X}_{t_{k}}^{i,N,M}, \tilde{\mu}_{t_{k}}^{X,N,M}) \Delta W_{t_{k}}^{i}|^{2}
		\\
		&
		+
		2 \langle F(t_{k-1},\tilde{X}_{t_{k}}^{i,N,M}, \tilde{\mu}_{t_{k-1}}^{X,N,M})
		,
		b(t_{k-1},\tilde{X}_{t_{k}}^{i,N,M}, \tilde{\mu}_{t_{k-1}}^{X,N,M}) \rangle h
		\\
		&
		+
		2 \langle F(t_{k-1},\tilde{X}_{t_{k}}^{i,N,M}, \tilde{\mu}_{t_{k-1}}^{X,N,M})
		\\
		&
		\qquad \quad
				+
		b(t_{k-1},\tilde{X}_{t_{k}}^{i,N,M}, \tilde{\mu}_{t_{k-1}}^{X,N,M}) h
		,
		\sigma(t_{k},\tilde{X}_{t_{k}}^{i,N,M}, \tilde{\mu}_{t_{k}}^{X,N,M}) \Delta W_{t_{k}}^{i} \rangle
		.
	\end{align*}
	We now look to bound these various terms. By definition of $F$,
	\begin{align*}
		 2 \langle F(t_{k-1},\tilde{X}_{t_{k}}^{i,N,M}, \tilde{\mu}_{t_{k-1}}^{X,N,M})
		&,
		b(t_{k-1},\tilde{X}_{t_{k}}^{i,N,M}, \tilde{\mu}_{t_{k-1}}^{X,N,M})  \rangle h
		+
		|b(t_{k-1},\tilde{X}_{t_{k}}^{i,N,M}, \tilde{\mu}_{t_{k-1}}^{X,N,M})|^{2} h^{2}
		\\
		&
		\le
		2 \langle \tilde{X}_{t_{k}}^{i,N,M}
		,
		b(t_{k-1},\tilde{X}_{t_{k}}^{i,N,M}, \tilde{\mu}_{t_{k-1}}^{X,N,M}) \rangle \ h
		\le
		2 h \alpha
		+ 2 h \beta |\tilde{X}_{t_{k}}^{i,N,M}|^{2}.
	\end{align*}
	Similarly,
	\begin{align*}
		2 \langle F(t_{k-1},\tilde{X}_{t_{k}}^{i,N,M}, \tilde{\mu}_{t_{k-1}}^{X,N,M})
		+
		&
		b(t_{k-1},\tilde{X}_{t_{k}}^{i,N,M}, \tilde{\mu}_{t_{k-1}}^{X,N,M}) h
		,
		\sigma(t_{k},\tilde{X}_{t_{k}}^{i,N,M}, \tilde{\mu}_{t_{k}}^{X,N,M}) \Delta W_{t_{k}}^{i} \rangle
		\\
		&
		=
		2 \langle \tilde{X}_{t_{k}}^{i,N,M}
		,
		\sigma(t_{k},\tilde{X}_{t_{k}}^{i,N,M}, \tilde{\mu}_{t_{k}}^{X,N,M}) \Delta W_{t_{k}}^{i} \rangle .
	\end{align*}
	In order to obtain the desired form we note
	\begin{align*}
		\sigma(t,x,\mu) \Delta W_{t} = \sum_{a=1}^{l} \sigma_{a}(t,x,\mu) (\Delta W_{t})_{a} \, .
	\end{align*} 
	Crucially $ (\Delta W_{t})_{a}$ is a scalar and standard properties of norms yield,
	\begin{align*}
		|\sigma(t_{k},\tilde{X}_{t_{k}}^{i,N,M}, \tilde{\mu}_{t_{k}}^{X,N,M}) \Delta W_{t_{k}}^{i}|
		\le
		\sum_{a=1}^{l}
		|\sigma_{a}(t_{k},\tilde{X}_{t_{k}}^{i,N,M}, \tilde{\mu}_{t_{k}}^{X,N,M})|
		| \big( \Delta W_{t_{k}}^{i} \big)_{a}| \, .
	\end{align*}
	The bound on $F$ then follows immediately from these results.
\end{proof}

Let us now show the first moment bound result. As is standard with implicit schemes we firstly do this up to a stopping time, hence we define
\begin{align}
\label{Eq:Lambda Stopping Time}
\lambda_{m}^{i} = \inf \{ k: ~ |\tilde{X}_{t_{k}}^{i,N,M}| >m\} .
\end{align}	
One should note that this stopping time does not actually bound $\tilde{X}$ at that point $i$, the best one can do is bound the previous point i.e.\ for $\lambda_{m}^{i} >0$, we have $|\tilde{X}_{\lambda_{m}^{i}-1}^{i,N,M}| \le m$.

\begin{lemma}
	\label{Lem:Moment Bound for Stopped X}
	Let Assumption \ref{Ass:Monotone Assumption}, \ref{Ass:Holder in Time} and  H\ref{Ass:Bounded Measure} (in Assumption \ref{Ass:Extra Implicit Bounds}) hold and fix $h^{*}< 1/\max(L_{b}, 2 \beta)$. Then for any $p \ge 2$ such that $\bE[|X_{0}|^{p}] = C(p) < \infty$, we also have,
	\begin{align*}
	\sup_{1 \le i \le N}
	\bE \big[ | \tilde{X}_{t_{k}}^{i,N,M}|^{p} \1_{\{k \le \lambda_{m}^{i}\}}
	\big]
	\le 
	C(p,m)
	\quad
	\forall k \le M
	~ ~ \text{and } 0 < h \le h^{*}.
	\end{align*} 
\end{lemma}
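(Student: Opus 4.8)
The plan is to establish the bound by a discrete Gr\"onwall recursion on the stopped moments, using as the engine the two estimates of Lemma \ref{Lemma:Results for F}: the fixed‑point bound \eqref{est:FixEquation} and the one‑step recursion \eqref{Eq:Recurrence Bound F}. The stopping time $\lambda_m^i$ is needed only to make every expectation that appears finite a priori. Write $F_k^i:=F\big(t_{k-1},\tilde X_{t_k}^{i,N,M},\tilde\mu_{t_{k-1}}^{X,N,M}\big)$ for $k\ge1$, and note two facts used throughout: first, $\{k\le\lambda_m^i\}\in\cF_{t_{k-1}}$ and on this event $|\tilde X_{t_{k-1}}^{i,N,M}|\le m$; second, $\{k+1\le\lambda_m^i\}=\{k<\lambda_m^i\}\subseteq\{k\le\lambda_m^i\}$, so in \eqref{Eq:Recurrence Bound F} one may weaken $\1_{\{k+1\le\lambda_m^i\}}$ to $\1_{\{k\le\lambda_m^i\}}$ on the nonnegative terms while keeping it sharp on the stochastic term. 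Because $\lambda_m^i$ controls only particle $i$ and not the whole cloud, the empirical‑measure contributions cannot be handled by the indicator; for that reason I would first record the corresponding \emph{unstopped} bound $\sup_{M\ge1}\sup_{1\le i\le N}\sup_{0\le k\le M}\bE[|\tilde X_{t_k}^{i,N,M}|^p]<\infty$. This follows from \eqref{Eq:Recurrence Bound F} without indicators: expand $|F_{k+1}^i|^2=|\tilde X_{t_k}^{i,N,M}+\sigma(t_k,\tilde X_{t_k}^{i,N,M},\tilde\mu_{t_k}^{X,N,M})\Delta W_{t_k}^i|^2$, take expectations (the cross term vanishes by independence and centring of $\Delta W_{t_k}^i$), bound $\sum_a|\sigma_a|^2\le C\big(1+|\tilde X_{t_k}^{i,N,M}|^2+\tfrac1N\sum_l|\tilde X_{t_k}^{l,N,M}|^2\big)$ by Assumption \ref{Ass:Monotone Assumption}, \ref{Ass:Holder in Time} and H\ref{Ass:Bounded Measure}, reduce the cloud term by exchangeability to $\bE[|\tilde X_{t_k}^{i,N,M}|^p]$, and close by discrete Gr\"onwall with $Mh=T$ (finiteness of each term coming from an easy forward induction on $k$ via \eqref{est:FixEquation}).

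Consider first $p=2$ and run the recursion on $g_k:=\sup_{1\le i\le N}\bE\big[|F_k^i|^2\1_{\{k\le\lambda_m^i\}}\big]$. Taking $\bE[\,\cdot\,\1_{\{k+1\le\lambda_m^i\}}]$ in \eqref{Eq:Recurrence Bound F}, the stochastic term $\bE\big[2\langle\tilde X_{t_k}^{i,N,M},\sigma(t_k,\tilde X_{t_k}^{i,N,M},\tilde\mu_{t_k}^{X,N,M})\Delta W_{t_k}^i\rangle\1_{\{k<\lambda_m^i\}}\big]$ vanishes since everything but $\Delta W_{t_k}^i$ is $\cF_{t_k}$‑measurable; the term $\big(\sum_a|\sigma_a|\,|(\Delta W_{t_k}^i)_a|\big)^2$, after conditioning on $\cF_{t_k}$, using $\bE[|(\Delta W_{t_k}^i)_a(\Delta W_{t_k}^i)_b|]\le h$ and the bound on $\sigma$, is at most $Ch\big(1+\bE[|\tilde X_{t_k}^{i,N,M}|^2\1_{\{k<\lambda_m^i\}}]+\bE[\tfrac1N\sum_l|\tilde X_{t_k}^{l,N,M}|^2]\big)$, where the second expectation is absorbed by the preliminary bound and the first is turned, via \eqref{est:FixEquation}, into $C\bE[|F_k^i|^2\1_{\{k<\lambda_m^i\}}]+Ch$ (alternatively bounded by $m^2$); the term $2h\beta|\tilde X_{t_k}^{i,N,M}|^2$ is treated the same way, and $2h\alpha$ is harmless. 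This gives $g_{k+1}\le(1+Ch)g_k+Ch$ with $g_1\le C\big(1+\bE[|X_0|^2]\big)$ (using $|X_0^i|\le m$ on $\{1\le\lambda_m^i\}$), so discrete Gr\"onwall over at most $M=T/h$ steps yields $g_k\le C(m)$, and then \eqref{est:FixEquation} gives $\sup_{1\le i\le N}\bE[|\tilde X_{t_k}^{i,N,M}|^2\1_{\{k\le\lambda_m^i\}}]\le C(m)$ for all $k\le M$ and $0<h\le h^*$; the $k=0$ case is trivial since $\bE[|X_0^i|^2\1_{\{0\le\lambda_m^i\}}]=\bE[|X_0|^2]$.

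The general case $p\ge2$ follows by the same doubling scheme as in the proof of Lemma \ref{Lemma:EsupXMsmallerC}: with $\hat p$ the given exponent, set $q=2p\wedge\hat p$ and bootstrap from the case $p$ (so that $q/2\le p$, whence the claim and the preliminary bound are already available at $q/2$) to the case $q$. By \eqref{est:FixEquation} it suffices to control $\bE[|F_k^i|^q\1_{\{k\le\lambda_m^i\}}]$; from \eqref{Eq:Recurrence Bound F}, $|F_{k+1}^i|^2=a_k^i+c_k^i$ with $a_k^i:=|F_k^i|^2+2h\alpha+2h\beta|\tilde X_{t_k}^{i,N,M}|^2$ being $\cF_{t_k}$‑measurable and $c_k^i$ collecting the two $\Delta W_{t_k}^i$‑dependent increments, and $a_k^i\ge0$, $a_k^i+c_k^i\ge0$. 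The main obstacle is raising this to the power $q/2$ without destroying the martingale cancellation. I would use the convexity estimate
\begin{align*}
(a+c)^{q/2}\le a^{q/2}+\tfrac q2\,a^{q/2-1}c+C_q\,a^{q/2-2}c^2,\qquad a\ge0,\ a+c\ge0,\ q\ge2,
\end{align*}
with the \emph{quadratic} remainder $a^{q/2-2}c^2$ — not $|c|^{q/2}$, which when $q\in(2,4)$ would leave an $O(h^{q/4})$ conditional increment that does not sum to $O(1)$ over the $M=T/h$ steps — whereas both $\bE[c_k^i\mid\cF_{t_k}]$ and $\bE[(c_k^i)^2\mid\cF_{t_k}]$ are $O(h)$, the dominant part of $c_k^i$ being $2\langle\tilde X_{t_k}^{i,N,M},\sigma\Delta W_{t_k}^i\rangle=O(\sqrt h)$. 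Conditioning on $\cF_{t_k}$, using \eqref{est:FixEquation} to express powers of $|\tilde X_{t_k}^{i,N,M}|$ through $a_k^i$, Young's inequality, the bound on $\sigma$, the estimate $|\tilde X_{t_k}^{i,N,M}|^q\1_{\{k<\lambda_m^i\}}\le m^q$, and the preliminary unstopped $q$‑moment bound to remove the cloud term $\bE[\tfrac1N\sum_l|\tilde X_{t_k}^{l,N,M}|^q]=\bE[|\tilde X_{t_k}^{i,N,M}|^q]$, one obtains $\sup_{1\le i\le N}\bE[|F_{k+1}^i|^q\1_{\{k+1\le\lambda_m^i\}}]\le(1+Ch)\sup_{1\le i\le N}\bE[|F_k^i|^q\1_{\{k\le\lambda_m^i\}}]+Ch$; discrete Gr\"onwall and \eqref{est:FixEquation} finish the induction step, and iterating $p\mapsto 2p\wedge\hat p$ until the exponent reaches $\hat p$ proves the lemma. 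The genuinely delicate points are this choice of remainder and the fact that, $\lambda_m^i$ bounding only particle $i$, the empirical‑measure terms must be disposed of through exchangeability and the unstopped moment estimate rather than through the indicator.
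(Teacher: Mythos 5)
Your proposal misses the role of Assumption H\ref{Ass:Bounded Measure} and as a result builds a machine that is both unnecessary and, in two places, broken. The paper's proof is a one\-/step estimate with no recursion at all: by \eqref{Eq:Useful F Relation}, $F(t_k,\tilde X^{i,N,M}_{t_{k+1}},\tilde\mu^{X,N,M}_{t_k})=\tilde X^{i,N,M}_{t_k}+\sigma(t_k,\tilde X^{i,N,M}_{t_k},\tilde\mu^{X,N,M}_{t_k})\Delta W^i_{t_k}$, and H\ref{Ass:Bounded Measure} combined with the Lipschitz\-/in\-/$x$ and H\"older\-/in\-/time assumptions gives $|\sigma_a(t,x,\mu)|\le C(1+|x|)$ \emph{uniformly in} $\mu$ --- there is no cloud term to dispose of. On $\{k+1\le\lambda_m^i\}$ one has $|\tilde X^{i,N,M}_{t_k}|\le m$, so $\bE[|F(\cdot)|^p\1_{\{k+1\le\lambda_m^i\}}]\le C(p)\big(m^p+\bE[|\sigma\Delta W^i_{t_k}|^p\1_{\{k+1\le\lambda_m^i\}}]\big)\le C(p,m)$ directly (Young's inequality plus moments of $\Delta W$), and \eqref{est:FixEquation} transfers this to $\tilde X^{i,N,M}_{t_{k+1}}$; the case $k=0$ is the moment assumption on $X_0$. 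No Gr\"onwall, no martingale cancellation, no bootstrapping, and no use of \eqref{Eq:Recurrence Bound F}.

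Two concrete defects in your route. First, a circularity: your stopped recursion at exponent $q$ disposes of the empirical\-/measure term via the ``preliminary unstopped bound'' $\sup_{M}\sup_k\bE[|\tilde X^{i,N,M}_{t_k}|^q]<\infty$, but you only sketch that bound for $q=2$ (the argument ``take expectations, the cross term vanishes'' does not survive raising \eqref{Eq:Recurrence Bound F} to the power $q/2$), and if you did have it for general $q$ the lemma would follow trivially by dropping the indicator --- the whole point of the stopped statement is that it is provable \emph{without} the unstopped one (the paper proves only the unstopped second moment, in Proposition \ref{Prop:Bounded Implicit Second Moment}, and does so using this lemma). Second, the inequality $(a+c)^{q/2}\le a^{q/2}+\frac q2a^{q/2-1}c+C_qa^{q/2-2}c^2$ is false for $q>4$: fix $a=1$ and let $c\to\infty$; the left side grows like $c^{q/2}$ and the right like $c^2$. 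Since the lemma is later applied with $p=4$ and $p=4(q+1)$, the exponents actually needed lie exactly where your key estimate fails (there one must reinstate a $|c|^{q/2}$ remainder, which is harmless for $q>4$ but which you explicitly discard). The $p=2$ instance of your argument is salvageable but redundant; the general case does not close as written.
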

Using standard notation, $C(a)$ denotes a constant that can depend on variable $a$.

\begin{proof}
	As it turns out the function $F$ in \eqref{Eq:implicitToSolve} gives us a useful bound. From \eqref{Eq:Useful F Relation} we obtain,
	\begin{align*}
		|F(t_{k},\tilde{X}_{t_{k+1}}^{i,N,M}, \tilde{\mu}_{t_{k}}^{X,N,M})|^{p}
		\le
		2^{p-1}
		\big(
		|\tilde{X}_{t_{k}}^{i,N,M}|^{p}
		+
		|\sigma(t_{k},\tilde{X}_{t_{k}}^{i,N,M}, \tilde{\mu}_{t_{k}}^{X,N,M}) \Delta W_{t_{k}}^{i}|^{p}
		\big).
	\end{align*}
	Hence, multiplying with the indicator and taking expected values yields,
	\begin{align*}
		\bE[|F(t_{k},\tilde{X}_{t_{k+1}}^{i,N,M}, \tilde{\mu}_{t_{k}}^{X,N,M})|^{p} \1_{\{k+1 \le \lambda_{m}^{i}\}} ]
		\le
		C(p) \Big( m^{p} 
		+
		\bE \big[|\sigma(t_{k},\tilde{X}_{t_{k}}^{i,N,M}, \tilde{\mu}_{t_{k}}^{X,N,M})\Delta W_{t_{k}}^{i}|^{p} \1_{\{k+1 \le \lambda_{m}^{i}\}} \big] \Big) \, .
	\end{align*} 
	Then we estimate
	\begin{align*}
		& 
		\bE \big[|\sigma(t_{k},\tilde{X}_{t_{k}}^{i,N,M}, \tilde{\mu}_{t_{k}}^{X,N,M})\Delta W_{t_{k}}^{i}|^{p} \1_{\{k+1 \le \lambda_{m}^{i}\}} \big]
		\\
		&
		\qquad
		\le
		\sum_{a=1}^{l} \bE[ | \sigma_{a}(t_{k},\tilde{X}_{t_{k}}^{i,N,M}, \tilde{\mu}_{t_{k}}^{X,N,M})|^{2p} \1_{\{ k+1 \le \lambda_{m}^{i}\}}] 
		+
		\bE[|(\Delta W_{t_{k}}^{i})_{a}|^{2p}] \, .
	\end{align*}
	Using the bounds on each coefficient of $\sigma$, it is straightforward to observe,
	\begin{align*}
		| \sigma_{a}(t_{k},\tilde{X}_{t_{k}}^{i,N,M}, \tilde{\mu}_{t_{k}}^{X,N,M})|^{2p}
		\le
		C(p) \big( 1+ |\tilde{X}_{t_{k}}^{i,N,M}|^{2p} \big) \, .
	\end{align*}
	Using this bound we obtain,
	\begin{align*}
		\bE[|F(t_{k},\tilde{X}_{t_{k+1}}^{i,N,M}, \tilde{\mu}_{t_{k}}^{X,N,M})|^{p} \1_{\{k+1 \le \lambda_{m}^{i}\}} ]
		\le
		C(p,m) \, .
	\end{align*}
	Rewriting the quantity we wish to bound as
	\begin{align*}
		\bE \big[ | \tilde{X}_{t_{k}}^{i,N,M}|^{p} \1_{\{k \le \lambda_{m}^{i}\}}
		\big]
		&=
		\bE \big[ | \tilde{X}_{t_{k}}^{i,N,M}|^{p} \1_{\{k \le \lambda_{m}^{i}, k>0\}}
		\big]
		+
		\bE \big[ | \tilde{X}_{t_{0}}^{i,N,M}|^{p} \1_{\{k = 0, ~ \lambda_{m}^{i}=0\}}
		\big] 
		\le
		C(p,m) \, ,
	\end{align*}
	where the inequality follows from Estimate \eqref{est:FixEquation}, our bound on $F$, and the assumption that $X_{0} \in L^{p} (\bR^{d})$. Again, the corresponding bound is independent of the choice of $i$ and  hence the result holds for the supremum over $i$.
\end{proof}

Although the previous bound is useful, the presence of the stopping time is inconvenient. We therefore remove it and show the second moment is bounded. 
\begin{proposition}
	\label{Prop:Bounded Implicit Second Moment}
	Let Assumption \ref{Ass:Monotone Assumption}, \ref{Ass:Holder in Time} and H\ref{Ass:Bounded Measure} (in Assumption \ref{Ass:Extra Implicit Bounds}) hold and fix $h^{*}< 1/\max(L_{b}, 2 \beta)$. Further assume that $X_{0} \in L^{4}(\bR^{d})$. Then,
	\begin{align*}
	\sup_{1 \le i \le N} \sup_{h \le h^{*}} \sup_{0 \le k \le M} 
	\bE[ |\tilde{X}_{t_{k}}^{i,N,M}|^{2}]
	\le 
	C \, .
	\end{align*}
\end{proposition}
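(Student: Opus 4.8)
The plan is to adapt the classical stopping-time argument for implicit schemes (in the spirit of \cite{MaoSzpruch2013}) to the interacting particle system, using the two estimates of Lemma \ref{Lemma:Results for F}, and then to \emph{remove} the stopping time through a bound uniform in the truncation level $m$ followed by a Fatou passage to the limit. Throughout, $h \le h^{*} < 1/\max(L_b, 2\beta)$, so the factor $(1-2h\beta)^{-1}$ appearing in \eqref{est:FixEquation} is bounded by a constant depending only on $h^{*}$ and $\beta$. Since $F(t_k,\cdot,\nu)$ is a bijection of $\bR^{d}$ for every $t_k$ and $\nu$ by Lemma \ref{Lemma:Results for F}, the scheme is well defined and an induction on $k$ shows that each $\tilde{X}^{i,N,M}_{t_k}$ is a.s.\ finite; hence the stopping times $\lambda^{i}_{m}$ from \eqref{Eq:Lambda Stopping Time}, as well as the common stopping time $\Lambda_m := \min_{1 \le i \le N}\lambda^{i}_{m}$, increase to $+\infty$ almost surely as $m \to \infty$.

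The first step is a one-step estimate. I would apply the recursion \eqref{Eq:Recurrence Bound F}, multiply it by $\1_{\{k+1 \le \Lambda_m\}}$ (which is $\cF_{t_k}$-measurable) and take expectations. The discrete martingale increment $2\langle \tilde{X}^{i,N,M}_{t_k}, \sigma(t_k,\tilde{X}^{i,N,M}_{t_k},\tilde{\mu}^{X,N,M}_{t_k})\Delta W^{i}_{t_k}\rangle$ then has zero mean, because $\Delta W^{i}_{t_k}$ is centred and independent of $\cF_{t_k}$ while all other factors are $\cF_{t_k}$-measurable; conditioning on $\cF_{t_k}$ and using $\bE[|(\Delta W^{i}_{t_k})_a| \, |(\Delta W^{i}_{t_k})_b|] \le h$ together with Cauchy--Schwarz over the Brownian coordinates controls the $\sigma$-quadratic term by $Ch\,\bE[\sum_a |\sigma_a(t_k,\cdot)|^{2}\1_{\{k+1\le\Lambda_m\}}]$, and the growth of $\sigma$ (from H\ref{Ass:Bounded Measure} and Assumptions \ref{Ass:Monotone Assumption}, \ref{Ass:Holder in Time}, by the same computation as in Lemma \ref{lemma:EstItoPartial}) bounds this by $Ch(1 + \bE[|\tilde{X}^{i,N,M}_{t_k}|^{2}\1_{\{k+1\le\Lambda_m\}}] + \bE[\frac1N\sum_{l=1}^N |\tilde{X}^{l,N,M}_{t_k}|^{2}\1_{\{k+1\le\Lambda_m\}}])$. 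Since $\{k+1\le\Lambda_m\}\subseteq\{k\le\Lambda_m\}$, all of these — and the term $|F(t_{k-1},\tilde{X}^{i,N,M}_{t_k},\tilde{\mu}^{X,N,M}_{t_{k-1}})|^{2}\1_{\{k+1\le\Lambda_m\}}$ carried over from \eqref{Eq:Recurrence Bound F} — are dominated by their analogues with the indicator $\1_{\{k\le\Lambda_m\}}$.

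The delicate point, and the reason a per-particle stopping time $\lambda^{i}_{m}$ is not enough, is the interaction term $\frac1N\sum_{l} |\tilde{X}^{l,N,M}_{t_k}|^{2}\1_{\{k+1\le\Lambda_m\}}$: it couples a sum over \emph{all} particles to a \emph{single} indicator, and $\lambda^{i}_{m}$ conveys nothing about $\tilde{X}^{l,N,M}$ for $l\neq i$. I would dispose of it by summing the one-step inequality over $i = 1, \ldots, N$: as $\Lambda_m$ does not depend on $i$, the interaction terms collapse, $\sum_{i=1}^{N}\frac1N\sum_{l} |\tilde{X}^{l,N,M}_{t_k}|^{2}\1_{\{k+1\le\Lambda_m\}} = \sum_{l} |\tilde{X}^{l,N,M}_{t_k}|^{2}\1_{\{k+1\le\Lambda_m\}}$. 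Writing $S_k := \sum_{i=1}^{N}\bE[|\tilde{X}^{i,N,M}_{t_k}|^{2}\1_{\{k\le\Lambda_m\}}]$ and $G_k := \sum_{i=1}^{N}\bE[|F(t_{k-1},\tilde{X}^{i,N,M}_{t_k},\tilde{\mu}^{X,N,M}_{t_{k-1}})|^{2}\1_{\{k\le\Lambda_m\}}]$, the summed one-step estimate reads $G_{k+1}\le G_k + ChS_k + ChN$, while \eqref{est:FixEquation} (multiplied by $\1_{\{k\le\Lambda_m\}}$ and summed over $i$) gives $S_k\le C(G_k + Nh)$; combining, $G_{k+1}\le(1+Ch)G_k + ChN$ with $G_1\le CN(1+\bE[|X_0|^{2}])<\infty$ by the integrability of $X_0$ and the growth of $\sigma$. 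The discrete Gr\"onwall lemma then yields $G_k\le CN$ and thus $S_k\le CN$, with $C$ depending only on $T$, $\alpha$, $\beta$, the Lipschitz constants and $\bE[|X_0|^{2}]$ — crucially, independent of $m$, $N$, $h$ and $k$.

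Finally, since the particles are identically distributed and $\Lambda_m$ is symmetric in the particle index, $\bE[|\tilde{X}^{i,N,M}_{t_k}|^{2}\1_{\{k\le\Lambda_m\}}] = \frac1N S_k\le C$ for each $i$; as $m\to\infty$ we have $\1_{\{k\le\Lambda_m\}}\uparrow 1$ almost surely, so Fatou's lemma gives $\bE[|\tilde{X}^{i,N,M}_{t_k}|^{2}]\le C$ uniformly in $i$, in $h\le h^{*}$ and in $0\le k\le M$, which is the assertion. I expect the genuine obstacle to be exactly the coupling of the mean-field term with the stopping time described above; passing to the common stopping time $\Lambda_m$ resolves it cleanly, whereas keeping the per-particle $\lambda^{i}_{m}$ forces one to split $\frac1N\sum_{l}|\tilde{X}^{l,N,M}_{t_k}|^{2}\1_{\{k+1\le\lambda^{i}_{m}\}}$ according to whether $\lambda^{l}_{m}$ has already occurred and to control the ``already exceeded'' part by Cauchy--Schwarz with the fourth-moment bound of Lemma \ref{Lem:Moment Bound for Stopped X} and a tail estimate of the kind in Proposition \ref{Prop:Particle System Bounds} — the route in which the hypothesis $X_0\in L^{4}(\bR^{d})$ is really needed.
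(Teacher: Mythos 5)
Your proof is correct, but it is organised around an obstacle that does not actually arise here, and it consequently takes a different route from the paper. The paper keeps the \emph{per-particle} stopping time $\lambda_m^i$ of \eqref{Eq:Lambda Stopping Time}, multiplies \eqref{Eq:Recurrence Bound F} by $\1_{\{\lambda_m^i>0\}}$, telescopes the $F$-terms by summing $k$ up to the \emph{random} index $K\wedge\lambda_m^i$, kills the stochastic sum by optional stopping (this is precisely where Lemma \ref{Lem:Moment Bound for Stopped X} with $p=4$, hence $X_0\in L^4$, is invoked to justify integrability), and then runs discrete Gr\"onwall in $K$ and Fatou in $m$. Crucially, no interaction term $\frac1N\sum_j|\tilde X^{j,N,M}_{t_k}|^2$ ever appears in that argument: under H\ref{Ass:Bounded Measure} one has $|\sigma(t,x,\mu)|\le C(1+|x|)$ and the monotone growth of Remark \ref{rem:MonotoneGrowth} uniformly in $\mu$ (go via $\sigma(t,x,\mu)\to\sigma(t,0,\mu)\to\sigma(0,0,\mu)$, never comparing two measures), so \eqref{Eq:Recurrence Bound F} involves only particle $i$ and $\lambda_m^i$ suffices. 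Your bound containing $\frac1N\sum_l|\tilde X^{l}_{t_k}|^2$ is a valid but needlessly weak majorisation borrowed from Lemma \ref{lemma:EstItoPartial}, which does not assume H\ref{Ass:Bounded Measure}; the common stopping time $\Lambda_m=\min_i\lambda_m^i$ and the summation over $i$ are a correct way to absorb it, but they are solving a problem you created. Your formulation does buy something: with the deterministic index $k$ and the $\cF_{t_k}$-measurable indicator $\1_{\{k+1\le\Lambda_m\}}$ every factor in the martingale increment is bounded by a function of $m$, so the expectation vanishes by elementary conditioning and you never need the $L^4$ machinery of Lemma \ref{Lem:Moment Bound for Stopped X} — your last sentence misplaces where $X_0\in L^4$ enters the paper's proof (it is the optional-stopping/integrability step, not a tail estimate). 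The coupling of the empirical measure with stopping times that you flag as ``the genuine obstacle'' is real, but it surfaces later, in Lemma \ref{Lem:Strong ForwardBackward Particle Conv}, where the paper handles it by splitting on $\{s\le\theta_m^j\}$ versus $\{s>\theta_m^j\}$ and using Cauchy--Schwarz with Propositions \ref{Prop:Particle System Bounds} and \ref{Prop:Bounded Implicit Second Moment} — not in the present moment bound.
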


\begin{proof}
	Firstly let us take a nonnegative integer $K$, such that $Kh \le T$.
	Now let us consider \eqref{Eq:Recurrence Bound F}. One can note that this bound still holds where the $F$ terms are multiplied by $\1_{\{ \lambda_{m}^{i}>0 \}}$ (since both sides are nonnegative and the indicator is bounded above by one). Summing both sides from $k=1$ to $K \wedge \lambda_{m}^{i}$, noting that the $F$ terms cancel, we obtain,
	\begin{align*}
		& |F(t_{K \wedge \lambda_{m}^{i}},\tilde{X}_{t_{(K \wedge \lambda_{m}^{i})+1}}^{i,N,M}, \tilde{\mu}_{t_{K \wedge \lambda_{m}^{i}}}^{X,N,M})|^{2} 
		\1_{\{ \lambda_{m}^{i}>0 \}}
		\\
		&
		\le
		|F(t_{0},\tilde{X}_{t_{1}}^{i,N,M}, \tilde{\mu}_{t_{0}}^{X,N,M})|^{2}
		\1_{\{ \lambda_{m}^{i}>0 \}}
		+
		\sum_{k=1}^{K \wedge \lambda_{m}^{i}} \big(
		2h \alpha
		+ 
		2 h \beta |\tilde{X}_{t_{k}}^{i,N,M}|^{2}
		\1_{\{ \lambda_{m}^{i}>0 \}}
		\big)
		\\
		&
		~
		+
		\sum_{k=1}^{K \wedge \lambda_{m}^{i}}
		\Big( \sum_{a=1}^{l}|\sigma_{a}(t_{k},\tilde{X}_{t_{k}}^{i,N,M}, \tilde{\mu}_{t_{k}}^{X,N,M})| |\big(\Delta W_{t_{k}}^{i} \big)_{a}|\Big)^{2}
		\1_{\{ \lambda_{m}^{i}>0 \}}
		\\
		&
		~
		+
		\sum_{k=1}^{K \wedge \lambda_{m}^{i}}
		2
		\langle \tilde{X}_{t_{k}}^{i,N,M},
		\sigma(t_{k},\tilde{X}_{t_{k}}^{i,N,M}, \tilde{\mu}_{t_{k}}^{X,N,M}) \Delta W_{t_{k}}^{i} \rangle
		\1_{\{ \lambda_{m}^{i}>0 \}}
		\, ,	
	\end{align*}
	where we use the convention $\sum_{k=1}^{0} \cdot =0$.
	Although the stopping time is useful it is not ideal that it appears on the sum. However, for nonnegative terms it is straightforward to take the stopping time into the coefficients and the stochastic term can be rewritten as
	\begin{align*}
		& 
		\sum_{k=1}^{K \wedge \lambda_{m}^{i}}
		2
		\langle \tilde{X}_{t_{k}}^{i,N,M},
		\sigma(t_{k},\tilde{X}_{t_{k}}^{i,N,M}, \tilde{\mu}_{t_{k}}^{X,N,M}) \Delta W_{t_{k}}^{i} \rangle \1_{\{ \lambda_{m}^{i}>0 \}}
		=
		\sum_{k=1}^{K}
		2
		\langle \tilde{X}_{t_{k}}^{i,N,M},
		\sigma(t_{k},\tilde{X}_{t_{k}}^{i,N,M}, \tilde{\mu}_{t_{k}}^{X,N,M}) \Delta W_{t_{k}}^{i} \rangle \1_{\{k \le \lambda_{m}^{i}\}}.
	\end{align*}
Taking expectations and noting, by Lemma \ref{Lem:Moment Bound for Stopped X}, that $\tilde{X}_{t_{k}}^{i,N,M} \1_{\{k \le \lambda_{m}^{i}\}} \in L_{t_{k}}^{4}(\bR^{d})$ we conclude this term to be a martingale. We therefore obtain the following bound,
	\begin{align*}
		& \bE[|F(t_{K \wedge \lambda_{m}^{i}},\tilde{X}_{t_{(K \wedge \lambda_{m}^{i})+1}}^{i,N,M}, \tilde{\mu}_{t_{K \wedge \lambda_{m}^{i}}}^{X,N,M})|^{2} 
		\1_{\{ \lambda_{m}^{i}>0 \}}]
		\\
		&
		\le
		\bE \big[
		|F(t_{0},\tilde{X}_{t_{1}}^{i,N,M}, \tilde{\mu}_{t_{0}}^{X,N,M})|^{2} \big]
		+
		2 \alpha T
		+
		\sum_{k=1}^{K} 
		2 h \beta \bE \big[|\tilde{X}_{t_{k\wedge \lambda_{m}^{i}}}^{i,N,M}|^{2} \1_{\{ \lambda_{m}^{i}>0 \}} \big]
		\\
		&
		+
		\sum_{k=1}^{K}
		\bE \Big[
		\Big( \sum_{a=1}^{l}|\sigma_{a}(t_{k \wedge \lambda_{m}^{i}},\tilde{X}_{t_{k \wedge \lambda_{m}^{i}}}^{i,N,M}, \tilde{\mu}_{t_{k \wedge \lambda_{m}^{i}}}^{X,N,M})| |\big(\Delta W_{t_{k \wedge \lambda_{m}^{i}}}^{i} \big)_{a}|\Big)^{2} 
		\1_{\{ \lambda_{m}^{i}>0 \}}
		\Big]
		\, .
	\end{align*}
	The idea is to apply the discrete version of Gr\"onwall's inequality to this (see for example \cite[pg. 436]{MitrinovicEtAl2012} or \cite[Lemma 3.4]{MaoSzpruch2013}), which requires our bound to be in terms of $F$. Using arguments similar to previous ones  
	\begin{align*}
		& \bE \Big[
		\Big( \sum_{a=1}^{l}|\sigma_{a}(t_{k \wedge \lambda_{m}^{i}},\tilde{X}_{t_{k \wedge \lambda_{m}^{i}}}^{i,N,M}, \tilde{\mu}_{t_{k \wedge \lambda_{m}^{i}}}^{X,N,M})| |\big(\Delta W_{t_{k \wedge \lambda_{m}^{i}}}^{i} \big)_{a}|\Big)^{2} 
		\1_{\{ \lambda_{m}^{i}>0 \}}
		\Big]
		\\
		&
		\le
		C
		\sum_{a=1}^{l}\bE \Big[|\sigma_{a}(t_{k \wedge \lambda_{m}^{i}},\tilde{X}_{t_{k \wedge \lambda_{m}^{i}}}^{i,N,M}, \tilde{\mu}_{t_{k \wedge \lambda_{m}^{i}}}^{X,N,M})|^{2}
		|\big(\Delta W_{t_{k \wedge \lambda_{m}^{i}}}^{i} \big)_{a}|^{2}
		\1_{\{ \lambda_{m}^{i}>0 \}} \Big]
		\le
		C
		\sum_{a=1}^{l}h \big(1+ 
		\bE \big[|\tilde{X}_{t_{k \wedge \lambda_{m}^{i}}}^{i,N,M}|^{2} \1_{\{ \lambda_{m}^{i}>0 \}} \big]
		\big) \, ,
	\end{align*}
	where we have used independence of $\sigma(\cdot) \1_{\{ \lambda_{m}^{i}>0 \}}$ and $\Delta W$ along with the growth bounds on $\sigma$ to obtain the final inequality. Combining this with our previous bounds and appealing again to Lemma \ref{Lemma:Results for F} (to bound $\tilde{X}$ by $F$) we obtain,
	\begin{align*}
		& \bE[|F(t_{K \wedge \lambda_{m}^{i}},\tilde{X}_{t_{(K \wedge \lambda_{m}^{i})+1}}^{i,N,M}, \tilde{\mu}_{t_{K \wedge \lambda_{m}^{i}}}^{X,N,M})|^{2} 
		\1_{\{ \lambda_{m}^{i}>0 \}}]
		\\
		&
		\le
		\bE \big[
		|F(t_{0},\tilde{X}_{t_{1}}^{i,N,M}, \tilde{\mu}_{t_{0}}^{X,N,M})|^{2} \big]
		+
		C
		+
		\sum_{k=1}^{K} 
		C h \bE \big[|\tilde{X}_{t_{k\wedge \lambda_{m}^{i}}}^{i,N,M}|^{2}
		\1_{\{ \lambda_{m}^{i}>0 \}} \big]
		\\
		&
		\le
		\bE \big[
		|F(t_{0},\tilde{X}_{t_{1}}^{i,N,M}, \tilde{\mu}_{t_{0}}^{X,N,M})|^{2} \big]
		+
		C(1+\frac{h}{1-2h \beta})
		\\
		&
		\qquad
		+
		\sum_{k=1}^{K} 
		C \frac{h}{1-2h \beta} \bE \big[ 
		|F(t_{(k \wedge \lambda_{m}^{i})-1},\tilde{X}_{t_{k \wedge \lambda_{m}^{i}}}^{i,N,M}, \tilde{\mu}_{t_{(k \wedge \lambda_{m}^{i})-1}}^{X,N,M})|^{2} 
		\1_{\{ \lambda_{m}^{i}>0 \}}
		\big]
		\, .
	\end{align*}
	Applying a discrete version of the Gr\"onwall inequality and noting $\sum_{k=1}^{K}1 \le T/h$ yields
	\begin{align*}
		& \bE[|F(t_{K \wedge \lambda_{m}^{i}},\tilde{X}_{t_{(K \wedge \lambda_{m}^{i})+1}}^{i,N,M}, \tilde{\mu}_{t_{K \wedge \lambda_{m}^{i}}}^{X,N,M})|^{2} 
		\1_{\{ \lambda_{m}^{i}>0 \}}]
		\\
		&
		\le
		\Big(
		\bE \big[
		|F(t_{0},\tilde{X}_{t_{1}}^{i,N,M}, \tilde{\mu}_{t_{0}}^{X,N,M})|^{2} \big]
		+
		C\big( 1+\frac{h}{1-2h \beta} \big)
		\Big)
		\exp\Big( \frac{C}{1-2h \beta} \Big)
		\, .
	\end{align*}
	Recalling \eqref{Eq:Useful F Relation}, we can apply the same arguments as before to obtain the bound
	\begin{align*}
		\bE \big[
		|F(t_{0},\tilde{X}_{t_{1}}^{i,N,M}, \tilde{\mu}_{t_{0}}^{X,N,M})|^{2} \big]
		\le
		C(1+ (1+h) \bE[|\tilde{X}_{t_{0}}^{i,N,M}|^{2}] ) \, .
	\end{align*}
	Noting that our bound for $F$ is now independent of $m$, we can use Fatou's lemma to take the limit and obtain (for $K  \ge 1$),
	\begin{align*}
		\bE[|F(t_{K},\tilde{X}_{t_{K+1}}^{i,N,M}, \tilde{\mu}_{t_{K}}^{X,N,M})|^{2} 
		]
		\le
		C \Big(
		1+ (1+h) \bE[|\tilde{X}_{t_{0}}^{i,N,M}|^{2}]
		+
		\frac{h}{1-2h \beta}
		\Big)
		\exp\Big( \frac{C}{1-2h \beta} \Big)
		\, .
	\end{align*}
	Again by Lemma \ref{Lemma:Results for F}, the LHS of the latter inequality bounds $\tilde{X}_{t_{K+1}}^{i,N,M}$ (with some constant), hence we obtain a bound for $\tilde{X}_{t_{k}}^{i,N,M}$ for $k \ge 2$. By assumption $\tilde{X}_{t_{0}}^{i,N,M}$ has second moment therefore we need to obtain a bound for $\tilde{X}_{t_{1}}^{i,N,M}$. This is not difficult to obtain using again that we can bound $\tilde{X}$ as follows,
	\begin{align*}
		\bE \big[
		|\tilde{X}_{t_{1}}^{i,N,M}|^{2} \big]
		\le
		\big(1-2h \beta \big)^{-1} \Big( 
		2 h \alpha + \bE \big[
		|F(t_{0},\tilde{X}_{t_{1}}^{i,N,M}, \tilde{\mu}_{t_{0}}^{X,N,M})|^{2} \big] \Big) ,
	\end{align*}
	then we can apply the same bound on $F$ as above.
	
	In order to complete the proof, we also need to show that this bound exists for all $i$ and $0 < h \le h^{*}$. One can see immediately that all bounds decrease as $h$ decreases, hence the supremum value is to set $h=h^{*}$, which is also finite since $h^{*}<1/(2 \beta)$. The supremum over $i$ follows from the fact that all bounds are independent of $i$.
\end{proof}

Now that we have established a bound on the second moment, we look to show convergence of this scheme to the true particle system solution. As always with discrete schemes it is beneficial to introduce their continuous counterpart. As it turns out doing it naively for implicit schemes leads to measurability problems, hence one introduces the so-called forward backward scheme
\begin{equation*}
\hat{X}^{i,N,M}_{t_{k+1}} 
= 
\hat{X}^{i,N,M}_{t_{k}} 
+ 
b \left( t_{k-1 \vee 0}, \tilde{X}_{t_{k}}^{i,N,M}, \tilde{\mu}_{t_{k-1 \vee 0}}^{X,N,M} \right) h 
+ 
\sigma \left( t_{k}, \tilde{X}_{t_{k}}^{i,N,M}, \tilde{\mu}_{t_{k}}^{i,N,M} \right) \Delta W_{t_{k}}^{i}, 
\end{equation*}
where $\hat{X}_{0}^{i,N,M} = X_{0}^{i}$ and  $\vee$ denotes the maximum. The scheme's continuous time version is
\begin{align}
\label{Eq:ForwardBackward Scheme}
\hat{X}^{i,N,M}_{t} = 
X_{0}^{i}
&+
\int_{0}^{t} b \left( (\kappa(s)-h) \vee 0, \tilde{X}_{\kappa(s)}^{i,N,M}, \tilde{\mu}_{(\kappa(s) - h) \vee 0}^{X,N,M} \right) \dd s
+
\int_{0}^{t} \sigma \left( \kappa(s), \tilde{X}_{\kappa(s)}^{i,N,M}, \tilde{\mu}_{\kappa(s)}^{i,N,M} \right) \dd W_{s}^{i}.
\end{align}
The first result we present is that the discrete and continuous versions stay close to one another, up to the stopping time \eqref{Eq:Lambda Stopping Time}.
\begin{lemma}
	\label{Lem:Discrete and Continuous Implicit}
	Let Assumption \ref{Ass:Monotone Assumption}, \ref{Ass:Holder in Time} and H\ref{Ass:Bounded Measure} (in Assumption \ref{Ass:Extra Implicit Bounds}) hold and fix $h^{*}< 1/\max(L_{b}, 2 \beta)$. Further assume $X_{0} \in L^{4(q+1)}( \bR^{d})$. Then for $1 \le p \le 4$ the following holds for $0 < h \le h^{*}$,
	\begin{align*}
	\sup_{1 \le i \le N}
	\sup_{0 \le k \le M}
	\bE \big[ |\hat{X}_{t_{k}}^{i,N,M} - \tilde{X}_{t_{k}}^{i,N,M}|^{p} 
	\1_{\{ k \le \lambda_{m}^{i}\}} \big]
	\le 
	C(m,p)h^{p} \, .
	\end{align*}
	Moreover, we also have the following relation between $\hat{X}$ and $F$ for all $1 \le k \le M$,
	\begin{align}
	\label{Eq:ForwardBackward F Relation}
	|\hat{X}_{t_{k}}^{i,N,M}|^{2} 
	\ge
	\frac{1}{2} |F(t_{k-1},\tilde{X}_{t_{k}}^{i,N,M}, \tilde{\mu}_{t_{k-1}}^{X,N,M})|^{2}
	-
	|b(t_{0},\tilde{X}_{t_{0}}^{i,N,M}, \tilde{\mu}_{t_{0}}^{X,N,M}) h|^{2} \, .
	\end{align}
\end{lemma}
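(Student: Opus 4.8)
The plan is to first extract an \emph{exact} closed-form expression for $\hat X^{i,N,M}_{t_k}-\tilde X^{i,N,M}_{t_k}$ from a telescoping cancellation between the two recursions, and then read off both assertions. Writing the one-step increments of the discrete forward--backward scheme (the discrete counterpart of \eqref{Eq:ForwardBackward Scheme}) and of the implicit scheme \eqref{Eq:implicitScheme}, I observe that their diffusion contributions $\sigma(t_k,\tilde X^{i,N,M}_{t_k},\tilde\mu^{X,N,M}_{t_k})\Delta W^i_{t_k}$ are identical; hence, with $e_k:=\hat X^{i,N,M}_{t_k}-\tilde X^{i,N,M}_{t_k}$ one has $e_0=0$ and
\begin{align*}
e_{k+1}-e_k
&= h\,b\big(t_{(k-1)\vee 0},\tilde X^{i,N,M}_{t_k},\tilde\mu^{X,N,M}_{t_{(k-1)\vee 0}}\big) \\
&\quad - h\,b\big(t_k,\tilde X^{i,N,M}_{t_{k+1}},\tilde\mu^{X,N,M}_{t_k}\big).
\end{align*}
The key (and essentially only) insight is that, for $k\ge1$, the positive term $+h\,b(t_{k-1},\tilde X^{i,N,M}_{t_k},\tilde\mu^{X,N,M}_{t_{k-1}})$ in $e_{k+1}-e_k$ is literally the negative term $-h\,b(t_{k-1},\tilde X^{i,N,M}_{t_k},\tilde\mu^{X,N,M}_{t_{k-1}})$ appearing in $e_k-e_{k-1}$, so summing over $k$ telescopes to give, for $k\ge1$,
\begin{align*}
\hat X^{i,N,M}_{t_k}-\tilde X^{i,N,M}_{t_k}
&= h\,b\big(t_0,\tilde X^{i,N,M}_{t_0},\tilde\mu^{X,N,M}_{t_0}\big) \\
&\quad - h\,b\big(t_{k-1},\tilde X^{i,N,M}_{t_k},\tilde\mu^{X,N,M}_{t_{k-1}}\big).
\end{align*}
Recalling $F(t_{k-1},x,\mu)=x-b(t_{k-1},x,\mu)h$ from \eqref{Eq:implicitToSolve}, this is the same as $\hat X^{i,N,M}_{t_k}=F(t_{k-1},\tilde X^{i,N,M}_{t_k},\tilde\mu^{X,N,M}_{t_{k-1}})+b(t_0,\tilde X^{i,N,M}_{t_0},\tilde\mu^{X,N,M}_{t_0})h$.

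The bound \eqref{Eq:ForwardBackward F Relation} is then immediate from this last identity together with $|a+c|^2\le 2|a|^2+2|c|^2$:
\begin{align*}
|F(t_{k-1},\tilde X^{i,N,M}_{t_k},\tilde\mu^{X,N,M}_{t_{k-1}})|^2
&= |\hat X^{i,N,M}_{t_k}-b(t_0,\tilde X^{i,N,M}_{t_0},\tilde\mu^{X,N,M}_{t_0})h|^2 \\
&\le 2|\hat X^{i,N,M}_{t_k}|^2+2|b(t_0,\tilde X^{i,N,M}_{t_0},\tilde\mu^{X,N,M}_{t_0})h|^2,
\end{align*}
and a rearrangement gives the claim.

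For the $L^p$-estimate I would raise the closed-form identity to the power $p$, multiply by $\1_{\{k\le\lambda^i_m\}}$ and take expectations, reducing matters to the two bounds $\bE[|b(t_0,\tilde X^{i,N,M}_{t_0},\tilde\mu^{X,N,M}_{t_0})|^p]\le C$ and $\sup_k\bE[|b(t_{k-1},\tilde X^{i,N,M}_{t_k},\tilde\mu^{X,N,M}_{t_{k-1}})|^p\1_{\{k\le\lambda^i_m\}}]\le C(m,p)$. For both I would bound the drift using its polynomial growth in $x$, its $W^{(2)}$-Lipschitz dependence in the measure, its $1/2$-H\"older dependence in time (Assumptions \ref{Ass:Monotone Assumption}, \ref{Ass:Holder in Time}) and H\ref{Ass:Bounded Measure}, obtaining $|b(t_{k-1},x,\tilde\mu^{X,N,M}_{t_{k-1}})|\le C\big(1+|x|^{q+1}+(\tfrac1N\sum_{j}|\tilde X^{j,N,M}_{t_{k-1}}|^2)^{1/2}\big)$. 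The $t_0$-term is finite because $\tilde\mu^{X,N,M}_{t_0}$ is built from i.i.d.\ copies of $X_0\in L^{4(q+1)}$ and $(q+1)p\le 4(q+1)$ for $p\le4$ (Jensen handles the empirical average). In the other term, on $\{k\le\lambda^i_m\}$ the quantity $\bE[|\tilde X^{i,N,M}_{t_k}|^{(q+1)p}\1_{\{k\le\lambda^i_m\}}]$ is controlled by Lemma \ref{Lem:Moment Bound for Stopped X} (applicable since $(q+1)p\le 4(q+1)$ for $p\le4$), and the empirical-measure contribution by the unstopped scheme moment bounds combined with exchangeability of the particles and Jensen. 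This yields $\bE[|\hat X^{i,N,M}_{t_k}-\tilde X^{i,N,M}_{t_k}|^p\1_{\{k\le\lambda^i_m\}}]\le C(m,p)h^p$, and taking the supremum over $0\le k\le M$ and $1\le i\le N$ (all constants being $i$-independent) finishes the argument.

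The hardest point is precisely that empirical-measure contribution $(\tfrac1N\sum_{j}|\tilde X^{j,N,M}_{t_{k-1}}|^2)^{p/2}$: the single-particle stopping time $\lambda^i_m$ from \eqref{Eq:Lambda Stopping Time} does not control the other $N-1$ particles, so it cannot be localized and one must fall back on genuine (unstopped) moment bounds for the implicit scheme --- the recurring tension between stopping-time localization and mean-field interaction that runs through the paper. For $p\le2$ this is exactly Proposition \ref{Prop:Bounded Implicit Second Moment}; for $2<p\le4$ one needs the matching fourth-moment estimate, whose proof mirrors that of Proposition \ref{Prop:Bounded Implicit Second Moment} and draws on the stronger hypothesis $X_0\in L^{4(q+1)}$ (note $q\ge2$, so $4(q+1)\ge12$).
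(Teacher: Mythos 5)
Your telescoping identity $\hat X^{i,N,M}_{t_k}-\tilde X^{i,N,M}_{t_k}=\big(b(t_0,\tilde X^{i,N,M}_{t_0},\tilde\mu^{X,N,M}_{t_0})-b(t_{k-1},\tilde X^{i,N,M}_{t_k},\tilde\mu^{X,N,M}_{t_{k-1}})\big)h$ and the derivation of \eqref{Eq:ForwardBackward F Relation} from it coincide with the paper's argument (the paper uses the reverse triangle inequality plus Young's inequality where you use $|a+c|^2\le 2|a|^2+2|c|^2$; same constants). The problem is in the moment estimate. You bound the drift by $C\big(1+|x|^{q+1}+(\tfrac1N\sum_j|\tilde X^{j,N,M}_{t_{k-1}}|^2)^{1/2}\big)$, i.e.\ you control the measure argument through the $W^{(2)}$-Lipschitz property, which leaves an empirical average over all $N$ particles inside the expectation. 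As you yourself observe, the single-particle stopping time $\lambda^i_m$ cannot localize that term, so for $2<p\le 4$ you need an \emph{unstopped} bound $\sup_{h,k,j}\bE[|\tilde X^{j,N,M}_{t_k}|^{p}]<\infty$ — a fourth-moment analogue of Proposition \ref{Prop:Bounded Implicit Second Moment} that neither the paper nor you proves. Asserting that its proof ``mirrors'' that of Proposition \ref{Prop:Bounded Implicit Second Moment} is not sufficient: that proposition's proof is itself a page of work (the $F$-recursion, a discrete Gr\"onwall argument, a Fatou passage), and its extension to higher moments is a genuine auxiliary result. As written, your argument is complete only for $p\le 2$.

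The detour is avoidable, and this is exactly what H\ref{Ass:Bounded Measure} is for. Writing $|b(t,x,\mu)|\le |b(t,x,\mu)-b(t,0,\mu)|+|b(t,0,\mu)-b(0,0,\mu)|+|b(0,0,\mu)|$ and using the local Lipschitz/polynomial growth bound (which holds uniformly in $\mu$), the $1/2$-H\"older continuity in time (uniform in $x$ and $\mu$) and H\ref{Ass:Bounded Measure}, one obtains $|b(t,x,\mu)|\le C(1+t^{1/2}+|x|^{q+1})$ with no measure term at all. The difference $|\hat X^{i,N,M}_{t_k}-\tilde X^{i,N,M}_{t_k}|^{p}\1_{\{k\le\lambda^i_m\}}$ is then controlled by $C(p)h^{p}$ times $1+|t_k|^{p/2}+\bE[|\tilde X^{i,N,M}_{t_0}|^{p(q+1)}\1_{\{k\le\lambda^i_m\}}]+\bE[|\tilde X^{i,N,M}_{t_k}|^{p(q+1)}\1_{\{k\le\lambda^i_m\}}]$, and both expectations are finite for $p\le 4$ because $X_0\in L^{4(q+1)}(\bR^d)$ and by Lemma \ref{Lem:Moment Bound for Stopped X}. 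This is the paper's route and it requires no unstopped moment bounds beyond those already established.
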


\begin{proof}
	To show the first part we start by noting the following useful relation between \eqref{Eq:implicitScheme} and \eqref{Eq:ForwardBackward Scheme}, namely for $1 \le k \le M$,
	\begin{align*}
		\hat{X}_{t_{k}}^{i,N,M}
		-
		\tilde{X}_{t_{k}}^{i,N,M}
		=
		\big(
		b(t_{0},\tilde{X}_{t_{0}}^{i,N,M}, \tilde{\mu}_{t_{0}}^{X,N,M})
		-
		b(t_{k-1},\tilde{X}_{t_{k}}^{i,N,M}, \tilde{\mu}_{t_{k-1}}^{X,N,M}) 
		\big) h\, .
	\end{align*}
	Noting that one can bound
	\begin{align*}
		| b(t_{0},\tilde{X}_{t_{0}}^{i,N,M}, \tilde{\mu}_{t_{0}}^{X,N,M})
		-
		b(t_{k-1},\tilde{X}_{t_{k}}^{i,N,M}, \tilde{\mu}_{t_{k-1}}^{X,N,M}) |
		\le
		C \big(1 + |t_{k}|^{1/2} 
		+ |\tilde{X}_{t_{0}}^{i,N,M}|^{q+1} 
		+ |\tilde{X}_{t_{k}}^{i,N,M}|^{q+1}
		\big) \, ,
	\end{align*}
	where we have used the polynomial growth, Hölder-continuity on the coefficient $b$ and in particular Assumption H\ref{Ass:Bounded Measure}. Hence,
	\begin{align*}
		& \bE \big[ |\hat{X}_{t_{k}}^{i,N,M} - \tilde{X}_{t_{k}}^{i,N,M}|^{p} 
		\1_{\{ k \le \lambda_{m}^{i}\}} \big]
		\\
		&
		\quad
		\le
		C(p)h^{p} \big(1 + |t_{k}|^{p/2} 
		+ \bE \big[ |\tilde{X}_{t_{0}}^{i,N,M}|^{p(q+1)} 
		\1_{\{ k \le \lambda_{m}^{i}\}} \big]
		+
		\bE \big[ |\tilde{X}_{t_{k}}^{i,N,M}|^{p(q+1)}
		\1_{\{ k \le \lambda_{m}^{i}\}} \big]
		\big)\, .
	\end{align*}
	One observes that the terms on the RHS are bounded by $C(p,m)$ for $p \le 4$ since $X_{0} \in L^{4(q+1)}( \bR^{d})$ and Lemma \ref{Lem:Moment Bound for Stopped X}. This  completes the first part of the proof.

	For the second part, recall from the relation between \eqref{Eq:implicitScheme} and \eqref{Eq:ForwardBackward Scheme}, one has,
	\begin{align*}
		\hat{X}_{t_{k}}^{i,N,M}
		&
		=
		b(t_{0},\tilde{X}_{t_{0}}^{i,N,M}, \tilde{\mu}_{t_{0}}^{X,N,M}) h
		+
		\tilde{X}_{t_{k}}^{i,N,M}
		-
		b(t_{k-1},\tilde{X}_{t_{k}}^{i,N,M}, \tilde{\mu}_{t_{k-1}}^{X,N,M}) h
		\\
		&
		=
		b(t_{0},\tilde{X}_{t_{0}}^{i,N,M}, \tilde{\mu}_{t_{0}}^{X,N,M}) h
		+
		F(t_{k-1},\tilde{X}_{t_{k}}^{i,N,M}, \tilde{\mu}_{t_{k-1}}^{X,N,M})
		\, .
	\end{align*}
	Using the reverse triangle inequality we obtain,
	\begin{align*}
	|\hat{X}_{t_{k}}^{i,N,M}|^{2}
	\ge
	-|b(t_{0},\tilde{X}_{t_{0}}^{i,N,M}, \tilde{\mu}_{t_{0}}^{X,N,M}) h|
	+
	|F(t_{k-1},\tilde{X}_{t_{k}}^{i,N,M}, \tilde{\mu}_{t_{k-1}}^{X,N,M})|
	\, .	
	\end{align*}
	The result follows from squaring both sides and applying the generalisation of Young's inequality, namely,
	\begin{align*}
	& |b(t_{0},\tilde{X}_{t_{0}}^{i,N,M}, \tilde{\mu}_{t_{0}}^{X,N,M}) h|
	|F(t_{k-1},\tilde{X}_{t_{k}}^{i,N,M}, \tilde{\mu}_{t_{k-1}}^{X,N,M})|
	\\
	&
	\le
	|b(t_{0},\tilde{X}_{t_{0}}^{i,N,M}, \tilde{\mu}_{t_{0}}^{X,N,M}) h|^{2}
	+\frac{1}{4}
	|F(t_{k-1},\tilde{X}_{t_{k}}^{i,N,M}, \tilde{\mu}_{t_{k-1}}^{X,N,M})|^{2}
	\, .	
	\end{align*}
\end{proof}

The next result we wish to present is that both schemes do not blow up in finite time, for this we define a new stopping time,
\begin{align*}
\eta_{m}^{i}
:=
\inf \big\{
t \ge 0: |\hat{X}_{t}^{i,N,M}| \ge m \, ,
~ ~
\text{or}
~ ~
|\tilde{X}_{\kappa(t)}^{i,N,M}| >m
\big\} \, .
\end{align*}
Note in particular that $\eta_{m}^{i}$ is smaller than or equal to $\lambda_{m}^{i}$ in \eqref{Eq:Lambda Stopping Time}.
\begin{lemma}
	\label{Lem:Bounded Stopping Time}
	Let Assumption \ref{Ass:Monotone Assumption}, \ref{Ass:Holder in Time} and  H\ref{Ass:Bounded Measure} (in Assumption \ref{Ass:Extra Implicit Bounds}) hold, fix $h^{*}< 1/\max(L_{b}, 2 \beta)$ and assume $X_{0} \in L^{4(q+1)}( \bR^{d})$. Then, for any $\epsilon >0$, there exists a $m^{*}$ such that, for any $m \ge m^{*}$ we can find a $h_{0}^{*}(m)$ (note the dependence on $m$) so that
	\begin{align*}
	\sup_{1 \le i \le N}
	\bP( \eta_{m}^{i} < T) \le \epsilon \, , 
	~ ~
	\text{for any } 0< h \le h_{0}^{*}(m).
	\end{align*}
\end{lemma}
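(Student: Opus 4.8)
The plan is to reduce the statement to a single moment estimate on the stopped forward--backward process, namely to constants $C_{0}$ (independent of $m,h,i$) and $C(m)$ (independent of $h,i$) such that
\begin{align}
\label{Eq:StopMomentBoundPlan}
\sup_{1\le i\le N}\sup_{0<h\le h^{*}}\bE\Big[\sup_{0\le t\le T}\big|\hat X^{i,N,M}_{t\wedge\eta_{m}^{i}}\big|^{2}\Big]\le C_{0}+C(m)\sqrt{h}.
\end{align}
Granting \eqref{Eq:StopMomentBoundPlan}, I would conclude as follows. Since $\hat X^{i,N,M}$ is continuous, on $\{\eta_{m}^{i}<T\}$ either the continuous process hits the barrier, $|\hat X^{i,N,M}_{\eta_{m}^{i}}|=m$, or $\eta_{m}^{i}=t_{k}$ for some $k$ with $|\tilde X^{i,N,M}_{t_{k}}|>m$; in the second case \eqref{Eq:ForwardBackward F Relation} together with \eqref{est:FixEquation} yields, once $h$ is small enough that $1-2h\beta\ge\tfrac12$,
\begin{align*}
\big|\hat X^{i,N,M}_{\eta_{m}^{i}}\big|^{2}
\ge\tfrac12\big((1-2h\beta)m^{2}-2h\alpha\big)-\big|b(t_{0},X_{0}^{i},\tilde\mu^{X,N,M}_{t_{0}})h\big|^{2}
\ge\tfrac14 m^{2}-h\alpha-\big|b(t_{0},X_{0}^{i},\tilde\mu^{X,N,M}_{t_{0}})\big|^{2}h^{2}.
\end{align*}
In both cases $\{\eta_{m}^{i}<T\}\subseteq\big\{\sup_{0\le t\le T}|\hat X^{i,N,M}_{t\wedge\eta_{m}^{i}}|^{2}+h\alpha+|b(t_{0},X_{0}^{i},\tilde\mu^{X,N,M}_{t_{0}})|^{2}h^{2}\ge\tfrac14 m^{2}\big\}$. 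Markov's inequality, \eqref{Eq:StopMomentBoundPlan}, $X_{0}\in L^{4(q+1)}(\bR^{d})$, the polynomial growth of $b$ and H\ref{Ass:Bounded Measure} (which give $\bE[|b(t_{0},X_{0}^{i},\tilde\mu^{X,N,M}_{t_{0}})|^{2}]<\infty$, uniformly in $N$ and $i$) then produce $\sup_{1\le i\le N}\bP(\eta_{m}^{i}<T)\le C_{1}m^{-2}+C(m)m^{-2}\sqrt{h}$ with $C_{1}$ independent of $m,h,i$. Given $\epsilon>0$, I first choose $m^{*}$ so that $C_{1}m^{-2}<\epsilon/2$ for all $m\ge m^{*}$, and then, for each such $m$, choose $h_{0}^{*}(m)\in(0,h^{*}]$ so that $C(m)m^{-2}\sqrt{h_{0}^{*}(m)}<\epsilon/2$; this is exactly the assertion.

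It remains to prove \eqref{Eq:StopMomentBoundPlan}. I would apply It\^o's formula to $|\hat X^{i,N,M}_{t\wedge\eta_{m}^{i}}|^{2}$ using the dynamics \eqref{Eq:ForwardBackward Scheme}. After stopping at $\eta_{m}^{i}$ the stochastic integral is a genuine martingale (its quadratic variation is integrable because $|\hat X^{i,N,M}_{s\wedge\eta_{m}^{i}}|\le m$ and the linear growth of $\sigma$, combined with the uniform bound $\bE[\tfrac1N\sum_{j=1}^{N}|\tilde X^{j,N,M}_{\kappa(s)}|^{2}]=\bE[|\tilde X^{i,N,M}_{\kappa(s)}|^{2}]\le C$ of Proposition~\ref{Prop:Bounded Implicit Second Moment}), so it disappears on taking expectations, and its supremum is handled by Burkholder--Davis--Gundy and absorbed into the left-hand side in the standard way. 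In the drift-plus-trace term I would write $\hat X^{i,N,M}_{s}=\tilde X^{i,N,M}_{\kappa(s)}+\big(\hat X^{i,N,M}_{s}-\tilde X^{i,N,M}_{\kappa(s)}\big)$ and realign the time/measure arguments of $b$ to $(\kappa(s),\tilde\mu^{X,N,M}_{\kappa(s)})$, so that the aligned part is $2\langle\tilde X^{i,N,M}_{\kappa(s)},b(\kappa(s),\tilde X^{i,N,M}_{\kappa(s)},\tilde\mu^{X,N,M}_{\kappa(s)})\rangle+\sum_{a=1}^{l}|\sigma_{a}(\kappa(s),\tilde X^{i,N,M}_{\kappa(s)},\tilde\mu^{X,N,M}_{\kappa(s)})|^{2}\le2\alpha+2\beta|\tilde X^{i,N,M}_{\kappa(s)}|^{2}$ by Remark~\ref{rem:MonotoneGrowth}; its integral then has expectation $\le2\alpha T+2\beta\int_{0}^{T}\bE[|\tilde X^{i,N,M}_{\kappa(s)}|^{2}]\,\dd s\le C_{0}$, uniformly in $m$, by Proposition~\ref{Prop:Bounded Implicit Second Moment}. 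The remaining mismatch terms are $2\langle\hat X^{i,N,M}_{s}-\tilde X^{i,N,M}_{\kappa(s)},b((\kappa(s)-h)\vee0,\tilde X^{i,N,M}_{\kappa(s)},\tilde\mu^{X,N,M}_{(\kappa(s)-h)\vee0})\rangle$ and $2\langle\tilde X^{i,N,M}_{\kappa(s)},b((\kappa(s)-h)\vee0,\cdot,\tilde\mu^{X,N,M}_{(\kappa(s)-h)\vee0})-b(\kappa(s),\cdot,\tilde\mu^{X,N,M}_{\kappa(s)})\rangle$; on $\{s\le\eta_{m}^{i}\}$ one has $|\tilde X^{i,N,M}_{\kappa(s)}|\le m$, so each occurrence of $b$ at the $i$-th particle is bounded by $C(m)$ apart from the linear-growth measure term whose $L^{2}$-norm is uniformly bounded, and Lemma~\ref{Lem:Discrete and Continuous Implicit} together with the elementary bound $\bE[|\hat X^{i,N,M}_{s}-\hat X^{i,N,M}_{\kappa(s)}|^{2}\1_{\{s\le\eta_{m}^{i}\}}]\le C(m)h$ for the continuous one-step increment makes the first mismatch term contribute only $C(m)\sqrt{h}$. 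For the second, the difference of drifts has the same spatial argument, so $1/2$-H\"older-in-time (Assumption~\ref{Ass:Holder in Time}) and $W^{(2)}$-Lipschitz-in-measure (Assumption~\ref{Ass:Monotone Assumption}) bound it by $C(h^{1/2}+W^{(2)}(\tilde\mu^{X,N,M}_{(\kappa(s)-h)\vee0},\tilde\mu^{X,N,M}_{\kappa(s)}))$, and Young's inequality together with $\bE[W^{(2)}(\tilde\mu^{X,N,M}_{(\kappa(s)-h)\vee0},\tilde\mu^{X,N,M}_{\kappa(s)})^{2}]\le\bE[|\tilde X^{i,N,M}_{\kappa(s)}-\tilde X^{i,N,M}_{(\kappa(s)-h)\vee0}|^{2}]\le C$ (exchangeability of the particles and Proposition~\ref{Prop:Bounded Implicit Second Moment}) keeps its contribution $m$-independent. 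Finally, replacing $|\tilde X^{i,N,M}_{\kappa(s)}|^{2}$ by $2|\hat X^{i,N,M}_{\kappa(s)}|^{2}+2|\hat X^{i,N,M}_{\kappa(s)}-\tilde X^{i,N,M}_{\kappa(s)}|^{2}$ (the latter $O_{m}(h^{2})$ by Lemma~\ref{Lem:Discrete and Continuous Implicit}) and invoking Gr\"onwall's lemma gives \eqref{Eq:StopMomentBoundPlan}.

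The main obstacle is precisely the bookkeeping in \eqref{Eq:StopMomentBoundPlan}: every appearance of the super-linearly growing drift must be paired both with the localising indicator $\1_{\{s\le\eta_{m}^{i}\}}$ — which converts it into an $m$-dependent constant — and with a strictly positive power of $h$, so that it contributes a term of the form $C(m)\sqrt{h}$ rather than something uncontrolled, while the genuinely measure-coupled quantities, for which localising particle $i$ gives no help on the other particles, must be made to appear only through the linear-growth diffusion coefficient and through Wasserstein one-step increments, whose expectations are controlled uniformly by the second-moment bound of Proposition~\ref{Prop:Bounded Implicit Second Moment} and by exchangeability. Keeping the post-Gr\"onwall "main" constant $C_{0}$ independent of $m$ is what ultimately makes the iterated choice of $m$ (large) and then $h$ (small) succeed.
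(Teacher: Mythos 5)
Your proposal is correct and follows essentially the same strategy as the paper's proof: an It\^o-plus-localization estimate of the form $\bE[|\hat X^{i,N,M}_{\cdot\wedge\eta_m^i}|^2]\le C_0+C(m)h^{1/2}$ with the leading constant independent of $m$, a lower bound $|\hat X^{i,N,M}_{\eta_m^i}|^2\gtrsim m^2$ on the blow-up event obtained from \eqref{Eq:ForwardBackward F Relation} and \eqref{est:FixEquation}, and then Chebyshev's inequality followed by choosing $m$ large first and $h$ small second. The only (harmless) deviations are organizational: you invoke Proposition \ref{Prop:Bounded Implicit Second Moment} to bound $\int_0^T\bE[|\tilde X^{i,N,M}_{\kappa(s)}|^2]\,\dd s$ directly where the paper converts $|\tilde X^{i,N,M}_{\kappa(s)}|^2$ into $|\hat X^{i,N,M}_{s}|^2$ and applies Gr\"onwall, and you fold the event $\{\eta_m^i=0\}$ into the Markov inequality via $|\hat X^{i,N,M}_0|=|X_0^i|\ge m$ where the paper treats it separately through uniform integrability of $X_0$.
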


\begin{proof}
	Note due to the initial condition being random we must be careful with how we set $m$, we shall come back to this later. Let us start by applying It\^{o} to the stopped version of \eqref{Eq:ForwardBackward Scheme}, 
	\begin{align*}
	|\hat{X}^{i,N,M}_{T \wedge \eta_{m}^{i}}|^{2}
	= 
	&
	|X_{0}^{i}|^{2}
	+
	\int_{0}^{T \wedge \eta_{m}^{i}} 2 \langle \hat{X}^{i,N,M}_{s}, 
	b \left( (\kappa(s)-h) \vee 0, \tilde{X}_{\kappa(s)}^{i,N,M}, \tilde{\mu}_{(\kappa(s) - h) \vee 0}^{X,N,M} \right)
	\rangle
	\\
	&
	+
	\sum_{a=1}^{l}
	|\sigma_{a} \left( \kappa(s), \tilde{X}_{\kappa(s)}^{i,N,M}, \tilde{\mu}_{\kappa(s)}^{i,N,M} \right)|^{2} \dd s
+
	\int_{0}^{T \wedge \eta_{m}^{i}} 
	2 \langle \hat{X}^{i,N,M}_{s},
	\sigma \left( \kappa(s), \tilde{X}_{\kappa(s)}^{i,N,M}, \tilde{\mu}_{\kappa(s)}^{i,N,M} \right) \dd W_{s}^{i}
	\rangle.
	\end{align*}
	We now look to bound the various integrands. Firstly one can observe
	\begin{align*}
	& \langle \hat{X}^{i,N,M}_{t}, 
	b \big( (\kappa(s)-h) \vee 0, \tilde{X}_{\kappa(s)}^{i,N,M}, \tilde{\mu}_{(\kappa(s) - h) \vee 0}^{X,N,M} \big)
	\rangle
	+
	\sum_{a=1}^{l}
	|\sigma_{a} \big( \kappa(s), \tilde{X}_{\kappa(s)}^{i,N,M}, \tilde{\mu}_{\kappa(s)}^{i,N,M} \big)|^{2}
	\\
	&
	=
	\langle \hat{X}^{i,N,M}_{t} - \tilde{X}_{\kappa(s)}^{i,N,M}, 
	b \big( (\kappa(s)-h) \vee 0, \tilde{X}_{\kappa(s)}^{i,N,M}, \tilde{\mu}_{(\kappa(s) - h) \vee 0}^{X,N,M} \big)
	\rangle 
	\\
	&
	~
	+
	\langle \tilde{X}_{\kappa(s)}^{i,N,M}, 
	b \big( (\kappa(s)-h) \vee 0, \tilde{X}_{\kappa(s)}^{i,N,M}, \tilde{\mu}_{(\kappa(s) - h) \vee 0}^{X,N,M} \big)
	\rangle
	+
	\sum_{a=1}^{l}
	|\sigma_{a} \big( \kappa(s), \tilde{X}_{\kappa(s)}^{i,N,M}, \tilde{\mu}_{\kappa(s)}^{i,N,M} \big)|^{2}
	\\
	&
	\le
	C |\hat{X}^{i,N,M}_{t} - \tilde{X}_{\kappa(s)}^{i,N,M}|(1 + |\tilde{X}_{\kappa(s)}^{i,N,M}|^{q+1})
	+
	2 \alpha + \beta |\tilde{X}_{\kappa(s)}^{i,N,M}|^{2} \, ,
	\end{align*}
	where we used Cauchy-Schwarz, polynomial growth bound, Hölder-continuity and monotone growth to obtain the final inequality. 
	
	Taking expectations and noting that due to the stopping time the stochastic integral is square integrable and hence a martingale, we obtain,
	\begin{align*}
	& \bE[ |\hat{X}^{i,N,M}_{T \wedge \eta_{m}^{i}}|^{2} ]
\le
	\bE[ |X_{0}^{i}|^{2}]
	+
	\bE \Big[ 
	\int_{0}^{T \wedge \eta_{m}^{i}}
	\hspace*{-0.075cm}
	C |\hat{X}^{i,N,M}_{s} - \tilde{X}_{\kappa(s)}^{i,N,M}|(1 + |\tilde{X}_{\kappa(s)}^{i,N,M}|^{q+1})
	+ 
	2 \alpha + \beta |\tilde{X}_{\kappa(s)}^{i,N,M}|^{2}
	\dd s
	\Big] .
	\end{align*}
	To proceed we note the following,
	$|\tilde{X}_{\kappa(s)}^{i,N,M}|^{2}
	\le
	2(
	|\tilde{X}_{\kappa(s)}^{i,N,M}- \hat{X}_{s}^{i,N,M}|^{2}
	+
	|\hat{X}_{s}^{i,N,M}|^{2} )$
 	and also that
	\begin{align*}
	\int_{0}^{T \wedge \eta_{m}^{i}}
	|\hat{X}_{s}^{i,N,M} - \tilde{X}_{\kappa(s)}^{i,N,M}|^{2}
	\dd s
	\le
	C(m)
	\int_{0}^{T \wedge \eta_{m}^{i}}
	|\hat{X}_{s}^{i,N,M} - \tilde{X}_{\kappa(s)}^{i,N,M}|
	\dd s \, ,
	\end{align*}
	where we used the fact that the stopping time ensures $\tilde{X}$ and $\hat{X}$ are $\le m$ for $s < \eta_{m}^{i}$ and $s=\eta_{m}^{i}$ has measure zero. The same reasoning also implies,
	\begin{align*}
	\int_{0}^{T \wedge \eta_{m}^{i}}
	C |\hat{X}^{i,N,M}_{s} - \tilde{X}_{\kappa(s)}^{i,N,M}|&(1 + |\tilde{X}_{\kappa(s)}^{i,N,M}|^{q+1})
	\dd s
	\le
	C(m)
	\int_{0}^{T \wedge \eta_{m}^{i}}
	|\hat{X}_{s}^{i,N,M} - \tilde{X}_{\kappa(s)}^{i,N,M}|
	\dd s .
	\end{align*}
	Hence the following result holds,
	\begin{align*}
	&\bE[ |\hat{X}^{i,N,M}_{T \wedge \eta_{m}^{i}}|^{2} ]
	\le
	\bE[ |X_{0}^{i}|^{2}]
	+
	C
	\bE \Big[ 
	\int_{0}^{T \wedge \eta_{m}^{i}}
	C(m) |\hat{X}^{i,N,M}_{s} - \tilde{X}_{\kappa(s)}^{i,N,M}|
	+
	1 + \beta |\hat{X}_{s}^{i,N,M}|^{2}
	\dd s
	\Big] \, .
	\end{align*}
	The next step is of course to take the expectation inside the integral. Let us start by noting the difference term can be bounded as
	\begin{align*}
	\bE \Big[
	\int_{0}^{T \wedge \eta_{m}^{i}}
	|\hat{X}_{s}^{i,N,M}  - \tilde{X}_{\kappa(s)}^{i,N,M}|
	\dd s
	\Big]
	&
	\le
	\bE \Big[
	\int_{0}^{T \wedge \eta_{m}^{i}}
	|\hat{X}_{s}^{i,N,M} - \hat{X}_{\kappa(s)}^{i,N,M}|
	\dd s
	+
	\int_{0}^{T \wedge \eta_{m}^{i}}
	|\hat{X}_{\kappa(s)}^{i,N,M} - \tilde{X}_{\kappa(s)}^{i,N,M}|
	\dd s
	\Big]
	\\
	&
	\le
	\bE \Big[
	h
	\int_{0}^{T \wedge \eta_{m}^{i}}
	|b \left( (\kappa(s)-h) \vee 0, \tilde{X}_{\kappa(s)}^{i,N,M}, \tilde{\mu}_{(\kappa(s) - h) \vee 0}^{X,N,M} \right)|
	\dd s
	\Big]
	\\
	&
	~
	+
	\bE \Big[
	\int_{0}^{T \wedge \eta_{m}^{i}}
	|\sigma \left( \kappa(s), \tilde{X}_{\kappa(s)}^{i,N,M}, \tilde{\mu}_{\kappa(s)}^{i,N,M} \right) (W_{s}^{i}- W_{\kappa(s)}^{i}) |
	\dd s
	\Big]
	+
	C(m)h,		
	\end{align*}
	where we have used Lemma \ref{Lem:Discrete and Continuous Implicit} for the final inequality. For the other terms, one can note due to the growth assumptions on $b$ and Lemma \ref{Lem:Moment Bound for Stopped X}, that
	\begin{align*}
	\bE \Big[
	h
	\int_{0}^{T \wedge \eta_{m}^{i}}
	|b \left( (\kappa(s)-h) \vee 0, \tilde{X}_{\kappa(s)}^{i,N,M}, \tilde{\mu}_{(\kappa(s) - h) \vee 0}^{X,N,M} \right)|
	\dd s
	\Big]
	\le
	 C(m) \,h .
	\end{align*}
	The term involving $\sigma$ is more complex. However, we can bound it as follows:
	\begin{align*}
	& \bE \Big[
	\int_{0}^{T \wedge \eta_{m}^{i}}
	|\sigma \left( \kappa(s), \tilde{X}_{\kappa(s)}^{i,N,M}, \tilde{\mu}_{\kappa(s)}^{i,N,M} \right) (W_{s}^{i}- W_{\kappa(s)}^{i}) |
	\dd s
	\Big]
	\\
	&
	\le
	C
	\int_{0}^{T}
	\sum_{a=1}^{l}
	\bE \Big[
	|\sigma_{a} \left( \kappa(s), \tilde{X}_{\kappa(s)}^{i,N,M}, \tilde{\mu}_{\kappa(s)}^{i,N,M} \right)| | (W_{s}^{i}- W_{\kappa(s)}^{i})_{a} |
	\1_{ \{\kappa(s) \le t_{\lambda_{m}^{i}} \}}
	\Big]
	\dd s
	\\
	&
	\le
	C
	\int_{0}^{T}
	\sum_{a=1}^{l}
	h^{1/2}
	(1 + \bE[ | \tilde{X}_{\kappa(s) \wedge t_{\lambda_{m}^{i}}}|^{2}])
	\dd s
	\le C(m) h^{1/2}.	
	\end{align*}
	Further, since $|\hat{X}_{s}^{i,N,M}|\ge 0$, we obtain
	\begin{align*}
	\bE \Big[ 
	\int_{0}^{T \wedge \eta_{m}^{i}}
	|\hat{X}_{s}^{i,N,M}|^{2}
	\dd s
	\Big]
	\le
	\int_{0}^{T}
	\bE \big[
	|\hat{X}_{s  \wedge \eta_{m}^{i}}^{i,N,M}|^{2}
	\big]
	\dd s .
	\end{align*}
	Hence,
	\begin{align}
	\bE[ |\hat{X}^{i,N,M}_{T \wedge \eta_{m}^{i}}|^{2} ]
	\le
	&
	\bE[ |X_{0}^{i}|^{2}]
	+
	C(m) h^{1/2}
	+
	C 
	\int_{0}^{T}
	1 + \beta \bE \big[
	|\hat{X}_{s  \wedge \eta_{m}^{i}}^{i,N,M}|^{2}
	\big]
	\dd s
	\notag
	\\
	\le
	&
	\big( \bE[ |X_{0}^{i}|^{2}]
	+
	C 
	+
	C(m)h^{1/2} \big)
	\exp(C \beta T)
	\label{Eq:Bound on Stopped X hat}
	\, ,
	\end{align}
	where the final inequality follows from Gr\"onwall.
	
	In order to obtain an upper bound on the probability of the stopping time occurring we look to obtain a lower bound for \eqref{Eq:ForwardBackward Scheme} at the stopping time. For the moment let us take $\vert X_{0}^{i} \vert < m$, hence $\eta_{m}^{i}>0$. There are now two possible ways the stopping time can be reached: if $\hat{X}$ hits the boundary first, then we have $|\hat{X}_{\eta_{m}^{i}}^{i,N,M}| =m$ and if $\tilde{X}$ hits the boundary first we have  $|\tilde{X}_{\eta_{m}^{i}}^{i,N,M}| > m$.
	
	In the case that $\hat{X}$ hits the boundary first, the lower bound is obvious, namely $|\hat{X}_{\eta_{m}^{i}}^{i,N,M}| =m$. For the second case it is less obvious. Recalling \eqref{Eq:ForwardBackward F Relation} and \eqref{est:FixEquation} we obtain  lower bound
	\begin{align*}
	|\hat{X}_{t_{k}}^{i,N,M}|^{2} 
	\ge
	\frac{1}{2} \big((1-2h \beta)|\tilde{X}_{t_{k}}^{i,N,M}|^{2} -2h \alpha \big)
	-
	|b(t_{0},\tilde{X}_{t_{0}}^{i,N,M}, \tilde{\mu}_{t_{0}}^{X,N,M}) h|^{2} \, ,
	\end{align*}
	where again we are taking $k \ge 1$ here, but this is not a problem since we are assuming for the moment $\vert X_{0}^{i} \vert <m$. Observing that this lower bound holds independently of which process triggers the stopping condition we have on $\big\{ \vert \tilde{X}_{\eta_{m}^{i}}^{i,N,M} \vert > m \big\}$ that
	\begin{align*}
	m^{2}
	&\ge
	|\hat{X}_{\eta_{m}^{i}}^{i,N,M}|^{2} 
	\1_{\{ |X_{0}^{i}|<m\}}
	\ge
	\frac{1}{2} \big((1-2h \beta)m^{2} -2h \alpha \big)
	\1_{\{ |X_{0}^{i}|<m\}}
	-
	|b(t_{0},\tilde{X}_{t_{0}}^{i,N,M}, \tilde{\mu}_{t_{0}}^{X,N,M}) h|^{2}
	\1_{\{ |X_{0}^{i}|<m\}}
	\, .
	\end{align*}
	Thus, for constants $C_1,C_2 >0$,
	\begin{align*}
	|\hat{X}_{\eta_{m}^{i}}^{i,N,M}|^{2} 
	\1_{\{ |X_{0}^{i}|<m\}}
	\ge
	(C_{1} m^{2} -C_{2}h)
	\1_{\{ |X_{0}^{i}|<m\}}
	-
	C(m)h^{2}
	\1_{\{ |X_{0}^{i}|<m\}}
	\, ,
	\end{align*}
	where  $|b(t_{0},\tilde{X}_{t_{0}}^{i,N,M}, \tilde{\mu}_{t_{0}}^{X,N,M})|\1_{\{ |X_{0}^{i}|<m\}} \le C(m) \1_{\{ |X_{0}^{i}|<m \} }$ via the growth condition on $b$. Let us now combine these results to obtain an upper bound for the probability of the stopping time. Notice that
	\begin{align*}
	\bE[ |\hat{X}_{T \wedge \eta_{m}^{i}}^{i,N,M}|^{2}]
	& \ge
	\bE[ |X_{0}^{i}|^{2} \1_{\{|X_{0}^{i}| \ge m \}}]
	+
	\bE[ |\hat{X}_{\eta_{m}^{i}}^{i,N,M}|^{2}
	\1_{ \{ \vert X^i_0 \vert < m \} }
	\1_{\{0< \eta_{m}^{i} <T \}}]
	\\
	&
	\ge
	\bP(\eta_{m}^{i}=0)
	+
	\left(
	(C_{1} m^{2} -C_{2}h)
	-
	C(m)h^{2}
	\right)
	\bP(\{ |X_{0}^{i}|<m\} \cap \{0< \eta_{m}^{i} <T \} )
	\, .
	\end{align*}
	Leaving the second term for the moment, and noting that $X_{0}^{i}$ is uniformly integrable, then for any $\epsilon >0$ there exists an $m^*>0$ such that for all $m \geq m^*$
	\begin{align*}
	\bP( \eta_{m}^{i}=0)
	\le m \bP(|X_{0}^{i}| \ge m)
	\le \bE[ |X_{0}^{i}| \1_{\{ |X_{0}^{i}| \ge m \}}]
	\le \frac{\epsilon}{3} \, .
	\end{align*}
	It is also useful to note that 
	$$\bP(\{ |X_{0}^{i}|<m\} \cap \{0< \eta_{m}^{i} <T \} ) = \bP(\{0< \eta_{m}^{i} <T \} ).$$ 
	From our previous analysis it is clear that for $m$ large enough and some contant $C(m)$, by using \eqref{Eq:Bound on Stopped X hat}, the probability can be bounded by
	\begin{align*}
	\bP(0< \eta_{m}^{i} <T)
	&\le
	\frac{ 
		\bE[ |\hat{X}_{T \wedge \eta_{m}^{i}}^{i,N,M}|^{2}] }
	{ (C_{1} m^{2} -C_{2}h
		-
		C(m)h^{2}) }
	\le
	\frac{ 
		\big( \bE[ |X_{0}^{i}|^{2}]
		+
		C 
		+
		C(m)h^{1/2} \big)
		\exp(C \beta T) }
	{ C_{1} m^{2} -C_{2}h
		-
		C(m)h^{2} }
	\, .
	\end{align*}
	Now the goal is to bound this by $2 \epsilon/3$. We already have taken $m$ sufficiently large to obtain the last inequality. Now consider for any given $m$ a factor $h_{01}^{*}(m)$ such that \linebreak
	$C_{2} h_{01}^{*}(m) + C(m)h_{01}^{*}(m)^{2} \le 1$. It is clear for $0 < h < h_{01}^{*}(m)$ the same bound holds. Then for the same $\epsilon$ as before choose $m$ large enough such that,
	\begin{align*}
	\frac{ 
		\big( \bE[ |X_{0}^{i}|^{2}]
		+
		C  \big)
		\exp(C \beta T) }
	{ C_{1} m^{2} -1 }
	\le
	\frac{\epsilon}{3}
	\, .
	\end{align*}
	Redefine $m^{*}$ as the corresponding maximum of this $m$ and $m^{*}$. Now for any $m \ge m^{*}$, define $h_{02}^{*}(m)$ such that,
	\begin{align*}
	\frac{ 
		C(m)(h_{02}^{*})^{1/2} 
		\exp(C \beta T) }
	{ C_{1} m^{2} -1 }
	\le
	\frac{\epsilon}{3} \, .
	\end{align*}
	Again for $0 < h < h_{02}^{*}(m)$ the above inequality holds. Hence for any $m \ge m^{*}$ and any \linebreak $0 < h < \min ( h_{01}^{*}(m), h_{02}^{*}(m))$, we have,
	$
	\bP( \eta_{m}^{i}<T)
	\le
	\bP( \eta_{m}^{i}=0)
	+
	\bP( 0<\eta_{m}^{i}<T)
	\le
	\epsilon.
	$
\end{proof}

We now look towards showing our strong convergence result, firstly by showing convergence between \eqref{Eq:ForwardBackward Scheme} and \eqref{Eq:MV-SDE Propagation} and then \eqref{Eq:implicitScheme} and \eqref{Eq:MV-SDE Propagation}. From this point onwards we require H\ref{Ass:Sigma Indepedent of Measure} (in Assumption~\ref{Ass:Extra Implicit Bounds}).
%
%
%
Recalling the stopping time in Proposition \ref{Prop:Particle System Bounds}, we now define $\theta_{m}^{i} := \tau_{m}^{i} \wedge \eta_{m}^{i}$ and have the following convergence result.
\begin{lemma}
	\label{Lem:Strong ForwardBackward Particle Conv}
	Let Assumption \ref{Ass:Monotone Assumption}, \ref{Ass:Holder in Time}, the full Assumption  \ref{Ass:Extra Implicit Bounds} hold, fix $h^{*}< 1/\max(L_{b}, 2 \beta)$ and assume $X_{0} \in L^{4(q+1)}( \bR^{d})$. Then, for all $h \in ( 0, h^*)$,
	\begin{align*}
	\sup_{1 \le i \le N}
	\bE[ \sup_{0 \le t \le T} |\hat{X}_{t \wedge \theta_{m}^{i}}^{i,N,M} - X_{t \wedge \theta_{m}^{i}}^{i,N}|^{2}]
	\le
	C(m)h 
	+
	C\bE\big[\1_{\{ T > \theta_{m}^{i} \}} \big]^{1/2}\, .
	\end{align*}
\end{lemma}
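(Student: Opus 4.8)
The plan is to estimate the difference $\hat X^{i,N,M}_{t\wedge\theta_m^i}-X^{i,N}_{t\wedge\theta_m^i}$ via It\^o's formula applied to its squared norm, following the standard one-sided-Lipschitz strategy but carrying the stopping time $\theta_m^i=\tau_m^i\wedge\eta_m^i$ throughout. First I would write, for $t\in[0,T]$,
\begin{align*}
|\hat X^{i,N,M}_{t\wedge\theta_m^i}-X^{i,N}_{t\wedge\theta_m^i}|^2
&=\int_0^{t\wedge\theta_m^i}2\langle \hat X^{i,N,M}_s-X^{i,N}_s,\ b((\kappa(s)-h)\vee0,\tilde X^{i,N,M}_{\kappa(s)},\tilde\mu^{X,N,M}_{(\kappa(s)-h)\vee0})-b(s,X^{i,N}_s,\mu^{X,N}_s)\rangle\,\dd s\\
&\quad+\int_0^{t\wedge\theta_m^i}\sum_{a=1}^l|\sigma_a(\kappa(s),\tilde X^{i,N,M}_{\kappa(s)},\tilde\mu^{i,N,M}_{\kappa(s)})-\sigma_a(s,X^{i,N}_s,\mu^{X,N}_s)|^2\,\dd s\\
&\quad+\int_0^{t\wedge\theta_m^i}2\langle \hat X^{i,N,M}_s-X^{i,N}_s,\ (\sigma(\kappa(s),\tilde X^{i,N,M}_{\kappa(s)},\tilde\mu^{i,N,M}_{\kappa(s)})-\sigma(s,X^{i,N}_s,\mu^{X,N}_s))\,\dd W^i_s\rangle.
\end{align*}
Then I would split the drift increment into a telescoping chain: (i) from $(s,X^{i,N}_s,\mu^{X,N}_s)$ to $(s,X^{i,N}_s,\mu^{X,N,M}_s)$ handled by the $W^{(1)}$-Lipschitz bound in H\ref{Ass:Sigma Indepedent of Measure}; (ii) to $(s,\hat X^{i,N,M}_s,\mu^{X,N,M}_s)$ — but this is where one must be careful, since one-sided Lipschitz only helps when the drift argument is the same as the process in the inner product; so instead I would first replace $\hat X$ by $\tilde X_{\kappa(s)}$ inside the one-sided bracket, writing $\langle \hat X^{i,N,M}_s-X^{i,N}_s,\,b(\dots,\tilde X^{i,N,M}_{\kappa(s)},\dots)-b(s,X^{i,N}_s,\dots)\rangle = \langle \tilde X^{i,N,M}_{\kappa(s)}-X^{i,N}_s,\,b(\dots)-b(\dots)\rangle+\langle \hat X^{i,N,M}_s-\tilde X^{i,N,M}_{\kappa(s)},\,b(\dots)-b(\dots)\rangle$, use the one-sided Lipschitz (in $x$) plus the polynomial-growth bound on the first term and bound the second using Lemma \ref{Lem:Discrete and Continuous Implicit} together with the polynomial growth of $b$ (controlled because $s<\theta_m^i\le\lambda_m^i$ keeps $\tilde X_{\kappa(s)}$ bounded by $m$); and (iii) the time shift $(\kappa(s)-h)\vee0\to s$ absorbed by the $1/2$-H\"older Assumption \ref{Ass:Holder in Time}.

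For the diffusion terms I would similarly telescope $\sigma(\kappa(s),\tilde X^{i,N,M}_{\kappa(s)},\tilde\mu^{i,N,M}_{\kappa(s)})-\sigma(s,X^{i,N}_s,\mu^{X,N}_s)$ through time-shift (H\"older), measure (now using the $W^{(1)}$ bound of H\ref{Ass:Sigma Indepedent of Measure}, which is why that hypothesis is needed from here on), and space: $|\sigma(\kappa(s),\tilde X_{\kappa(s)},\cdot)-\sigma(\kappa(s),X^{i,N}_s,\cdot)|\le L_\sigma(|\tilde X_{\kappa(s)}-\hat X_s|+|\hat X_s-X^{i,N}_s|)$, the first piece again controlled by Lemma \ref{Lem:Discrete and Continuous Implicit}. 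The stochastic integral is handled by Burkholder--Davis--Gundy and Young's inequality in the usual way: $\bE[\sup_{u\le t}|\int_0^{u\wedge\theta_m^i}2\langle\cdots\rangle\,\dd W^i_s|]\le\tfrac12\bE[\sup_{u\le t}|\hat X^{i,N,M}_{u\wedge\theta_m^i}-X^{i,N}_{u\wedge\theta_m^i}|^2]+C\,\bE[\int_0^{t\wedge\theta_m^i}\sum_a|\sigma_a-\sigma_a|^2\,\dd s]$, the first term absorbed into the left side. One also needs the empirical-measure bound $W^{(2)}(\mu^{X,N,M}_s,\mu^{X,N}_s)^2\le\frac1N\sum_j|\hat X^{j,N,M}_s-X^{j,N}_s|^2$ but since we only stop particle $i$, the off-diagonal terms $j\ne i$ are not themselves stopped; to handle this I would observe that $W^{(1)}$ (hence the relevant measure increments for both $b$ and $\sigma$ under H\ref{Ass:Sigma Indepedent of Measure}) satisfies $W^{(1)}(\mu^{X,N,M}_s,\mu^{X,N}_s)\le\frac1N\sum_j|\hat X^{j,N,M}_s-X^{j,N}_s|$, and then combine with a symmetry/exchangeability argument — all $j$ give identically distributed contributions — so that $\sup_j\bE[\cdots]$ closes the estimate. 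After collecting terms I would arrive at
\begin{align*}
\bE\big[\sup_{0\le u\le t}|\hat X^{i,N,M}_{u\wedge\theta_m^i}-X^{i,N}_{u\wedge\theta_m^i}|^2\big]
\le C(m)h+C\int_0^t\sup_{1\le j\le N}\bE\big[\sup_{0\le u\le s}|\hat X^{j,N,M}_{u\wedge\theta_m^j}-X^{j,N}_{u\wedge\theta_m^j}|^2\big]\,\dd s + R_m,
\end{align*}
with a remainder $R_m$ coming from the event $\{T>\theta_m^i\}$ where our pointwise bounds on the stopped process and on the polynomial terms fail; using Cauchy--Schwarz together with the uniform second/higher-moment bounds (Proposition \ref{Prop:Particle System Bounds}, Proposition \ref{Prop:Bounded Implicit Second Moment} and the moment control on $\hat X$) one gets $R_m\le C\,\bE[\1_{\{T>\theta_m^i\}}]^{1/2}$. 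Gr\"onwall's inequality then delivers the claimed bound.

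The main obstacle, I expect, is the handling of the measure-dependence together with the single-particle stopping time: unlike the explicit-scheme proof, here only particle $i$ is stopped, so the empirical-measure error term mixes stopped and unstopped particles, and one must carefully argue via exchangeability and the uniform moment bounds that this does not spoil the $C(m)h$ rate — and this is precisely why hypothesis H\ref{Ass:Sigma Indepedent of Measure} (measure-Lipschitzness in $W^{(1)}$ rather than $W^{(2)}$) is invoked, since it lets one pass from the empirical measure to a plain average of first moments rather than a square root of averaged squares, keeping the Gr\"onwall argument linear. A secondary technical point is controlling, on $\{T>\theta_m^i\}$, the polynomial-growth contributions; this forces the assumption $X_0\in L^{4(q+1)}(\bR^d)$ and the use of Lemma \ref{Lem:Moment Bound for Stopped X}.
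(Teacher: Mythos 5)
Your overall architecture — It\^o's formula applied to the squared difference, a telescoping decomposition of the drift and diffusion increments, Burkholder--Davis--Gundy plus Young for the stochastic integral, and a stopped Gr\"onwall argument — matches the paper's. Your detour around the one-sided Lipschitz term is unnecessary, however: the paper telescopes the drift so that the first piece pairs $X^{i,N}_s-\hat X^{i,N,M}_s$ with $b(s,X^{i,N}_s,\mu^{X,N}_s)-b(s,\hat X^{i,N,M}_s,\mu^{X,N}_s)$, to which one-sided Lipschitz applies directly; re-splitting the bracket's left argument through $\tilde X_{\kappa(s)}$ instead produces $L_b|\tilde X_{\kappa(s)}-X^{i,N}_s|^2$ rather than the Gr\"onwall iterate and only adds bookkeeping.

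The genuine gap is in the empirical-measure term. You rightly note that particles $j\neq i$ are not stopped by $\theta_m^i$, but your proposed fix — bound by $W^{(1)}\le\frac1N\sum_j|\cdot|$ and then ``close the estimate'' by exchangeability — does not close it. Exchangeability converts the empirical sum into an unstopped moment, whereas the Gr\"onwall iterate is the stopped quantity $\bE\big[\sup_{u\le s}|\hat X^{i,N,M}_{u\wedge\theta^i_m}-X^{i,N}_{u\wedge\theta^i_m}|^2\big]$. To bridge them you must peel off the tail $\{s>\theta^j_m\}$; but if you square the $W^{(1)}$ bound first, Cauchy--Schwarz on the tail $\bE[|X^{j,N}_s-\tilde X^{j,N,M}_{\overline{\kappa}(s)}|^2\1_{\{s>\theta^j_m\}}]$ requires a uniform \emph{fourth} moment of the unstopped difference, which Propositions \ref{Prop:Particle System Bounds} and \ref{Prop:Bounded Implicit Second Moment} do not provide — they supply only second moments of $X^{i,N}$ and $\tilde X^{i,N,M}$. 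The paper's fix, and what your sketch misses, is to split the random sum \emph{before} squaring: use H\ref{Ass:Bounded Measure} (together with the growth conditions and the stopping) plus H\ref{Ass:Sigma Indepedent of Measure} to write the measure increment as $C\wedge C\,W^{(1)}(\mu^{X,N}_s,\tilde\mu^{X,N,M}_{\overline{\kappa}(s)})$, decompose $W^{(1)}\le\frac1N\sum_j|\cdot|\1_{\{s\le\theta^j_m\}}+\frac1N\sum_j|\cdot|\1_{\{s>\theta^j_m\}}$, and exploit $\min(a,b+c)\le\min(a,b)+\min(a,c)$ together with $\min(a,b)\le\sqrt{ab}$ to arrive at
\begin{equation*}
\big(1\wedge W^{(1)}(\mu^{X,N}_s,\tilde\mu^{X,N,M}_{\overline{\kappa}(s)})\big)^2
\le \frac{2}{N}\sum_{j=1}^N\big|X^{j,N}_s-\tilde X^{j,N,M}_{\overline{\kappa}(s)}\big|^2\,\1_{\{s\le\theta^j_m\}}
+\frac{2}{N}\sum_{j=1}^N\big|X^{j,N}_s-\tilde X^{j,N,M}_{\overline{\kappa}(s)}\big|\,\1_{\{s>\theta^j_m\}}\, ,
\end{equation*}
where the tail carries only a \emph{first} power of the difference. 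After exchangeability, Cauchy--Schwarz on the tail then needs only the available second moments and yields the advertised $C\,\bE[\1_{\{T>\theta_m^i\}}]^{1/2}$ remainder. This $\min$--$\sqrt{ab}$ truncation, rather than ``keeping the Gr\"onwall argument linear'' as you surmise, is the reason the $W^{(1)}$ form of H\ref{Ass:Sigma Indepedent of Measure} is imposed, and without it your stopped estimate does not close.
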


\begin{proof}
	For ease of presentation we denote by $\overline{\kappa}(s):= (\kappa(s)-h) \vee 0$. 
	As is standard we start by applying It\^{o} to the difference to obtain
	\begin{align*}
		|X_{t \wedge \theta_{m}^{i}}^{i,N} - \hat{X}_{t \wedge \theta_{m}^{i}}^{i,N,M} |^{2}
		&
		=
		\int_{0}^{t \wedge \theta_{m}^{i}}
		2 \langle
		X_{s}^{i,N} - \hat{X}_{s}^{i,N,M} 
		,
		b(s, X_{s}^{i,N},\mu_{s}^{X,N}) - b(\overline{\kappa}(s), \tilde{X}_{\kappa(s)}^{i,N,M}, \tilde{\mu}_{\overline{\kappa}(s)}^{X,N,M}) \rangle
		\\
		&
		\qquad\qquad + \sum_{a=1}^{l}
		|\sigma_a(s, X_{s}^{i,N},\mu_{s}^{X,N}) - \sigma_a(\kappa(s), \tilde{X}_{\kappa(s)}^{i,N,M}, \tilde{\mu}_{\kappa(s)}^{X,N,M}) |^{2} \dd s
		\\
		&
		\quad + 
		\int_{0}^{t \wedge \theta_{m}^{i}}
		2 \langle
		X_{s}^{i,N} - 
		\hat{X}_{s}^{i,N,M}
		,
		\big(\sigma(s, X_{s}^{i,N},\mu_{s}^{X,N}) - \sigma(\kappa(s), \tilde{X}_{\kappa(s)}^{i,N,M}, \tilde{\mu}_{\kappa(s)}^{X,N,M}) \big) \dd W_{s}^{i} \rangle \, .
	\end{align*}
	By writing out the drift term we have that
	\begin{align*}
		& \langle
		X_{s}^{i,N} - \hat{X}_{s}^{i,N,M} 
		,
		b(s, X_{s}^{i,N},\mu_{s}^{X,N}) - b(\overline{\kappa}(s), \tilde{X}_{\kappa(s)}^{i,N,M}, \tilde{\mu}_{\overline{\kappa}(s)}^{X,N,M}) \rangle
		\\
		&
		=
		\langle
		X_{s}^{i,N} - \hat{X}_{s}^{i,N,M} 
		,
		b(s, X_{s}^{i,N},\mu_{s}^{X,N}) - b(s, \hat{X}_{s}^{i,N,M}, \mu_{s}^{X,N}) \rangle
		\\
		&
		~	+
		\langle
		X_{s}^{i,N} - \hat{X}_{s}^{i,N,M} 
		,
		b(s, \hat{X}_{s}^{i,N,M}, \mu_{s}^{X,N}) - b(\overline{\kappa}(s), \hat{X}_{s}^{i,N,M}, \mu_{s}^{X,N}) \rangle
		\\
		&
		~	+
		\langle
		X_{s}^{i,N} - \hat{X}_{s}^{i,N,M} 
		,
		b(\overline{\kappa}(s), \hat{X}_{s}^{i,N,M}, \mu_{s}^{X,N})
		-
		b(\overline{\kappa}(s), \hat{X}_{\kappa(s)}^{i,N,M}, \mu_{s}^{X,N}) \rangle
		\\
		&
		~	+
		\langle
		X_{s}^{i,N} - \hat{X}_{s}^{i,N,M} 
		,
		b(\overline{\kappa}(s), \hat{X}_{\kappa(s)}^{i,N,M}, \mu_{s}^{X,N})
		-
		b(\overline{\kappa}(s), \tilde{X}_{\kappa(s)}^{i,N,M}, \mu_{s}^{X,N}) \rangle
		\\
		&
		~	+
		\langle
		X_{s}^{i,N} - \hat{X}_{s}^{i,N,M} 
		,
		b(\overline{\kappa}(s), \tilde{X}_{\kappa(s)}^{i,N,M}, \mu_{s}^{X,N})
		-
		b(\overline{\kappa}(s), \tilde{X}_{\kappa(s)}^{i,N,M}, \tilde{\mu}_{\overline{\kappa}(s)}^{X,N,M}) \rangle
		\\
		&
		\le
		C \Big( 
		|X_{s}^{i,N} - \hat{X}_{s}^{i,N,M}|^{2} 
		+ h
		+ \big(C \wedge CW^{(1)}(\mu_{s}^{X,N}, \tilde{\mu}_{\overline{\kappa}(s)}^{X,N,M}) \big)^{2}
		\\
		& \qquad 
		+
		(1+ |\hat{X}_{s}^{i,N,M}|^{2q} + |\hat{X}_{\kappa(s)}^{i,N,M}|^{2q})
		|\hat{X}_{s}^{i,N,M} - \hat{X}_{\kappa(s)}^{i,N,M}|^{2}
		\\
		&
		\qquad \qquad 
		+
		(1+ |\hat{X}_{\kappa(s)}^{i,N,M}|^{2q} + |\tilde{X}_{\kappa(s)}^{i,N,M}|^{2q})
		|\hat{X}_{\kappa(s)}^{i,N,M} - \tilde{X}_{\kappa(s)}^{i,N,M}|^{2} \Big)\, ,
	\end{align*}
	where we have used the growth bounds on $b$ along with several applications of Cauchy-Schwarz and Young's inequality. In particular we have used the fact that $b$ is both globally and $W^{(1)}$ bounded in measure to obtain the $ C \wedge CW^{(1)}(\mu_{s}^{X,N}, \tilde{\mu}_{\overline{\kappa}(s)}^{X,N,M})$ bound.
	Using similar arguments to earlier proofs and to the drift term above, we get the following bound for the diffusion
	\begin{align*}
		&	|\sigma_{a}(s, X_{s}^{i,N}, \mu_{s}^{X,N}) - \sigma_{a}(\kappa(s), \tilde{X}_{\kappa(s)}^{i,N,M}, \tilde{\mu}_{\kappa(s)}^{X,N,M}) |
		\\
		&\qquad 
		\le
		C \big( h^{1/2} + |X_{s}^{i,N}- \hat{X}_{s}^{i,N,M}| + |\hat{X}_{s}^{i,N,M} - \hat{X}_{\kappa(s)}^{i,N,M}| + |\hat{X}_{\kappa(s)}^{i,N,M} - \tilde{X}_{\kappa(s)}^{i,N,M}| + 1 \wedge W^{(1)}(\mu_{s}^{X,N}, \tilde{\mu}_{\kappa(s)}^{X,N,M}) \big) .
	\end{align*}
	Ultimately we need to take supremum and expected values, hence we wish to bound
	\begin{align*}
		\bE \Big[
		\sup_{0 \le r \le t \wedge \theta_{m}^{i}}
		\int_{0}^{r}
		2 \langle
		X_{s}^{i,N} - 
		\hat{X}_{s}^{i,N,M}
		,
		\big(\sigma(s, X_{s}^{i,N}, \mu_{s}^{X,N}) - \sigma(\kappa(s), \tilde{X}_{\kappa(s)}^{i,N,M}, \tilde{\mu}_{\kappa(s)}^{X,N,M}) \big) \dd W_{s}^{i} \rangle
		\Big] \, .
	\end{align*}
	We use Burkholder Davis Gundy inequality, however care is needed since the terminal time is a stopping time. It turns out the usual upper bound still holds (see for example \cite[pg. 226]{Protter2005}), hence we obtain by using Young's inequality
	\begin{align*}
		&
		\bE \Big[
		\sup_{0 \le r \le t \wedge \theta_{m}^{i}}
		\int_{0}^{r}
		2 \langle
		X_{s}^{i,N} - 
		\hat{X}_{s}^{i,N,M}
		,
		\big(\sigma(s, X_{s}^{i,N}, \mu_{s}^{X,N}) - \sigma(\kappa(s), \tilde{X}_{\kappa(s)}^{i,N,M}, \tilde{\mu}_{\kappa(s)}^{X,N,M}) \big) \dd W_{s}^{i} \rangle
		\Big] 
		\\
		&
		\le
		C
		\bE \Big[
		\Big(
		\int_{0}^{t \wedge \theta_{m}^{i}}
		|
		X_{s}^{i,N} - 
		\hat{X}_{s}^{i,N,M}|^{2}
		\sum_{a=1}^{l} |\sigma_a(s, X_{s}^{i,N}, \mu_{s}^{X,N}) - \sigma_a(\kappa(s), \tilde{X}_{\kappa(s)}^{i,N,M}, \tilde{\mu}_{\kappa(s)}^{X,N,M})|^{2}  \dd s
		\Big)^{1/2}
		\Big] 
		\\
		&
		\le
		\frac{1}{2}
		\bE \Big[
		\sup_{0 \le s \le t \wedge \theta_{m}^{i}}
		|
		X_{s}^{i,N} - 
		\hat{X}_{s}^{i,N,M}|^{2}
		\Big] 
		+
		C
		\bE \Big[
		\int_{0}^{t \wedge \theta_{m}^{i}}
		\sum_{a=1}^{l} |\sigma_a(s, X_{s}^{i,N}, \mu_{s}^{X,N}) - \sigma_a(\kappa(s), \tilde{X}_{\kappa(s)}^{i,N,M}, \tilde{\mu}_{\kappa(s)}^{X,N,M})|^{2}  \dd s
		\Big] 
		.
	\end{align*}
	Taking supremum over time and expectations of our original difference and using these bounds we obtain the inequality
	\begin{align*}
		& 
		\frac{1}{2}
		\bE \left[ \sup_{0 \le t \le T} |X_{t \wedge \theta_{m}^{i}}^{i,N} - \hat{X}_{t \wedge \theta_{m}^{i}}^{i,N,M} |^{2}
		\right]
		\\
		&
		\le
		\bE \Big[
		\int_{0}^{T \wedge \theta_{m}^{i}}
		C \Big( 
		|X_{s}^{i,N} - \hat{X}_{s}^{i,N,M}|^{2} 
		+ 
		\big(1 \wedge W^{(1)}(\mu_{s}^{X,N},\tilde{\mu}_{\overline{\kappa}(s)}^{X,N,M}) \big)^{2}
		\\
		& 
		+
		(1+ |\hat{X}_{s}^{i,N,M}|^{2q} + |\hat{X}_{\kappa(s)}^{i,N,M}|^{2q})
		|\hat{X}_{s}^{i,N,M} - \hat{X}_{\kappa(s)}^{i,N,M}|^{2}
		+ h
		+
		(1+ |\hat{X}_{\kappa(s)}^{i,N,M}|^{2q} + |\tilde{X}_{\kappa(s)}^{i,N,M}|^{2q})
		|\hat{X}_{\kappa(s)}^{i,N,M} - \tilde{X}_{\kappa(s)}^{i,N,M}|^{2}
		\Big)
		\\
		& 
		+ C \sum_{a=1}^{l} \left(
		h + |X_{s}^{i,N}- \hat{X}_{s}^{i,N,M}|^{2} + |\hat{X}_{s}^{i,N,M} - \hat{X}_{\kappa(s)}^{i,N,M}|^{2} + |\hat{X}_{\kappa(s)}^{i,N,M} - \tilde{X}_{\kappa(s)}^{i,N,M}|^{2} 
		+
		\big(1 \wedge W^{(1)}(\mu_{s}^{X,N},\tilde{\mu}_{\kappa(s)}^{X,N,M}) \big)^{2}		
		\right) \dd s
		\Big]
		\, .
	\end{align*}

	Let us now concentrate on the measure terms
	$ 1 \wedge W^{(1)}(\mu_{s}^{X,N},\tilde{\mu}_{\overline{\kappa}(s)}^{X,N,M})
	$
	and
	$ 1 \wedge W^{(1)}(\mu_{s}^{X,N},\tilde{\mu}_{\kappa(s)}^{X,N,M}).
	$
	 The goal in the end is to use a Gr\"onwall type inequality. Hence, we want to obtain terms of a similar form. The standard argument in this case is to remove the average sum of other particles using the fact that they are identically distributed, unfortunately the presence of the stopping time breaks this argument and forces us to argue a different way. We start by noting the following bound
	\begin{align*}
	W^{(1)}(\mu_{s}^{X,N}, \tilde{\mu}_{\overline{\kappa}(s)}^{X,N,M})
	\le &
	\frac{1}{N}\sum_{j=1}^{N}|X_{s}^{j,N}-\tilde{X}_{\overline{\kappa}(s)}^{j,N,M}| \1_{\{s \le \theta_{m}^{j}\}}
	+
	\frac{1}{N}\sum_{j=1}^{N}|X_{s}^{j,N}-\tilde{X}_{\overline{\kappa}(s)}^{j,N,M}| \1_{\{s > \theta_{m}^{j}\}}.
	\end{align*}
	By using the fact that for $a$, $b$, $c>0$, $\min(a,b+c) \le \min(a,b)+\min(a,c)$ and $\min(a,b) \le \sqrt{a}\sqrt{b}$ alongside H\"older inequality for sums, we obtain
	\begin{align*}
	1 \wedge W^{(1)}(\mu_{s}^{X,N}, \tilde{\mu}_{\overline{\kappa}(s)}^{X,N,M})
	\le &
	\sqrt{\frac{1}{N}\sum_{j=1}^{N}|X_{s}^{j,N}-\tilde{X}_{\overline{\kappa}(s)}^{j,N,M}|^{2} \1_{\{s \le \theta_{m}^{j}\}}}
	+
	\sqrt{\frac{1}{N}\sum_{j=1}^{N}|X_{s}^{j,N}-\tilde{X}_{\overline{\kappa}(s)}^{j,N,M}| \1_{\{s > \theta_{m}^{j}\}}} \, .
	\end{align*}
	Let us further define $\hat{\mu}_{s}^{X,N,M} := \frac{1}{N}\sum_{j=1}^{N} \delta_{\hat{X}_{s}^{j,N,M}}$. Then using the triangle inequality we get
	\begin{align*}
	\frac{1}{N}\sum_{j=1}^{N}|X_{s}^{j,N}-\tilde{X}_{\overline{\kappa}(s)}^{j,N,M}|^{2} \1_{\{s \le \theta_{m}^{j}\}}
	\le &
	\frac{C}{N}\sum_{j=1}^{N}|X_{s}^{j,N}-\hat{X}_{s}^{j,N,M}|^{2} \1_{\{s \le \theta_{m}^{j}\}} 
	+
	\frac{C}{N}\sum_{j=1}^{N}|\hat{X}_{s}^{j,N,M}-\hat{X}_{\kappa(s)}^{j,N,M}|^{2} \1_{\{s \le \theta_{m}^{j}\}}
	\\
	&
	+
	\frac{C}{N}\sum_{j=1}^{N}|\hat{X}_{\kappa(s)}^{j,N,M}-\hat{X}_{\overline{\kappa}(s)}^{j,N,M}|^{2} \1_{\{s \le \theta_{m}^{j}\}}
	+
	\frac{C}{N}\sum_{j=1}^{N}|\hat{X}_{\overline{\kappa}(s)}^{j,N,M}-\tilde{X}_{\overline{\kappa}(s)}^{j,N,M}|^{2} \1_{\{s \le \theta_{m}^{j}\}}
	\, .
	\end{align*}
	Hence, we can bound the measure terms by
	\begin{align*}
	& \bE \bigg[
	\int_{0}^{T \wedge \theta_{m}^{i}}
	\big(1 \wedge W^{(1)}(\mu_{s}^{X,N},\tilde{\mu}_{\overline{\kappa}(s)}^{X,N,M}) \big)^{2}
	 \dd s
	\bigg]
	\\
	 & \le
	\bE \bigg[
	\int_{0}^{T}
	\big(1 \wedge W^{(1)}(\mu_{s}^{X,N},\tilde{\mu}_{\overline{\kappa}(s)}^{X,N,M}) \big)^{2}
	\dd s
	\bigg]
	\\
	& \le
	\bE \bigg[
	\int_{0}^{T}
	\frac{C}{N} 
	\bigg( \sum_{j=1}^{N}|X_{s}^{j,N}-\hat{X}_{s}^{j,N,M}|^{2} \1_{\{s \le \theta_{m}^{j}\}} 
	+
	\sum_{j=1}^{N}|\hat{X}_{s}^{j,N,M}-\hat{X}_{\kappa(s)}^{j,N,M}|^{2} \1_{\{s \le \theta_{m}^{j}\}}
	\\
	&
	~ ~ ~
	+
	\sum_{j=1}^{N}|\hat{X}_{\kappa(s)}^{j,N,M}-\hat{X}_{\overline{\kappa}(s)}^{j,N,M}|^{2} \1_{\{s \le \theta_{m}^{j}\}}
	+
	\sum_{j=1}^{N}|\hat{X}_{\overline{\kappa}(s)}^{j,N,M}-\tilde{X}_{\overline{\kappa}(s)}^{j,N,M}|^{2} \1_{\{s \le \theta_{m}^{j}\}}
	+
	\sum_{j=1}^{N}|X_{s}^{j,N}-\tilde{X}_{\overline{\kappa}(s)}^{j,N,M}| \1_{\{s > \theta_{m}^{j}\}}
	\dd s
	\bigg) \bigg]
	\, 
	\end{align*}
	and likewise also
	\begin{align*}
	& \bE \bigg[
	\int_{0}^{T \wedge \theta_{m}^{i}}
	\big(1 \wedge W^{(1)}(\mu_{s}^{X,N},\tilde{\mu}_{\kappa(s)}^{X,N,M}) \big)^{2}
	 \dd s
	\bigg]
	\\
	&
	\qquad \le
	\bE \bigg[
	\int_{0}^{T}
	\frac{C}{N} 
	\bigg( \sum_{j=1}^{N}|X_{s}^{j,N}-\hat{X}_{s}^{j,N,M}|^{2} \1_{\{s \le \theta_{m}^{j}\}} 
	+
	\sum_{j=1}^{N}|\hat{X}_{s}^{j,N,M}-\hat{X}_{\kappa(s)}^{j,N,M}|^{2} \1_{\{s \le \theta_{m}^{j}\}}
	\\
	&
	\qquad \qquad
	+
	\sum_{j=1}^{N}|\hat{X}_{\kappa(s)}^{j,N,M}-\tilde{X}_{\kappa(s)}^{j,N,M}|^{2} \1_{\{s \le \theta_{m}^{j}\}}
	+
	\sum_{j=1}^{N}|X_{s}^{j,N}-\tilde{X}_{\kappa(s)}^{j,N,M}| \1_{\{s > \theta_{m}^{j}\}}
	\dd s
	\bigg) \bigg]
	\, .
	\end{align*}
	
	Therefore, taking the expectation inside the integral and supremum over the particle index; noting particles are identically distributed, we obtain
	\begin{align*}
		\hspace*{2cm} & \hspace*{-2cm} 
		\sup_{1 \le i \le N} \bE \left[ \sup_{0 \le t \le T \wedge \theta_{m}^{i}} |X_{t \wedge \theta_{m}^{i}}^{i,N} - \hat{X}_{t \wedge \theta_{m}^{i}}^{i,N,M} |^{2}
		\right]
		\\
		\le &
		C \Big( hT
		+
		\int_{0}^{T }
		\sup_{1 \le i \le N} \bE \Big[
		\sup_{0 \le r \le s}
		|X_{r\wedge \theta_{m}^{i}}^{i,N} - \hat{X}_{r\wedge \theta_{m}^{i}}^{i,N,M}|^{2} 
		\Big] 
		+ 
		\sup_{1 \le i \le N}
		\bE \Big[
		|\hat{X}_{\kappa(s)}^{i,N,M}-\hat{X}_{\overline{\kappa}(s)}^{i,N,M}|^{2} \1_{\{s \le \theta_{m}^{i}\}}
		\Big]
		\\
		&
		\quad
		+ 
		\sup_{1 \le i \le N}
		\bE \Big[
		|\hat{X}_{\overline{\kappa}(s)}^{i,N,M}-\tilde{X}_{\overline{\kappa}(s)}^{i,N,M}|^{2} \1_{\{s \le \theta_{m}^{i}\}}
		\Big]
		\\
		&
		\quad
		+
		\sup_{1\le i \le N} \bE \Big[
		|X_{s}^{i,N}-\tilde{X}_{\overline{\kappa}(s)}^{i,N,M}| \1_{\{s > \theta_{m}^{i}\}}
		\Big]
		+
		\sup_{1\le i \le N} \bE \Big[
		|X_{s}^{i,N}-\tilde{X}_{\kappa(s)}^{i,N,M}| \1_{\{s > \theta_{m}^{i}\}}
		\Big]
		\\
		&
		\quad 
		+
		\sup_{1 \le i \le N}
		\bE \Big[
		(1+ |\hat{X}_{s}^{i,N,M}|^{2q} + |\hat{X}_{\kappa(s)}^{i,N,M}|^{2q})
		|\hat{X}_{s}^{i,N,M} - \hat{X}_{\kappa(s)}^{i,N,M}|^{2}
		\1_{\{s \le \theta_{m}^{i}\}}
		\Big]
		\\
		&
		\quad 
		+
		\sup_{1 \le i \le N}
		\bE \Big[
		(1+ |\hat{X}_{\kappa(s)}^{i,N,M}|^{2q} + |\tilde{X}_{\kappa(s)}^{i,N,M}|^{2q})
		|\hat{X}_{\kappa(s)}^{i,N,M} - \tilde{X}_{\kappa(s)}^{i,N,M}|^{2}
		\1_{\{s \le \theta_{m}^{i}\}}
		\Big]
		\dd s
		\Big)
		\, ,
	\end{align*}
	where we have further used that if $Y_{\cdot}\ge 0$, then $Y_{\cdot}\1_{\{\cdot \le t\}} \le Y_{\{\cdot \wedge t\}}$.
	Noting $\1_{\{\cdot\}}= \1_{\{ \cdot\}}^{2}$, we obtain via Cauchy-Schwarz inequality
	\begin{align*}
		\bE \Big[
		(1+ |\hat{X}_{s}^{i,N,M}|^{2q} + |\hat{X}_{\kappa(s)}^{i,N,M}|^{2q})
		|\hat{X}_{s}^{i,N,M} &- \hat{X}_{\kappa(s)}^{i,N,M}|^{2}
		\1_{\{s \le \theta_{m}^{i}\}}
		\Big]
		\le
		C(m)
		\bE \Big[
		|\hat{X}_{s}^{i,N,M} - \hat{X}_{\kappa(s)}^{i,N,M}|^{4}
		\1_{\{s \le \theta_{m}^{i}\}}
		\Big]^{1/2}.
	\end{align*}
	Noting that
	\begin{align*}
		&|\hat{X}_{s}^{i,N,M} - \hat{X}_{\kappa(s)}^{i,N,M}|
		\le
		|b \left( \overline{\kappa}(s), \tilde{X}_{\kappa(s)}^{i,N,M}, \tilde{\mu}_{(\kappa(s) - h) \vee 0}^{X,N,M} \right)|h
		+
		|\sigma \left( \kappa(s), \tilde{X}_{\kappa(s)}^{i,N,M}, \tilde{\mu}_{\kappa(s)}^{X,N,M}) \right) (W_{s}^{i} - W_{\kappa(s)}^{i}) | \, ,
	\end{align*}
	which implies
	\begin{align*}
		\hspace*{0.5cm} & \hspace*{-0.5cm}
		\bE \Big[
		|\hat{X}_{s}^{i,N,M} - \hat{X}_{\kappa(s)}^{i,N,M}|^{4}
		\1_{\{s \le \theta_{m}^{i}\}}
		\Big]
		\\
		&
		\le
		Ch^{4} \bE \Big [(1 + | \tilde{X}_{\kappa(s)}^{i,N,M}|^{4(q+1)}) \1_{\{s \le \theta_{m}^{i}\}}  \Big] 
		+
		C \bE \Big [(1 + | \tilde{X}_{\kappa(s)}^{i,N,M}|^{4}) \1_{\{s \le \theta_{m}^{i}\}}  \Big] 
		\bE \Big [(W_{s}^{i} - W_{\kappa(s)}^{i})^{4}  \Big] 
		\le
		C(m)h^{2} \, ,
	\end{align*}
	where we used Lemma \ref{Lem:Moment Bound for Stopped X} to obtain the final inequality (note by assumption $X_{0} \in L^{4(q+1)}(\bR^{d})$).
	Arguing in the exact same fashion along with Lemma \ref{Lem:Discrete and Continuous Implicit} also yields
	\begin{align*}
		\bE \Big[
		(1+ |\hat{X}_{\kappa(s)}^{i,N,M}|^{2q} + |\tilde{X}_{\kappa(s)}^{i,N,M}|^{2q})
		|\hat{X}_{\kappa(s)}^{i,N,M} - \tilde{X}_{\kappa(s)}^{i,N,M}|^{2}
		\1_{\{s \le \theta_{m}^{i}\}}
		\Big]
		\le
		C(m)h \, .
	\end{align*}
	The remaining terms can be bounded using the same arguments as above. Substituting these bounds then implies
	\begin{align*}
		& \sup_{1 \le i \le N} \bE \Big[ \sup_{0 \le t \le T} |X_{t \wedge \theta_{m}^{i}}^{i,N} - \hat{X}_{t \wedge \theta_{m}^{i}}^{i,N,M} |^{2}
		\Big]
		\\ 
		&
		~ ~ \le
		C(m) h
		+
		C
		\int_{0}^{T }
		\sup_{1\le i \le N} \bE \Big[
		|X_{s}^{i,N}-\tilde{X}_{\overline{\kappa}(s)}^{i,N,M}| \1_{\{s > \theta_{m}^{i}\}}
		\Big]
		\dd s
		+
		C
		\int_{0}^{T }
		\sup_{1\le i \le N} \bE \Big[
		|X_{s}^{i,N}-\tilde{X}_{\kappa(s)}^{i,N,M}| \1_{\{s > \theta_{m}^{i}\}}
		\Big]
		\dd s
		\\
		& 
		\qquad
		+
		C
		\int_{0}^{T }
		\sup_{1 \le i \le N}\bE \Big[
		\sup_{0 \le r \le s}
		|X_{r\wedge \theta_{m}^{i}}^{i,N} - \hat{X}_{r\wedge \theta_{m}^{i}}^{i,N,M}|^{2} 
		\Big]
		\dd s
		\, .
	\end{align*}
	Hence, by Gr\"onwall's inequality we obtain,
	\begin{align*}
		&		
		\sup_{1 \le i \le N}\bE \Big[ \sup_{0 \le t \le T} |X_{t \wedge \theta_{m}^{i}}^{i,N} 
		- \hat{X}_{t \wedge \theta_{m}^{i}}^{i,N,M} |^{2}
		\Big]
		\\
		&
		\le
		C(m)h
		+
		C
		\int_{0}^{T }
		\sup_{1\le i \le N} \bE \Big[
		|X_{s}^{i,N}-\tilde{X}_{\overline{\kappa}(s)}^{i,N,M}| \1_{\{s > \theta_{m}^{i}\}}
		\Big]
		+
		\sup_{1\le i \le N} \bE \Big[
		|X_{s}^{i,N}-\tilde{X}_{\kappa(s)}^{i,N,M}| \1_{\{s > \theta_{m}^{i}\}}
		\Big]
		\dd s
		\, .
	\end{align*}
	We then complete the proof by applying Cauchy-Schwarz to the expectations in the integrand along with Propositions \ref{Prop:Particle System Bounds} and \ref{Prop:Bounded Implicit Second Moment}.
\end{proof}

We now can prove our main implicit scheme result.

\begin{proof}[Proof of Proposition \ref{Prop:Strong Implicit Scheme Convergence}] Recall that $s\in[1,2)$. Define the error term as 
	$E_{r}(T)^{i} = X_{T}^{i,N} - \tilde{X}_{T}^{i,N,M}$ 
	and also let us note a more general version of Young's inequality
	\begin{align*}
	x^{s} y \le \frac{\delta s}{2} x^{2} + \frac{2-s}{2 \delta^{s/(2-s)}} y^{2/(2-s)} \, , \quad
	\forall ~ x, 
	~ y, ~ \delta >0 \, .
	\end{align*}
	Hence,
	\begin{align*}
		\bE[ |X_{T}^{i,N} - \tilde{X}_{T}^{i,N,M}|^{s} ]
	&\le
	2^{s-1} \big( \bE[ |X_{T}^{i,N} - \hat{X}_{T}^{i,N,M}|^{s} \1_{\{ \tau_{m}^{i} > T, ~ \eta_{m}^{i} >T \}}] 
	+ \bE[ |\hat{X}_{T}^{i,N,M} - \tilde{X}_{T}^{i,N,M}|^{s} \1_{\{ \tau_{m}^{i} > T, ~ \eta_{m}^{i} >T \}}] \big)
	\\
	& \qquad
	+
	\frac{\delta s}{2} \bE[ |E_{r}(T)^{i}|^{2}] 
	+ \frac{2-s}{2 \delta^{s/(2-s)}} \bE[ \1_{\{ \tau_{m}^{i} \le T ~ \text{or} ~ \eta_{m}^{i} \le T \}}] \, .
	\end{align*}
	From Lemma \ref{Lem:Discrete and Continuous Implicit} we obtain
	\begin{align*}
	\bE[ |\hat{X}_{T}^{i,N,M} - \tilde{X}_{T}^{i,N,M}|^{s} \1_{\{ \tau_{m}^{i} > T, ~ \eta_{m}^{i} >T \}}]
	\le
	C(m,s) h^{s} \, .
	\end{align*}
	Also let us note,
	$
	\bE[ | E_{r}(T)^{i}|^{2}]
	\le
	2 \bE[ |X_{T}^{i,N}|^{2} + | \tilde{X}_{T}^{i,N,M}|^{2}]
	\le 
	2C \, , 
	$
	where we have used Propositions \ref{Prop:Particle System Bounds} and \ref{Prop:Bounded Implicit Second Moment}. Hence for any $\epsilon >0$, we can choose $ \delta$ such that,
	\begin{align*}
	\frac{\delta s}{2} \bE[ |E_{r}(T)^{i}|^{2}] 
	\le \frac{\epsilon}{3} \, .
	\end{align*} 
	By subadditivity of measures
	$\bE[ \1_{\{ \tau_{m}^{i} \le T ~ \text{or} ~ \eta_{m}^{i} \le T \}}]
	\le
	\bP( \tau_{m}^{i} \le T) + \bP(\eta_{m}^{i} \le T)$.
	By Proposition~\ref{Prop:Particle System Bounds}, there exists $m^{*}$ (dependent on $\delta$), such that for $m \ge m^{*}$
	\begin{align*}
	\frac{2-s}{2 \delta^{s/(2-s)}} \bP( \tau_{m}^{i} \le T )
	\le
	\frac{\epsilon}{3} \, .
	\end{align*}
	Then, noting by Lemma \ref{Lem:Strong ForwardBackward Particle Conv}
	\begin{align*}
	\bE[
	| \hat{X}_{T}^{i,N,M} 
	- X_{T}^{i,N} |^{s} \1_{\{ \tau_{m}^{i} > T, ~ \eta_{m}^{i} >T \}} ]
	\le
	\bE \Big[
	\sup_{0 \le t \le T} | \hat{X}_{t \wedge \theta_{m}^{i}}^{i,N,M} 
	- X_{t \wedge \theta_{m}^{i}}^{i,N} |^{2} \Big]^{s/2}
	\le
	C(m)h^{s/2} + C\bE\big[\1_{\{ T > \theta_{m}^{i} \}} \big]^{s/4} \, .
	\end{align*}
	Again by subadditivity of measures we can bound
	\begin{align*}
	\bE\big[\1_{\{ T > \theta_{m}^{i} \}} \big]^{s/4}
	\le
	\bP( \tau_{m}^{i} \le T)^{s/4} + \bP(\eta_{m}^{i} \le T)^{s/4} \, .
	\end{align*}
	By the same argument as before we can define a new $m^{*}$, greater than or equal to the previous such that $C\bP( \tau_{m}^{i} \le T)^{s/4}$ is sufficiently small. By Lemma \ref{Lem:Bounded Stopping Time}, by taking $h$ small enough for any $\tilde{\epsilon} > 0$, 
	$\bP( \eta_{m}^{i} <T) \le \tilde{\epsilon}$,
	and by extension, there exists an $h$ small enough such that $\bP( \eta_{m}^{i} <T)^{s/4} \le \tilde{\epsilon}$.
	Hence, for any $m$, we can take $h$ small enough such that
	\begin{align*}
	& 2^{s-1} \big( \bE[ |X_{T}^{i,N} - \hat{X}_{T}^{i,N,M}|^{s} \1_{\{ \tau_{m}^{i} > T, ~ \eta_{m}^{i} >T \}}] 
	\\
	&\qquad 
	+ \bE[ |\hat{X}_{T}^{i,N,M} - \tilde{X}_{T}^{i,N,M}|^{s} \1_{\{ \tau_{m}^{i} > T, ~ \eta_{m}^{i} >T \}}] \big)
	+
	\frac{2-s}{2 \delta^{s/(2-s)}} \bP( \eta_{m}^{i} \le T )
	\le
	\frac{\epsilon}{3} \,
	\end{align*}
	and hence $ \bE[ |X_{T}^{i,N} - \tilde{X}_{T}^{i,N,M}|^{s} ] \leq \epsilon$. Since $\epsilon > 0$ was arbitrary, we have the result.
\end{proof}

\paragraph{Funding}
G. dos Reis acknowledges support from the \emph{Funda{\c c}$\tilde{\text{a}}$o para a Ci$\hat{e}$ncia e a Tecnologia} (Portuguese Foundation for Science and Technology) through the project UIDB/00297/2020 (Centro de Matem\'atica e Aplica\c c$\tilde{\text{o}}$es CMA/FCT/UNL).

S.~Engelhardt was supported by German Exchange DAAD (Nr. 57369588) and acknowledges the hospitality of the University of Edinburgh. 

G. Smith was supported by The Maxwell Institute Graduate School in Analysis and its Applications, a Centre for Doctoral Training funded by the UK Engineering and Physical Sciences Research Council (grant EP/L016508/01), the Scottish Funding Council, the University of Edinburgh and Heriot-Watt University.


\bibliographystyle{IMANUM-BIB}
\bibliography{Taming-MVSDEs}  

\end{document}